\documentclass[11pt, twoside, reqno]{article}

\usepackage{amssymb}
\usepackage{amsmath}
\usepackage{mathrsfs}
\usepackage{amsthm}
\usepackage{amsfonts}
\usepackage{color}
\usepackage{latexsym}
\usepackage{txfonts}
\usepackage{indentfirst}

\allowdisplaybreaks

\pagestyle{myheadings}
\markboth{\footnotesize\rm\sc {Ziyi He, Fan Wang, Dachun Yang and Wen Yuan}}
{\footnotesize\rm\sc Wavelet Characterization of Besov and Triebel--Lizorkin spaces}

\textwidth=15cm
\textheight=21.29truecm
\oddsidemargin 0.46cm
\evensidemargin 0.46cm

\parindent=13pt

\def\red{\color{red}}

\def\rr{{\mathbb R}}
\def\rn{{\mathbb{R}^n}}

\def\nn{{\mathbb N}}
\def\zz{{\mathbb Z}}
\def\cc{{\mathbb C}}
\def\CG{{\mathcal G}}

\def\CS{{\mathcal S}}

\def\CM{{\mathcal M}}
\def\CA{{\mathcal M}}
\def\CX{{\mathcal X}}
\def\CY{{\mathcal Y}}

\def\CJ{{\mathcal J}}

\newcommand{\BMO}{\mathrm{BMO}}

\newcommand{\CD}{\mathcal D}
\newcommand{\CH}{\mathcal H}
\newcommand{\wD}{\wt{\mathcal D}}

\def\fz{\infty}
\def\az{\alpha}
\def\bz{\beta}
\def\dz{\delta}
\def\ez{\epsilon}
\def\kz{\kappa}
\def\thz{\theta}
\def\vz{\varphi}

\def\lf{\left}
\def\r{\right}
\def\ls{\lesssim}
\def\noz{\nonumber}
\def\wz{\widetilde}

\def\loc{{\mathrm{loc}}}

\def\XXint#1#2#3{{\setbox0=\hbox{$#1{#2#3}{\int}$ }
\vcenter{\hbox{$#2#3$ }}\kern-.6\wd0}}


\def\lz{{\lambda}}

\def\CG{{\mathcal G}}
\def\CA{{\mathcal A}}
\def\CS{{\mathcal S}}
\def\CY{\mathcal Y}

\def\gz{\gamma}

\def\RI{\mathrm I}
\newcommand{\RJ}{\mathrm J}


\DeclareMathOperator{\diam}{diam}

\DeclareMathOperator*{\esssup}{ess\ sup}

\newcommand{\wt}{\widetilde}


\newtheorem{theorem}{Theorem}[section]
\newtheorem{lemma}[theorem]{Lemma}

\newtheorem{proposition}[theorem]{Proposition}

\theoremstyle{definition}
\newtheorem{remark}[theorem]{Remark}
\newtheorem{definition}[theorem]{Definition}

\newtheorem{question}[theorem]{Question}
\renewcommand{\appendix}{\par
   \setcounter{section}{0}%
   \setcounter{subsection}{0}%
   \setcounter{subsubsection}{0}%
   \gdef\thesection{\@Alph\c@section}%
   \gdef\thesubsection{\@Alph\c@section.\@arabic\c@subsection}%
   \gdef\theHsection{\@Alph\c@section.}%
   \gdef\theHsubsection{\@Alph\c@section.\@arabic\c@subsection}%
   \csname appendixmore\endcsname
 }

\newcommand{\go}[1]{\CG_0^\eta(#1)}

\newcommand{\GO}[1]{\mathring{\CG}(#1)}
\newcommand{\GOO}[1]{\mathring{\CG}^\eta_0(#1)}

\numberwithin{equation}{section}

\begin{document}
\title{\bf\Large Wavelet Characterization of Besov and Triebel--Lizorkin Spaces on Spaces of Homogeneous
Type and Its Applications
\footnotetext{\hspace{-0.35cm} 2020 {\it Mathematics Subject Classification}. Primary 46E35;
Secondary 46E36, 46E39, 42B25, 30L99.\endgraf
{\it Key words and phrases.} space of homogeneous type, Besov space, Triebel--Lizorkin space,
almost diagonal operator, wavelet, molecule, Littlewood--Paley function.\endgraf
This project is partially supported by the National Key Research and Development Program of China
(Grant No.\ 2020YFA0712900) and the National
Natural Science Foundation of China (Grant Nos.
11971058, 12071197 and 11871100).}}
\author{Ziyi He, Fan Wang, Dachun Yang\footnote{Corresponding author,
E-mail: \texttt{dcyang@bnu.edu.cn}/{\red March 15, 2021}/Final version.} \ \ and Wen Yuan}
\date{}
\maketitle
	
\vspace{-0.7cm}

\begin{center}
\begin{minipage}{13cm}
{\small {\bf Abstract}\quad
In this article, the authors establish the wavelet characterization of Besov and Triebel--Lizorkin
spaces on a given space $(X,d,\mu)$ of homogeneous type in the sense of Coifman and Weiss. Moreover,
the authors introduce almost diagonal operators on Besov and Triebel--Lizorkin sequence spaces
on $X$, and obtain their boundedness. Using this wavelet characterization
and this boundedness of almost diagonal operators, the authors
obtain the molecular characterization of Besov and Triebel--Lizorkin spaces.
Applying this molecular characterization, the authors further establish
the Littlewood--Paley characterizations of Triebel--Lizorkin spaces on $X$. The main novelty
of this article is that all these results get rid of their dependence on the reverse doubling property
of $\mu$ and also the triangle inequality of $d$, by fully using the geometrical
property of $X$ expressed via its equipped quasi-metric $d$, dyadic reference points, dyadic cubes, and wavelets.}
\end{minipage}
\end{center}

\vspace{0.1cm}

\tableofcontents

\section{Introduction}\label{s1}

Besov and Triebel--Lizorkin spaces have a long history. In 1951, Nikol'sk\u{\i} \cite{nik51} introduced
some function spaces on the Euclidean space $\rn$, which are nowadays denoted by $B^s_{p,\fz}(\rn)$. Later,
Besov \cite{bes59,bes61} introduced the Besov space $B^s_{p,q}(\rn)$ for any given $q\in(0,\fz]$. On the other
hand, Lizorkin \cite{liz71,liz74} and Triebel \cite{tri73} introduced some new function spaces, which
are nowadays denoted by $F^s_{p,q}(\rn)$, for any given $s\in(0,\fz)$, $p\in(1,\fz)$, and $q\in(1,\fz]$. Later, Peetre
\cite{pee73,pee75,pee76} extended the ranges of $s$, $p$, and $q$ to all possible choices.
Frazier and Jawerth \cite{fj85,fj90} obtained the atomic and the molecular characterizations
of Besov and Triebel--Lizorkin spaces. Bownik \cite{bow05, bow07} introduced and developed Besov
and Triebel--Lizorkin spaces on anisotropic Euclidean spaces.
The related wavelet characterization of these spaces was given in \cite{mey90}.
For more information on Besov and Triebel--Lizorkin spaces on $\rn$, we refer the reader to the monographs
\cite{tri83,tri92,tri06,sa18}.

Now, we recall the notion of spaces of homogeneous type originally introduced by Coifman and Weiss \cite{cw71,cw77}. Let
$X$ be a non-empty set and $d$ a \emph{quasi-metric}, namely, a non-negative function on $X\times X$
satisfying the following conditions: for any $x,\ y,\ z\in X$,
\begin{enumerate}
\item $d(x,y)=0$ if and only if $x=y$;
\item $d(x,y)=d(y,x)$;
\item there exists a positive constant $A_0\in[1,\fz)$, independent of $x$, $y$, and $z$, such that
$$
d(x,z)\le A_0[d(x,y)+d(y,z)].
$$
\end{enumerate}
Then $(X,d)$ is called a \emph{quasi-metric space}.
A measure $\mu$ on $X$ is called a \emph{doubling measure} if any ball $B$ of $X$ is of finite measure,
namely, $\mu(B)\in(0,\fz)$, and $\mu$ satisfies the following \emph{doubling condition}:
there exists a positive constant $C_{(\mu)}$ such that, for any ball $B$, $\mu(2B)\le C_{(\mu)}\mu(B)$.
Here and thereafter,
for any $\tau\in(0,\fz)$ and any ball $B$, $\tau B$ denotes the ball of $X$ with the same
center as $B$ and $\tau$ times radius of $B$. Observe that the doubling condition implies
that, for any $\lz\in[1,\fz)$ and any ball $B$,
\begin{equation}\label{eq-doub}
\mu(\lz B)\le C_{(\mu)}\lz^\omega\mu(B),
\end{equation}
where $\omega:=\log_2 C_{(\mu)}$. The \emph{upper dimension $\omega_0$} of $X$ is defined by setting
\begin{align}\label{eq-updim}
\omega_0&:=\inf\{\omega\in(0,\fz):\ \textup{there exists a $C_{(\mu)}\in(0,\fz)$ such that \eqref{eq-doub} holds true}\noz\\
&\qquad\qquad\textup{for any ball $B$ and any $\lz\in(0,\fz)$}\}.
\end{align}
A triple $(X,d,\mu)$ is called a \emph{space of
homogeneous type} if $(X,d)$ is a quasi-metric space and $\mu$ a doubling measure on $X$.
If $A_0:=1$, then $(X,d,\mu)$ is called a \emph{metric measure space of
homogeneous type}, or simply, a \emph{doubling metric measure space}.
The spaces of homogeneous type have
proved a natural and general setting for the study of both function spaces and the boundedness of operators on them.

Throughout this article, we always make the following assumptions.
For any $x\in X$ and $r\in(0,\fz)$, $B(x,r)$ denotes the ball $B(x,r):=\{y\in X:\ d(y,x)<r\}$.
For any point $x\in X$, we assume that the balls $\{B(x,r)\}_{r\in(0,\fz)}$ form a basis of open neighborhoods
of $x$; assume that $\mu$ is \emph{Borel regular}
which means that open sets are measurable and every set
$A\subset X$ is contained in a Borel set $E$ satisfying that $\mu(A)=\mu(E)$. We also assume that
$\mu(B(x,r))\in (0,\fz)$ for any given $x\in X$ and $r\in(0,\fz)$. For the presentation concision, we always
assume that $(X, d,\mu)$ is nonatomic, namely, $\mu(\{x\})=0$ for any $x\in X$.

In 1977, Coifman and Weiss \cite{cw77} introduced the atomic Hardy spaces on spaces of homogeneous type,
and raised an \emph{open question} whether or not additional geometrical conditions are necessary to guarantee
the radial maximal function characterization of these atomic Hardy spaces. From then on, the real-variable theory of
function spaces on spaces of homogeneous type attracted a lot of attentions.

Indeed, the first progress was made on Ahlfors regular spaces which are special cases of spaces of homogeneous
type. Recall that a triplet $(X,d,\mu)$ is called an \emph{Ahlfors-$n$ regular space} if
there exists a constant $C\in[1,\fz)$ such that, for any $x\in X$ and
$r\in(0,\diam X)$, $C^{-1}r^n\le\mu(B(x,r))\le C r^n$. Here and thereafter, for any non-empty subset
$\Omega\subset X$, $\diam\Omega:=\sup\{d(x,y):\ x,\ y\in\Omega\}$. On an Ahlfors regular
space $(X,d,\mu)$  satisfying the additional assumption that there exist a $\thz\in(0,1]$ and a positive constant
$\wz C$ such that, for any $x,\ x',\ y\in X$,
$$
|d(x,y)-d(x',y)|\le\wz C[d(x,x')]^\thz[d(x,y)+d(x',y)]^{1-\thz},
$$
Marc\'{\i}as and Segovia \cite{ms79b} established the radial maximal function characterization of atomic
Hardy spaces introduced by Coifman and Weiss. In 1994, Han and Sawyer \cite{hs94} introduced homogeneous Besov and
Triebel--Lizorkin spaces over Ahlfors regular spaces. Later, Han and Yang \cite{hy02,hy03} introduced their inhomogeneous
counterparts. For more real-variable characterizations of function spaces on Ahlfors regular spaces, we refer the reader to
\cite{yang02,yang03a,yang04,yang05,yang05b,dh09,kyz11} and their references. In particular, Koskela et al. \cite{kyz11} considered the action of quasi-conformal mappings on Haj\l asz--Triebel--Lizorkin spaces
over Ahlfors regular spaces. Moreover, Alvarado and Mitrea \cite{am15} established a sharp
real-variable theory of Besov and Triebel--Lizorkin
spaces over Ahlfors regular spaces.
Recently, Jaiming and Negreira \cite{jn19} obtained a new Plancherel--P\^{o}lya
inequality for Besov spaces over Ahlfors regular spaces.

Ahlfors regular spaces were further generalized to RD-spaces which are also special cases of spaces of homogeneous
type and were originally introduced by Han et al. \cite{hmy08} (see also \cite{hmy06}).
Recall that an \emph{{\rm RD}-space} $(X,d,\mu)$ is a doubling metric measure space with the following additional \emph{reverse
doubling condition}: there exist constants $\wz C\in (0,1]$ and $\kz\in(0,\omega]$ such that, for any ball $B(x,r)$
with $x\in X$, $r\in(0, \diam X/2)$, and $\lz\in[1,\diam X/[2r])$, $\wz C\lz^\kz\mu(B(x,r))\le\mu(B(x,\lz r))$.
Obviously, an Ahlfors regular space is an RD-space, and a connected space of homogeneous type
is also an RD-space (see \cite{hmy08,yz11}, and also
\cite{yz11} for more equivalent characterizations of RD-spaces).
In 2008, Han et al. \cite{hmy08} established Calder\'{o}n reproducing formulae on RD-spaces and also introduced
Besov and Triebel--Lizorkin spaces on RD-spaces. With the help of Calder\'on reproducing formulae, a real-variable
theory of function spaces on RD-spaces has been rapidly developed;
see, for instance, \cite{hmy06,gly08,gly09x,yz08,yz10,zsy16}.
In particular, M\"uller and Yang \cite{my09} obtained the difference characterization of Triebel--Lizorkin spaces
on RD-spaces. Koskela et al. \cite{kyz10} established the grand Littlewood--Paley function characterization of
Triebel--Lizorkin spaces on RD-spaces. Moreover, Yang and Zhou \cite{yz11} characterized Besov and Triebel--Lizorkin
spaces via (local) Hardy spaces on RD-spaces.

As was mentioned above, Calder\'{o}n reproducing formulae take an important role in both real-variable
theory of function spaces and boundedness of operators. Thus, to develop a real-variable theory of function spaces
on spaces of homogeneous type, a key point is to establish corresponding Calder\'{o}n
reproducing formulae. Along this line, a breakthrough work was made by Auscher and Hyt\"{o}nen
\cite{ah13,ah15} in which a wavelet system
on a space $X$ of homogeneous type was constructed. The constructed wavelets in \cite{ah13,ah15} have the exponential
decay and the $\eta$-H\"{o}lder regularity with
some $\eta\in(0,1)$ (see \cite[Theorem 7.1]{ah13}). As a direct application, Fu and Yang \cite{fy18} obtained
the wavelet characterizations of atomic Hardy spaces on $X$ introduced by Coifman and Weiss \cite{cw77}.

As a first attempt of reproducing formulae on a given space $X$
of homogeneous type, Han et al. \cite{hlw18} established wavelet reproducing formulae which hold true in both test
functions and distributions. Soon after, Han et al. \cite{hhl16} characterized atomic Hardy spaces via wavelet coefficients. Another kind of Hardy spaces by
using different spaces of distributions was also introduced by Han et al. \cite{hhl17}.
Later, He et al. \cite{hhllyy19} introduced a new kind of (inhomogeneous) approximations of the identity with exponential decay
(for short, exp-(I)ATI) and
established new Calder\'on reproducing formulae via these new approximations of the identity on $X$. Using these
new Calder\'on reproducing formulae, He et al. \cite{hhllyy19} and \cite{hyy19} respectively established
the real-variable theory of (local) Hardy spaces  on $X$, in which
the question asked by Coifman and Weiss \cite{cw77} was completely answered, that is, no additional
geometrical property is necessary to guarantee the radial maximal function characterization of Hardy
spaces on $X$. Later, Fu et al. \cite{fmy20} introduced the Musielak--Orlicz Hardy spaces on $X$ and
established their various real-variable characterizations. Recently, Zhou et al. \cite{zhy20} obtained a
real-variable theory of Hardy--Lorentz spaces on $X$.

As applications of Hardy spaces on a metric measure space $X$ of homogeneous
type, Liu et al. \cite{lyy18} considered the bilinear
decomposition for pointwise products of Hardy spaces and their dual spaces.
Liu et al. \cite{lcfy17,lcfy18} also obtained the
endpoint boundedness of commutators on Hardy spaces over $X$. We should mention
that, in \cite{lcfy18}, Liu et al.  introduced
almost diagonal operators, which were applied to study the boundedness
of some operators on $\BMO(X)$, the space of functions with bounded
mean oscillation (see \cite[Proposition 4.2]{lcfy18}).
Georgiadis et al. \cite{gkkp17} (see also \cite{gk20}), and Kerkyacharian and Petrushev
\cite{kp15} studied homogeneous Besov and Triebel--Lizorkin spaces, associated with
operators, on $X$. For some recent progress on the
real-variable theory of function spaces on $X$ associated with operators, see also,
for instance, \cite{bbd18,bd20,bdk,bdl,bdl20,gkkp19,gk20}. Moreover, the real-variable theory of
Besov and Triebel--Lizorkin spaces associated
with operators on spaces of homogeneous type can be applied
to statistics and probability; see, for instance,
\cite{cgkpp20}.

Motivated by these previous works, on one hand, Wang et al. \cite{whhy20} introduced Besov and
Triebel--Lizorkin spaces on a given space $X$ of homogeneous type via the exp-(I)ATI,
and also obtained the boundedness of Calder\'{o}n--Zygmund operators on these function
spaces, and Wang et al. \cite{whyy21} further established the difference characterization
of these spaces. On the other hand, Han et al. \cite{hhhlp20} introduced
another kind of Besov and Triebel--Lizorkin spaces on $X$ by using the wavelet system introduced in
\cite{ah13}, and gave a necessary and sufficient condition of the
embedding theorem of Besov and Triebel--Lizorkin spaces. Comparing the results in \cite{hhhlp20} with those in
\cite{whhy20}, the following question naturally arises.

\begin{question}\label{q-co}
Do these two kinds of Besov and Triebel--Lizorkin spaces on a given space $X$ of homogeneous type introduced,
respectively, in \cite{whhy20} and \cite{hhhlp20}, coincide?
\end{question}

The first aim of this article is to give an affirmative answer to Question \ref{q-co}.

On the other hand, it is well known that the most
important and useful cores of the real-variable
characterizations of Besov and Triebel--Lizorkin spaces on Euclidean spaces
are their atomic and their molecular characterizations (see, for instance, \cite{fj85,fj90}).
Recall that the atomic and the molecular characterizations of Besov and Triebel--Lizorkin
spaces on Ahlfors regular spaces were obtained by Han and Sawyer \cite{hs94}.
Moreover, as a special case of Triebel--Lizorkin spaces on a given space $X$ of homogeneous type, (local) Hardy
spaces on $X$ also hold their atomic and their molecular characterizations
(see \cite{hhllyy19,hyy19}). Motivated by these, it is natural to ask the following question.

\begin{question}\label{q-decay}
Do Besov and Triebel--Lizorkin spaces on a given space $X$ of homogeneous type admit atomic or molecular
characterizations?
\end{question}

In this article, we partly answer Question \ref{q-decay} by establishing the molecular characterization of Besov
and Triebel--Lizorkin spaces on $X$, which is new \emph{even} for the corresponding function spaces on RD-spaces,
while it is still unknown whether or not these function spaces
have an atomic characterization which is a \emph{challenging problem}
due to the lack of Calder\'on reproducing formulae with bounded support. To establish the molecular characterization
of Besov and Triebel--Lizorkin spaces on $X$, we
introduce almost diagonal operators on Besov and Triebel--Lizorkin sequence spaces
on $X$, and obtain their boundedness. Using this boundedness and the established
wavelet characterization of Besov and Triebel--Lizorkin
spaces, we then establish the molecular characterization of Besov and Triebel--Lizorkin
spaces and hence partly answer Question \ref{q-decay}.
Moreover, applying this molecular characterization, we further establish the
Littlewood--Paley characterizations of Triebel--Lizorkin spaces.
Similar results for inhomogeneous
Besov and Triebel--Lizorkin spaces are also obtained.

We point out that all these results get rid of their dependence on the reverse doubling
property of $\mu$ and the triangle inequality of $d$, by fully using the
geometrical property of $X$ expressed via
its equipped quasi-metric $d$, dyadic reference points, dyadic cubes, and wavelets
(see Remark \ref{geo} below for more details).
These results further \emph{complete} the real-variable theory of
Besov and Triebel--Lizorkin spaces
on spaces of homogeneous type.

The organization of the remainder of this article is as follows.

In Section \ref{s-pre}, we recall some known notions and conclusions on spaces of homogeneous type used in this article.

Section \ref{s-wave} mainly concerns about the wavelet characterization of homogeneous Besov
and Triebel--Lizorkin spaces.

In Section \ref{s-ado}, we introduce the notions of homogeneous Besov and Triebel--Lizorkin sequence spaces
and almost diagonal operators on them, and then prove that
these operators are bounded on homogeneous Besov and Triebel--Lizorkin sequence spaces.

Sections \ref{s-mol} and \ref{s-lp} mainly concern the applications of results
obtained in the previous sections.

In Section \ref{s-mol}, we first introduce the notion of molecules on $X$ in Definition
\ref{def-mol} below, and then establish the molecular characterization of Besov and
Triebel--Lizorkin spaces, which is new even for Besov and
Triebel--Lizorkin spaces on RD-spaces. Observe that, in the establishment of
the atomic and the molecular characterizations of (local) Hardy spaces
in \cite{hhllyy19,hyy19} and Hardy--Lorentz spaces in \cite{zhy20}, to avoid the
dependence on the reverse doubling property of the
equipped measure $\mu$, the main techniques are to use Calder\'on reproducing formulae
which include the terms of exponential decay, consisting of the side length of related
dyadic cubes and the distance between point and dyadic reference points,
and therefore to fully use the geometrical properties of $X$.
Since the molecules in Definition \ref{def-mol} have the form of test
functions on $X$, which only have the polynomial decay, but without the
exponential decay, it is curious how to get rid of dependence on
the reverse doubling property of $\mu$ in order to establish the molecular
characterization of Besov and Triebel--Lizorkin spaces. Indeed, to escape
the dependence on the reverse doubling property of $\mu$,
differently from those molecules in \cite[Definition (6.21)]{hs94} centered at
all dyadic cubes, we choose these molecules in Definition \ref{def-mol}
centered at \emph{some subtly selected dyadic cubes}, namely, on those dyadic cubes
which are ``supports" of wavelets constructed in \cite{ah13},
so that we can fully use the wavelet characterization of Besov and Triebel--Lizorkin spaces,
and therefore the geometrical properties of $X$.

In Section \ref{s-lp}, we establish the Littlewood--Paley function characterizations of Triebel--Lizorkin
spaces on a given space $X$ of homogeneous type. Indeed, we use the molecular characterization of Triebel--Lizorkin spaces
established in Section \ref{s-mol} to obtain their Lusin area function characterization.
Moreover, using this Lusin area function characterization and establishing an important change-of-angle formula
for the variant of the Lusin area function (see Proposition \ref{prop-angle} below),
we then obtain the Littlewood--Paley $g_\lz^*$-function characterization
of Triebel--Lizorkin spaces, in which the range of $\lz$ is the \emph{known best possible}
(see Remark \ref{rem-glz} below).

In Section \ref{s-in}, we present all the corresponding results in the inhomogeneous case.

At the end of this section, we make some conventions on notation. We \emph{always assume} that $\omega_0$ is as
in \eqref{eq-updim} and $\eta$ is the H\"{o}lder regularity index of approximations of the
identity with exponential decay (see Definition \ref{def-eti} below). We assume that $\dz$ is a very small
positive number, for instance, $\dz\le(2A_0)^{-10}$ in order to construct the dyadic cube system and the
wavelet system on $X$ (see \cite[Theorem 2.2]{hk12} or Lemma \ref{lem-cube} below).
For any $x,\ y\in X$ and $r\in(0,\fz)$, let
$$
V_r(x):=\mu(B(x,r))\quad \mathrm{and}\quad V(x,y):=
\begin{cases}
\mu(B(x,d(x,y))) & \textup{if }x\neq y,\\
0 & \textup{if } x=y,
\end{cases}
$$
where $B(x,r):=\{y\in X:\ d(x,y)<r\}$.
We always let $\nn:=\{1,2,\ldots\}$ and $\zz_+:=\nn\cup\{0\}$.
For any $p\in[1,\fz]$, we use $p'$ to denote its \emph{conjugate index}, namely, $1/p+1/p'=1$.
The symbol $C$ denotes a positive constant which is independent of the main parameters, but it may vary from
line to line. We also use $C_{(\az,\bz,\ldots)}$ to denote a positive constant depending on the indicated
parameters $\az$, $\bz$, \ldots. The symbol $A \ls B$ means that there exists a positive constant $C$ such
that $A \le CB$. The symbol $A \sim B$ is used as an abbreviation of $A \ls B \ls A$. If $f\le Cg$ and $g=h$ or
$g\le h$, we then write $f\ls g\sim h$ or $f\ls g\ls h$, \emph{rather than} $f\ls g=h$
or $f\ls g\le h$. For any $s,\ t\in\rr$, denote the \emph{minimum} of $s$
and $t$ by $s\wedge t$ and the \emph{maximum} by $s\vee t$.
For any finite set $\CJ$, we use $\#\CJ$ to denote its \emph{cardinality}. Also, for any set
$E$ of $X$, we use $\mathbf{1}_E$ to denote its \emph{characteristic function} and $E^\complement$ the set
$X\setminus E$. For any $x,\ y\in X$ and $\ez,\ r\in(0,\fz)$, we always write
$$
P_\ez(x,y;r):=\frac 1{V_r(x)+V(x,y)}\lf[\frac{r}{r+d(x,y)}\r]^\ez.
$$

\section{Preliminaries}\label{s-pre}

In this section, we mainly recall some known notions and conclusions on a given space
$(X,d,\mu)$ of homogeneous type. We begin with the notion of spaces of test functions, which was originally
introduced in \cite{hmy06} (see also \cite{hmy08}).

\begin{definition}\label{def-test}
Let $x_1\in X$, $r\in(0,\fz)$, $\bz\in(0,1]$, and $\gz\in(0,\fz)$. A function $f$ defined on $X$ is called a
\emph{test function of type $(x_1,r,\bz,\gz)$}, denoted by $f\in\CG(x_1,r,\bz,\gz)$, if there exists a
positive constant $C$ such that
\begin{enumerate}
\item (the \emph{size condition}) for any $x\in X$, $|f(x)|\le C P_\gz(x_1,x;r)$;
\item (the \emph{regularity condition}) for any $x,\ y\in X$ satisfying $d(x,y)\le (2A_0)^{-1}[r+d(x_1,x)]$,
\begin{equation*}
|f(x)-f(y)|\le C\lf[\frac{d(x,y)}{r+d(x_1,x)}\r]^\bz P_\gz(x_1,x;r).
\end{equation*}
\end{enumerate}
For any $f\in\CG(x_1,r,\bz,\gz)$, its norm
$\|f\|_{\CG(x_1,r,\bz,\gz)}$
is defined by setting
$$
\|f\|_{\CG(x_1,r,\bz,\gz)}:=\inf\{C\in(0,\fz):\ \textup{(i) and (ii) hold true}\}.
$$
Also define $\GO{x_1,r,\bz,\gz}$ by setting
$$
\GO{x_1,r,\bz,\gz}:=\lf\{f\in\CG(x_1,r,\bz,\gz):\ \int_X f(x)\,d\mu(x)=0\r\}
$$
equipped with the norm $\|\cdot\|_{\GO{x_1,r,\bz,\gz}}:=\|\cdot\|_{\CG(x_1,r,\bz,\gz)}$.
\end{definition}

It is known that, for any fixed $\bz\in(0,1]$ and $\gz\in(0,\fz)$, $\CG(x_1,r,\bz,\gz)$ and
$\mathring\CG(x_1,r,\bz,\gz)$ are Banach spaces.
In what follows, we fix $x_0\in X$, and let
$\CG(\bz,\gz):=\CG(x_0,1,\bz,\gz)$ and $\GO{\bz,\gz}:=\GO{x_0,1,\bz,\gz}$.
It is easy to show that,
for any $x_1\in X$ and $r\in(0,\fz)$,
$\CG(\bz,\gz)=\CG(x_1,r,\bz,\gz)$ and
$\GO{\bz,\gz}=\mathring\CG(x_1,r,\bz,\gz)$  with the positive
equivalence constants depending on $x_0$, $x_1$, and $r$.

Now, we suppose that $\ez\in(0,1]$ and $\bz,\ \gz\in(0,\ez]$, and define $\CG_0^\ez(\bz,\gz)$ [resp.,
$\mathring{\CG}_0^\ez(\bz,\gz)$] to be the closure of $\CG(\ez,\ez)$ [resp., $\mathring\CG(\ez,\ez)$] in the space
$\CG(\bz,\gz)$ [resp., $\mathring\CG(\bz,\gz)$], equipped with the norm
$\|\cdot\|_{\CG_0^\ez(\bz,\gz)}:=\|\cdot\|_{\CG(\bz,\gz)}$ [resp., $\|\cdot\|_{\mathring{\CG}_0^\eta(\bz,\gz)}
:=\|\cdot\|_{\mathring\CG(\bz,\gz)}$]. Denote by $(\CG_0^\ez(\bz,\gz))'$
[resp., $(\mathring{\CG}_0^\ez(\bz,\gz))'$]
the dual space of $\CG_0^\ez(\bz,\gz)$ [resp., $\mathring\CG_0^\ez(\bz,\gz)$], equipped with the weak-$*$
topology. The spaces $\CG_0^\ez(\bz,\gz)$ and $\mathring{\CG}_0^\ez(\bz,\gz)$ are called the \emph{spaces of test
functions}, and $(\CG_0^\ez(\bz,\gz))'$ and $(\mathring{\CG}_0^\ez(\bz,\gz))'$
the \emph{spaces of distributions}.

Let $p\in(0,\fz]$. The \emph{Lebesgue space $L^p(X)$} is defined to be the set of all measurable functions $f$
on $X$ such
that $\|f\|_{L^p(X)}<\fz$, where
$$
\|f\|_{L^p(X)}:=\begin{cases}
\lf[\displaystyle \int_X|f(x)|^p\,d\mu(x)\r]^{1/p} & \textup{if } p\in(0,\fz),\\
\displaystyle\esssup_{x\in X} |f(x)| & \textup{if } p=\fz.
\end{cases}
$$
For any given $p\in(0,\fz)$, the \emph{weak Lebesgue space $L^{p,\fz}(X)$} is defined to be the set of all
measurable functions $f$ on $X$ such that
$$
\|f\|_{L^{p,\fz}(X)}:=\sup_{\lz\in(0,\fz)}\lz[\mu(\{x\in X:\ |f(x)|>\lz\})]^{1/p}<\fz.
$$
Denote by $L^1_\loc(X)$ the set of all locally integrable functions on $X$. For any $f\in L^1_\loc(X)$, the
\emph{Hardy--Littlewood maximal function $\CM(f)$} is defined by setting, for any $x\in X$,
$$
\CM(f)(x):=\sup_{\textup{ball }B\ni x}\frac 1{\mu(B)}\int_B|f(y)|\,d\mu(y).
$$
In \cite{cw71}, Coifman and Weiss showed that the Hardy--Littlewood maximal operator $\CM$ is bounded on
$L^p(X)$ with any given $p\in(1,\fz)$ (see also \cite[(3.6)]{cw77}), and   from
$L^1(X)$ to $L^{1,\fz}(X)$ (see, for instance, \cite[pp.\ 71--72, Theorem 2.1]{cw71}).

Next, we recall the dyadic system on $X$ introduced by Hyt\"{o}nen and Kairema \cite{hk12}.

\begin{lemma}[{\cite[Theorem 2.2]{hk12}}]\label{lem-cube}
Fix constants $0<c_0\le C_0<\fz$ and $\dz\in(0,1)$ such that $12A_0^3C_0\dz\le c_0$. Assume that
a set of points, $\CX^{k}:=\{z_\az^k:\ k\in\zz,\ \az\in\CA_k\}\subset X$ with $\CA_k$, for any $k\in\zz$, being a
countable set of indices, has the following properties: for any $k\in\zz$,
\begin{enumerate}
\item[\rm (i)] $d(z_\az^k,z_\bz^k)\ge c_0\dz^k$ if $\az\neq\bz$;
\item[\rm (ii)] $\min_{\az\in\CA_k} d(x,z_\az^k)\le C_0\dz^k$ for any $x\in X$.
\end{enumerate}
Then there exists a family of sets, $\{Q_\az^k:\  k\in\zz,\ \az\in\CA_k\}$, satisfying
\begin{enumerate}
\item[\rm (iii)] for any $k\in\zz$, $\bigcup_{\az\in\CA_k} Q_\az^k=X$ and $\{ Q_\az^k:\ \az\in\CA_k\}$
is disjoint;
\item[\rm (iv)] if $k,\ l\in\zz$ and $k\le l$, then either $Q_\az^k\supset Q_\bz^l$ or
$Q_\az^k\cap Q_\bz^l=\emptyset$;
\item[\rm (v)] for any $k\in\zz$ and $\az\in\CA_k$, $B(z_\az^k,c_\natural\dz^k)\subset Q_\az^k\subset
B(z_\az^k,C^\natural\dz^k)$,
where $c_\natural:=(3A_0^2)^{-1}c_0$, $C^\natural:=2A_0C_0$, and $z_\az^k$ is called ``the \emph{center}'' of
$Q_\az^k$.
\end{enumerate}
\end{lemma}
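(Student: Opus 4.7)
The plan is to construct the dyadic cubes hierarchically from the reference points $\{z_\az^k\}$, following the strategy of Christ and its adaptation by Hyt\"onen--Kairema to the quasi-metric setting. First I would fix, for each $k\in\zz$, a \emph{parent map} $\pi_k:\CA_{k+1}\to\CA_k$ by letting $\pi_k(\bz)$ be any index in $\CA_k$ that minimizes $d(z_\bz^{k+1},z_\az^k)$, with ties broken through a fixed enumeration of $\CA_k$. Composition gives an ancestor map $\pi_k^l:\CA_l\to\CA_k$ for any $k\le l$, and the covering property (ii) combined with the geometric series $\sum_{j\ge 0}\dz^j$ yields the uniform bound $d(z_\bz^l,z_{\pi_k^l(\bz)}^k)\ls C_0\dz^k/(1-\dz)$ after iterating the quasi-triangle inequality across levels.

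Next I would define preliminary Voronoi-type cells $\wz Q_\az^k$ consisting of all $x\in X$ whose closest reference point at level $k$ is $z_\az^k$ (using the same tie-breaking rule as for $\pi_k$), and then obtain the desired cubes by aggregation along the ancestor tree: set $Q_\az^k:=\bigcup\{\wz Q_\bz^l:\ l\ge k,\ \pi_k^l(\bz)=\az\}$, with a short limiting argument absorbing the behavior as $l\to\fz$ on a $\mu$-null set. Property (iii) is immediate from the partition structure of $\{\wz Q_\az^k\}_{\az\in\CA_k}$ at each single level, while (iv) follows from the transitivity of the ancestor relation: if $Q_\bz^l$ appears as a summand of $Q_\az^k$ in this construction, then every descendant of $\bz$ at a finer level is automatically a descendant of $\az$ as well.

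The heart of the proof is property (v), where the smallness hypothesis $12A_0^3 C_0\dz\le c_0$ is actually used. For the outer inclusion, any $x\in Q_\az^k$ lies in some $\wz Q_\bz^l$ with $\pi_k^l(\bz)=\az$, so
\begin{equation*}
d(x,z_\az^k)\le A_0\bigl[d(x,z_\bz^l)+d(z_\bz^l,z_\az^k)\bigr];
\end{equation*}
combining $d(x,z_\bz^l)\le C_0\dz^l$ with the ancestor estimate and passing to the limit $l\to\fz$ yields $d(x,z_\az^k)\le 2A_0 C_0\dz^k=C^\natural\dz^k$. For the inner inclusion, given $x\in B(z_\az^k,c_\natural\dz^k)$, I would show by induction on $l\ge k$ that the unique $\wz Q_\bz^l$ containing $x$ must satisfy $\pi_k^l(\bz)=\az$: any competing $\az'\neq\az$ would produce a chain whose accumulated quasi-triangle factor $A_0^3$ times $C_0\dz/(1-\dz)$, added to the initial radius $c_\natural\dz^k$, would exceed the separation $c_0\dz^k/2$ forced by (i), which is ruled out precisely by the hypothesis on $\dz$ and by the choice $c_\natural=(3A_0^2)^{-1}c_0$.

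The main obstacle I anticipate is the simultaneous enforcement of the nesting property (iv) and the inner ball containment in (v): the Voronoi-type cells at different levels do not automatically nest, so the ancestor construction must be propagated through every finer level in a quantitatively stable way. The constant $A_0\ge 1$ accumulates through repeated applications of the quasi-triangle inequality, and the smallness of $\dz$ is precisely the device that keeps this accumulation harmless; a careful bookkeeping of these $A_0$-factors is what forces the explicit constants $c_\natural=(3A_0^2)^{-1}c_0$ and $C^\natural=2A_0C_0$ appearing in the statement.
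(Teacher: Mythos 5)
First, a structural point: the paper does not prove this lemma at all; it is imported verbatim from \cite[Theorem 2.2]{hk12}, so your proposal can only be measured against the Hyt\"onen--Kairema construction itself. Your overall strategy (nearest-point parent map on the reference points, aggregation along the descendant tree, ball inclusions via summing the geometric series of drifts, with $12A_0^3C_0\dz\le c_0$ taming the accumulated $A_0$-factors) is indeed the Christ/Hyt\"onen--Kairema strategy, and your treatment of (v) is essentially viable, modulo bookkeeping: the ancestor drift is of order $A_0C_0\dz^k/(1-A_0\dz)$, not $C_0\dz^k/(1-\dz)$, and for the inner inclusion one must compare a competing level-$k$ center against the drift accumulated only down to level $k+1$ (which carries the crucial extra factor $\dz$) before the final parent step; comparing the full drift down to level $k$ against the separation $c_0\dz^k$ fails outright, since the last step alone may move by $C_0\dz^k\ge c_0\dz^k$.

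The genuine gap is in (iii) and (iv). With $Q_\az^k:=\bigcup\{\wz Q_\bz^l:\ l\ge k,\ \pi_k^l(\bz)=\az\}$ the union mixes Voronoi cells of all generations, and the level-wise Voronoi assignment of a point is not consistent across generations: a point $x$ near the boundary of $\wz Q_\az^k$ may have its nearest level-$(k+1)$ reference point on the other side of that boundary, i.e.\ lie in a cell $\wz Q_{\gz}^{k+1}$ whose parent is some $\az'\neq\az$, and then $x\in Q_\az^k\cap Q_{\az'}^k$. So disjointness in (iii) is not ``immediate from the partition structure at each single level'' (that structure is destroyed by taking the union over all $l\ge k$), and the dichotomy in (iv) fails for the same reason --- transitivity of the ancestor relation only yields the containment half. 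This cross-level consistency is precisely the hard point of \cite{hk12}: one first builds closed cubes (closures of the sets of descendant reference points) and open cubes, proves the ball inclusions for those, and then produces genuinely half-open cubes by a careful selection procedure (a fixed order on each $\CA_k$ together with a coarse-to-fine consistency argument) so that the partition and the nesting hold exactly, at every point. Your ``short limiting argument absorbing the behavior as $l\to\fz$ on a $\mu$-null set'' would at best give a Christ-type system with exceptional null sets, which is strictly weaker than the statement and insufficient for how the lemma is used in this paper (for instance, the exact decomposition of each $Q_\az^k$ into the subcubes $Q_\az^{k,m}$ in Section 2 requires the exact partition and nesting).
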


According to the construction of $\{\CA_k\}_{k\in\zz}$, we may further assume that, for any $k\in\zz$,
$\CX^{k+1}\supset\CX^{k}$. Thus, for any $k\in\zz$, we can let
$$
\CG_k:=\CA_{k+1}\setminus\CA_k\qquad \textup{and}\qquad
\CY^k:=\{z_\az^{k+1}\}_{\az\in\CG_k}=:\{y_\az^{k}\}_{\az\in\CG_k}.
$$

The following lemma is on the existence of the wavelet system on $X$, which is a combination of
\cite[Theorem 7.1 and Corollary 10.4]{ah13}.

\begin{lemma}\label{lem-wave}
There exist constants $a\in(0,1]$, $\eta\in(0,1)$, $C,\ \nu\in(0,\fz)$, and wavelet functions
$\{\psi_\az^k:\ k\in\zz,\ \az\in\CG_k\}$ [resp., $\{\psi_\az^k:\ k\in\zz\cap[k_0,\fz),\ \az\in\CG_k\}$ for
some $k_0\in\zz$ when $\mu(X)<\fz$] satisfying that, for any $k\in\zz$ [resp., $k\in\zz\cap[k_0\in\fz)$ when
$\mu(X)<\fz$] and $\az\in\CG_k$,
\begin{enumerate}
\item (the \emph{decay condition}) for any $x\in X$,
$$
\lf|\psi_\az^k(x)\r|\le
\frac C{\sqrt{V_{\dz^k}(y_\az^k)}}\exp\lf\{-\nu\lf[\frac{d(x,y_\az^k)}{\dz^k}\r]^a\r\};
$$
\item (the \emph{H\"{o}lder-regularity condition}) for any $x,\ x'\in X$ with $d(x,x')\le\dz^k$,
$$
\lf|\psi_\az^k(x)-\psi_\az^k(x')\r|\le\frac C{\sqrt{V_{\dz^k}(y_\az^k)}}
\lf[\frac{d(x,x')}{\dz^k}\r]^\eta\exp\lf\{-\nu\lf[\frac{d(x,y_\az^k)}{\dz^k}\r]^a\r\};
$$
\item (the \emph{cancellation condition})
$$
\int_X \psi_\az^k(x)\,d\mu(x)=0.
$$
\end{enumerate}
Moreover, the functions $\{\psi_\az^k\}_{k,\, \az}$ form an orthonormal basis of $L^2(X)$, and an unconditional
basis of $L^p(X)$ for any given $p\in(1,\fz)$.
\end{lemma}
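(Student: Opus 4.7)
The proof plan is to invoke the explicit wavelet construction carried out by Auscher and Hyt\"{o}nen in \cite{ah13}, so the task reduces to combining their Theorem 7.1 and Corollary 10.4 and verifying that the conditions stated here match theirs. Concretely, I would first apply Lemma \ref{lem-cube} to fix a system of dyadic reference points $\{z_\az^k\}_{k,\,\az}$ and dyadic cubes $\{Q_\az^k\}_{k,\,\az}$, which provides the basic scale structure on $X$ required even to formulate the wavelets.

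Next, I would set up the multiresolution analysis (MRA) of Auscher--Hyt\"{o}nen. For each scale $k\in\zz$, one constructs a system of spline-type scaling functions $\{\phi_\az^k\}_{\az\in\CA_k}$ attached to the reference points, each having exponential decay of the form $\exp\{-\nu[d(\cdot,z_\az^k)/\dz^k]^a\}$ and $\eta$-H\"{o}lder regularity. These span nested subspaces $V_k\subset L^2(X)$ with $V_k\subset V_{k+1}$, whose union is dense in $L^2(X)$ (the finite-measure case $\mu(X)<\fz$ requires a coarsest scale $k_0$, which is precisely the source of the restriction $k\ge k_0$ in the statement). The wavelet space $W_k$ is defined as the orthogonal complement of $V_k$ in $V_{k+1}$. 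Since refining $V_k$ to $V_{k+1}$ corresponds exactly to adding reference points indexed by $\CG_k=\CA_{k+1}\setminus\CA_k$, a basis of $W_k$ is naturally parameterized by $\CG_k$ and yields the functions $\{\psi_\az^k\}_{\az\in\CG_k}$.

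The hard part, and the main obstacle, is producing \emph{orthonormal} wavelets that simultaneously preserve both the exponential decay (i) and the $\eta$-H\"{o}lder regularity (ii). Following Auscher--Hyt\"{o}nen, this is done by an operator-theoretic orthogonalization: the Gram matrix of the initial spline system is close to the identity and has exponentially decaying off-diagonal entries, so its inverse square root is accessible via a Neumann-type series whose iterated kernels continue to decay exponentially. Applying this inverse square root to the splines then produces an orthonormal system whose quantitative decay and regularity estimates are only slightly worse than those of the splines themselves, which after tracking constants yields (i) and (ii). The cancellation condition (iii) is automatic, since the constants (at the relevant scales) belong to each $V_k$ and $\psi_\az^k\in W_k\perp V_k$, whence $\int_X\psi_\az^k\,d\mu=0$.

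Finally, the fact that $\{\psi_\az^k\}_{k,\,\az}$ forms an orthonormal basis of $L^2(X)$ reduces to density of $\bigcup_k V_k$ in $L^2(X)$, which is a standard approximation-of-the-identity argument based on the spline structure. The unconditional basis property in $L^p(X)$ for any given $p\in(1,\fz)$ is exactly \cite[Corollary 10.4]{ah13}, which is obtained via Calder\'{o}n--Zygmund and Littlewood--Paley arguments that are made available precisely by the exponential decay and H\"{o}lder regularity established in (i) and (ii).
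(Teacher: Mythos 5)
Your proposal is correct and follows essentially the same route as the paper: the paper does not prove this lemma at all but simply quotes it as a combination of \cite[Theorem 7.1 and Corollary 10.4]{ah13}, and your sketch of the Auscher--Hyt\"onen construction (dyadic reference points, spline MRA, Gram-matrix orthogonalization of the complement spaces $W_k$, then the $L^p$ unconditionality from Corollary 10.4) is precisely the content of those cited results. No gap to report.
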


Now, we recall the notion of approximations of the identity with exponential decay introduced in
\cite{hlyy19}. In what follows, for any $k\in\zz$ and $y\in X$, let
$d(y,\mathcal Y^k):=\inf_{z\in\CY^k} d(y,z)$.

\begin{definition}[{\cite[Definition 2.7]{hlyy19}}]\label{def-eti}
Assume that $\mu(X)=\fz$. A sequence $\{Q_k\}_{k\in\zz}$ of bounded linear integral operators on
$L^2(X)$ is called an \emph{approximation of the identity with exponential decay}
(for short, $\exp$-ATI) if there exist constants $C,\ \nu\in(0,\fz)$, $a\in(0,1]$, and $\eta\in(0,1)$
such that, for any $k\in\zz$, the kernel of the operator $Q_k$, a function on $X\times X$, which is still denoted
by $Q_k$, satisfies
\begin{enumerate}
\item $\sum_{k=-\fz}^\fz Q_k=I$ in $L^2(X)$, where $I$ is the identity operator on $L^2(X)$;
\item for any $x,\ y\in X$,
\begin{align*}
|Q_k(x,y)|&\le C\frac1{\sqrt{V_{\dz^k}(x)\,V_{\dz^k}(y)}}
\exp\lf\{-\nu\lf[\frac{d(x,y)}{\dz^k}\r]^a\r\}\exp\lf\{-\nu\lf[\frac{\max\{d(x, \CY^k),d(y,\CY^k)\}}{\dz^k}\r]^a\r\}\\
&=:CE_k(x,y);
\end{align*}
\item for any $x,\ x',\ y\in X$ with $d(x,x')\le\dz^k$,
\begin{equation*}
|Q_k(x,y)-Q_k(x',y)|+|Q_k(y,x)-Q_k(y, x')|\le C\lf[\frac{d(x,x')}{\dz^k}\r]^\eta E_k(x,y);
\end{equation*}
\item for any $x,\ x',\ y,\ y'\in X$ with $d(x,x')\le\dz^k$ and $d(y,y')\le\dz^k$,
\begin{equation*}
|[Q_k(x,y)-Q_k(x',y)]-[Q_k(x,y')-Q_k(x',y')]|
\le C\lf[\frac{d(x,x')}{\dz^k}\r]^\eta\lf[\frac{d(y,y')}{\dz^k}\r]^\eta E_k(x,y);
\end{equation*}
\item for any $x,\ y\in X$,
\begin{equation*}
\int_X Q_k(x,y')\,d\mu(y')=0=\int_X Q_k(x',y)\,d\mu(x').
\end{equation*}
\end{enumerate}
\end{definition}

The existence of such an exp-ATI on $X$ as in Definition \ref{def-eti} is guaranteed by \cite[Theorem 7.1]{ah13},
where $\eta$ is the same as in \cite[Theorem 3.1]{ah13} (see also
Lemma \ref{lem-wave}) which might be very small (see also \cite[Remark 2.8(i)]{hlyy19}).
However, if $d$ is a metric, then $\eta\in(0,1)$ can be chosen
arbitrarily close to $1$ (see \cite[Corollary 6.13]{ht14}).
Moreover, in Definition \ref{def-eti}, we need $\diam X=\fz$ to guarantee (v). Observe that it was shown in
\cite[Lemma 5.1]{ny97} or \cite[Lemma 8.1]{ah13} that $\diam X=\fz$ implies $\mu(X)=\fz$ and hence, under
the assumptions of this article, $\diam X=\fz$ if and only if $\mu(X)=\fz$.

Next, we recall the Calder\'on reproducing formulae. In what follows, by (iii) and (iv) of Lemma \ref{lem-cube},
we always choose a $j_0\in\nn$ to be  sufficiently large such that,
for any $k\in\zz$ and $\az\in\CA_k$,
$$
Q_\az^k=\bigcup_{\{\tau\in\CA_{k+j_0}:\ Q_\tau^{k+j_0}\subset Q_\az^k\}}Q_\tau^{k+j_0}.
$$
Then we let ${\mathfrak N}(k,\az):=\{\tau\in\CA_{k+j_0}:\ Q_\tau^{k+j_0}\subset Q_\az^k\}$
and $N(k,\az)$ to be the \emph{cardinality} of the set ${\mathfrak N}(k,\az)$. Moreover, by Lemma \ref{lem-cube}(v) and
the doubling property \eqref{eq-doub}, we find that $N(k,\az)$ is controlled by a harmless positive
constant depending only on $j_0$, $A_0$, and $\omega$ in \eqref{eq-doub}.
For any $k\in\zz$ and $\az\in\CA_k$, we rearrange
the set $\{Q_\tau^{k+j_0}:\ \tau\in{\mathfrak N}(k,\az)\}$ as $\{Q_\az^{k,m}\}_{m=1}^{N(k,\az)}$, whose centers are
denoted, respectively, by $\{z_\az^{k,m}\}_{m=1}^{N(k,\az)}$.

The following discrete homogeneous Calder\'on reproducing formula was  established in
\cite[Theorems 5.11]{hlyy19}.

\begin{lemma}\label{lem-hdrf}
Let $\{Q_k\}_{k\in\zz}$ be an {\rm exp-ATI} and $\bz,\ \gz\in(0,\eta)$ with $\eta$ as in Definition \ref{def-eti}.
For any $k\in\zz$, $\az\in\CA_k$, and  $m\in\{1,\ldots,N(k,\az)\}$, suppose that $y_\az^{k,m}$ is an arbitrary
point in $Q_\az^{k,m}$. Then there exists a sequence $\{\wz{Q}_k\}_{k=-\fz}^\fz$ of bounded linear operators
on $L^2(X)$ such that, for any $f\in(\GOO{\bz,\gz})'$,
\begin{align*}
f(\cdot)=\sum_{k=-\fz}^\fz\sum_{\az\in\CA_k}\sum_{m=1}^{N(k,\az)}\mu\lf(Q_\az^{k,m}\r)
\wz{Q}_k\lf(\cdot,y_\az^{k,m}\r)Q_kf\lf(y_\az^{k,m}\r)
\end{align*}
in $(\GOO{\bz,\gz})'$. Moreover, there exists a positive constant $C$, independent of $f$ and
$\{y_\az^{k,m}:\ k\in\zz,\ \az\in\CA_k,\ m\in\{1,\ldots,N(k,\az)\}\}$ such that
\begin{enumerate}
\item for any $x,\ y\in X$,
$$
\lf|\wz{Q}_k(x,y)\r|\le C P_\gz(x,y;\dz^k);
$$
\item for any $x,\ x',\ y\in X$ with $d(x,x')\le(2A_0)^{-1}[\dz^k+d(x,y)]$,
$$
\lf|\wz{Q}_k(x,y)-\wz{Q}_k(x',y)\r|\le C \lf[\frac{d(x,x')}{\dz^k+d(x,y)}\r]^\bz P_\gz(x,y;\dz^k);
$$
\item for any $x\in X$,
\begin{equation*}
\int_X \wz{Q}_k(x,y)\,d\mu(y)=0=\int_X\wz{Q}_k(y,x)\,d\mu(y).
\end{equation*}
\end{enumerate}
\end{lemma}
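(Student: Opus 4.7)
The plan is to derive the discrete Calderón reproducing formula from a continuous one by discretizing at scale $\dz^{k+j_0}$ and then inverting the resulting approximate identity via a Neumann series, following the scheme of \cite[Theorem 5.11]{hlyy19}. The first step is to produce a continuous identity of the shape $I = \sum_{k \in \zz} \mathcal{R}_k Q_k$ on $L^2(X)$, with $\mathcal{R}_k$ already enjoying the size, Hölder regularity, and cancellation bounds (i)--(iii) claimed for $\wz{Q}_k$. Such an $\mathcal{R}_k$ is obtained from the exp-ATI $\{Q_k\}_{k \in \zz}$ by a standard $TT^*$/Cotlar--Stein argument, using only (i)--(v) of Definition \ref{def-eti}, without any appeal to reverse doubling.

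Next, I would freeze the dyadic partition at level $k+j_0$ and approximate
$$
\mathcal{R}_k Q_k f(x) = \sum_{\az \in \CA_k} \sum_{m=1}^{N(k,\az)} \int_{Q_\az^{k,m}} \mathcal{R}_k(x, y)\, Q_k f(y)\, d\mu(y)
$$
by the Riemann sum $\sum_{\az, m} \mu(Q_\az^{k,m})\, \mathcal{R}_k(x, y_\az^{k,m})\, Q_k f(y_\az^{k,m})$. Let $T_{j_0}$ be the operator so defined and set $R_{j_0} := I - T_{j_0}$. The decisive estimate is that, for some $\sigma \in (0, \eta)$, $R_{j_0}$ extends to a bounded operator on $\GOO{\bz,\gz}$ and, by duality, on $(\GOO{\bz,\gz})'$, with operator norm at most $C \dz^{j_0 \sigma}$. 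This rests on the diameter bound $\diam Q_\az^{k,m} \leq C^\natural \dz^{k+j_0}$, on the Hölder estimates (iii)--(iv) of Definition \ref{def-eti}, and on an almost-orthogonality summation in $k$.

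Choosing $j_0$ large enough then forces $\|R_{j_0}\| < 1$ on both spaces, so the Neumann series $T_{j_0}^{-1} = \sum_{\ell=0}^{\fz} R_{j_0}^\ell$ converges. Setting
$$
\wz{Q}_k(x, y) := T_{j_0}^{-1}\lf[\mathcal{R}_k(\cdot, y)\r](x)
$$
with $T_{j_0}^{-1}$ acting in the first variable, the identity $f = T_{j_0}^{-1} T_{j_0} f$ in $(\GOO{\bz,\gz})'$ unfolds into the claimed discrete formula. To verify the size, regularity, and cancellation bounds (i)--(iii) for $\wz{Q}_k$, I would check that each iterate $R_{j_0}^\ell \mathcal{R}_k(\cdot, y)$ again lies in the same kernel class as $\mathcal{R}_k(\cdot, y)$ with a constant decaying geometrically in $\ell$, so that the series sums to a kernel of the desired type.

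The main obstacle is the kernel estimate on $R_{j_0}$ without access to reverse doubling. A naïve estimate substituting $\mathcal{R}_k(x, y) - \mathcal{R}_k(x, y_\az^{k,m})$ on each $Q_\az^{k,m}$ produces volume ratios $V_{\dz^k}(y_\az^{k,m})/\mu(Q_\az^{k,m})$ that cannot be absorbed by $\dz^{j_0 \sigma}$ in a general doubling space. As in \cite{hlyy19}, this is overcome by exploiting the extra exponential factor $\exp\{-\nu[\max\{d(x, \CY^k), d(y, \CY^k)\}/\dz^k]^a\}$ in Definition \ref{def-eti}(ii), which localizes the effective support of $Q_k$ in a thin neighborhood of the reference set $\CY^k$ and allows the unfavourable volume ratio to be traded for exponential decay in the distance to $\CY^k$. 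Together with the mixed second-difference regularity (iv), this is what produces the crucial factor $\dz^{j_0 \sigma}$ and is the technical heart of the argument.
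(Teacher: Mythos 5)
Your outline is, in essence, the actual proof: the paper does not prove this lemma itself but quotes it from \cite[Theorem 5.11]{hlyy19}, and the argument there follows precisely your scheme---start from a continuous reproducing formula, replace each $\wz Q_kQ_k$ by its Riemann sum over the cubes $Q_\az^{k,m}$ at level $k+j_0$, show the remainder operator is bounded on $\GOO{\bz,\gz}$ with norm $\ls\dz^{j_0\sigma}$ (the factor $\exp\{-\nu[\max\{d(x,\CY^k),d(y,\CY^k)\}/\dz^k]^a\}$ in Definition \ref{def-eti}(ii) being exactly what substitutes for reverse doubling, as you note), invert by a Neumann series, and read off (i)--(iii) from the geometric decay of the iterates. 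The only minor imprecision is that the continuous formula with kernels already satisfying (i)--(iii) is not a mere $TT^*$/Cotlar--Stein consequence but is itself obtained in \cite{hlyy19} by the same Coifman-type decomposition and Neumann inversion; this does not affect the architecture of your argument.
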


Now, we recall the inhomogeneous discrete Calder\'{o}n reproducing formula. To this end, we need the notion
of $\exp$-IATIs. Recall that, by \cite[Remark 6.2]{hlyy19}, the existence of an exp-IATI does not need the
assumption $\mu(X)=\fz$.

\begin{definition}[{\cite[Definition 6.1]{hlyy19}}]\label{def-ieti}
A sequence $\{Q_k\}_{k=0}^\fz$ of bounded operators on $L^2(X)$ is called an \emph{inhomogeneous
approximation of the identity with exponential decay} (for short, $\exp$-IATI) if there exist constants
$C,\ \nu\in(0,\fz)$, $a\in(0,1]$, and $\eta\in(0,1)$ such that, for any $k\in\zz_+$, the kernel of the
operator $Q_k$, which is still denoted by $Q_k$, has the following properties:
\begin{enumerate}
\item (the \emph{identity condition}) $\sum_{k=0}^\fz Q_k=I$ in $L^2(X)$;
\item when $k\in\nn$, $Q_k$ satisfies (ii) through (v) of Definition \ref{def-eti};
\item $Q_0$ satisfies (ii), (iii), and (iv) of Definition \ref{def-eti} with $k=0$ but without the term
$$
\exp\lf\{-\nu\lf[\max\lf\{d\lf(x,\CY^0\r),d\lf(y,\CY^0\r)\r\}\r]^a\r\};
$$
moreover, for any $x\in X$, $\int_X Q_0(x,y)\,d\mu(y)=1=\int_X Q_0(y,x)\,d\mu(y)$.
\end{enumerate}
\end{definition}

Next, we recall the following inhomogeneous Calder\'{o}n reproducing formula established in \cite{hlyy19}.

\begin{lemma}\label{lem-idrf}
Let $\{Q_k\}_{k=0}^\fz$ be an $\exp$-{\rm IATI} and $\bz,\ \gz\in(0,\eta)$ with $\eta$ as in Definition
\ref{def-eti}. Then there exist $N,\ j_0\in\nn$
such that, for any $y_\az^{k,m}\in Q_\az^{k,m}$ with $k\in\nn$, $\az\in\CA_k$, and
$m\in\{1,\ldots,N(k,\az)\}$, there exists a sequence $\{\wz Q_k\}_{k=0}^\fz$ of bounded linear operators on
$L^2(X)$ such that, for any $f\in(\go{\bz,\gz})'$,
\begin{align}\label{eq-idrf}
f(\cdot)=&\sum_{\az\in\CA_0}\sum_{m=1}^{N(0,\az)}\int_{Q_\az^{0,m}}\wz Q_0(\cdot,y)\,d\mu(y)Q_{\az,1}^{0,m}(f)\noz\\
&\quad+\sum_{k=1}^N\sum_{\az\in\CA_k}\sum_{m=1}^{N(k,\az)}\mu\lf(Q_\az^{k,m}\r)\wz Q_k\lf(\cdot,y_\az^{k,m}\r)
Q_{\az,1}^{k,m}(f)\noz\\
&\quad+\sum_{k=N+1}^\fz\sum_{\az\in\CA_k}\sum_{m=1}^{N(k,\az)}\mu\lf(Q_\az^{k,m}\r)
\wz Q_k\lf(\cdot,y_\az^{k,m}\r)Q_kf\lf(y_\az^{k,m}\r)
\end{align}
in $(\go{\bz,\gz})'$, where, for any $k\in\{0,\ldots,N\}$, $\az\in\CA_k$, and
$m\in\{1,\ldots,N(k,\az)\}$,
$$
Q_{\az,1}^{k,m}(f):=\frac 1{\mu(Q_\az^{k,m})}\int_{Q_\az^{k,m}}Q_kf(u)\,d\mu(u).
$$
Moreover, for any $k\in\zz_+$, the kernel of $\wz{Q}_k$ satisfies (i) and (ii) of Lemma \ref{lem-hdrf},
with the implicit positive constant independent of the choice of $f$ and
$\{y_\az^{k,m}:\ k\in\nn,\ \az\in\CA_k,\ k\in\{1,\ldots,N(k,\az)\}\}$,
and the following integral condition that, for any $x\in X$,
$$
\int_X\wz Q_k(x,y)\,d\mu(y)=\int_X\wz Q_k(y,x)\,d\mu(y)=\begin{cases}
1 & \hbox{if $k\in\{0,\ldots,N\}$},\\
0 & \hbox{if $k\in\{N+1,N+2,\ldots\}$}.
\end{cases}
$$
\end{lemma}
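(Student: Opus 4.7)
The plan is to derive the inhomogeneous formula from the exp-IATI identity $\sum_{k=0}^{\infty}Q_k=I$ on $L^2(X)$, following the same philosophy used for Lemma \ref{lem-hdrf} but with a cutoff at some level $N\in\nn$ to be chosen large enough to separate the ``low/medium frequency'' scales (where the kernel of $Q_k$ has no cancellation, or only finitely many have non-cancellative features) from the ``high frequency'' scales (where all the standard exp-ATI cancellation is available). First I would refine the dyadic partition by $j_0$ levels so that each $Q_\az^k$ is a disjoint union of subcubes $Q_\az^{k,m}$, and then apply the identity $f(\cdot)=\sum_{k=0}^{\fz}Q_kf(\cdot)$ first in $L^2(X)$, extending to $f\in(\go{\bz,\gz})'$ by the standard density argument together with the size/regularity of the kernels of $Q_k$.

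For the high-frequency tail $\sum_{k=N+1}^{\fz}Q_kf$, I would discretize exactly as in the homogeneous case: write $Q_kf(x)=\sum_{\az,m}\int_{Q_\az^{k,m}}Q_k(x,y)Q_kf(y)\,d\mu(y)$, then split $Q_kf(y)=Q_kf(y_\az^{k,m})+[Q_kf(y)-Q_kf(y_\az^{k,m})]$; the main term gives the last line of \eqref{eq-idrf} with an initial choice of kernel $Q_k(\cdot,y_\az^{k,m})\mu(Q_\az^{k,m})$, and the remainder produces an error operator $R$ that is small on an appropriate test-function space. For the $k=0$ slot, where $Q_0$ has $\int_X Q_0(x,y)\,d\mu(y)=1$ rather than vanishing mean, I would use the \emph{average} $Q_{\az,1}^{0,m}(f)$ on $Q_\az^{0,m}$ instead of a point value, so that the corresponding kernel $\int_{Q_\az^{0,m}}\wz Q_0(\cdot,y)\,d\mu(y)$ inherits total integral $1$; for the intermediate levels $1\le k\le N$ (finitely many of them, needed only to absorb the ``bad'' interactions of the $k=0$ piece with nearby scales), I similarly use the averages $Q_{\az,1}^{k,m}(f)$.

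The central technical obstacle is constructing $\wz Q_k$ so that (a) the resulting series converges in $(\go{\bz,\gz})'$, (b) each $\wz Q_k$ satisfies the size/regularity bounds (i) and (ii) of Lemma \ref{lem-hdrf}, and (c) the integral conditions hold ($=1$ for $k\le N$, $=0$ for $k>N$). This is achieved by setting $\wz Q_k:=\sum_{\ell=0}^{\fz}Q_k R^\ell$ (a Neumann series), where $R$ is the error operator coming from the discretization step; the invertibility of $I-R$ rests on almost-orthogonality (Cotlar-type) estimates between $Q_j$ and $Q_k$ that decay geometrically in $|j-k|$, a standard consequence of the cancellation in (v) of Definition \ref{def-eti} combined with the H\"older regularity in (iii). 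Choosing $j_0$ sufficiently large makes the operator norm of $R$ on the relevant space of kernels small enough to sum the Neumann series, and the kernel representation of $Q_k R^\ell$ then automatically inherits the size and smoothness estimates of $P_\gz(\cdot,\cdot;\dz^k)$-type. The integral conditions on $\wz Q_k$ follow since $R$ is built from cancellative ingredients at the relevant levels, while at levels $k\le N$ the non-vanishing integral comes from the $Q_0$ factor.

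The hardest step, as always in this circle of ideas, is the quantitative control of the almost-orthogonality to prove smallness of $R$ without assuming reverse doubling on $\mu$ or the triangle inequality for $d$; here one must carefully exploit the exponential decay factor in (ii) of Definition \ref{def-eti} (in particular its $d(\cdot,\CY^k)$ piece) and the dyadic structure from Lemma \ref{lem-cube} to avoid any appeal to lower measure bounds on small balls. Once the kernel estimates for $\wz Q_k$ are in hand, the pairing of \eqref{eq-idrf} with any $g\in\go{\bz,\gz}$ converges absolutely by the standard $P_\gz$-type integral estimates, which gives the identity in $(\go{\bz,\gz})'$ as claimed.
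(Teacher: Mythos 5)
Note first that the paper does not prove this lemma at all: it is recalled verbatim from \cite{hlyy19} (where it is one of the main theorems), so there is no in-paper argument to compare against. Your sketch is, in outline, the same Coifman-type construction used in \cite{hlyy19}: start from $\sum_{k=0}^\fz Q_k=I$, discretize in space over the refined cubes $Q_\az^{k,m}$, use averages at the levels $k\le N$ and point values $Q_kf(y_\az^{k,m})$ for $k>N$, absorb the error operator $R$ by a Neumann series $(I-R)^{-1}=\sum_{\ell\ge 0}R^\ell$ justified by Cotlar-type almost-orthogonality, and read off the kernel bounds and integral conditions for $\wz Q_k$. So the route is the right one, not a new one.

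Two caveats on the substance. First, your account of the parameter $N$ is slightly off: $N$ is not merely a cutoff separating the non-cancellative level $k=0$ from the rest; in the actual construction both $N$ and $j_0$ are chosen large \emph{before} the Neumann series step, $N$ to make the off-diagonal (Cotlar) part of the remainder small and $j_0$ to make the spatial-discretization part small, and the smallness must be proved on the test-function class $\go{\bz,\gz}$ (equivalently on the class of kernels satisfying (i) and (ii) of Lemma \ref{lem-hdrf}), not just on $L^2(X)$. Second, and more importantly, everything you label as ``automatic'' or ``standard'' --- that $R$ maps test functions to test functions with small norm without any reverse-doubling assumption, that $(I-R)^{-1}$ preserves the size/regularity estimates uniformly in the arbitrary choice of the points $y_\az^{k,m}$, and that the integral of $\wz Q_k$ equals $1$ for $k\le N$ and $0$ for $k>N$ after composing with $(I-R)^{-1}$ --- is precisely the technical content of \cite{hlyy19}; as written, your proposal asserts these estimates rather than proving them, so it is an accurate roadmap of the known proof rather than a self-contained argument.
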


\begin{remark}\label{geo}
We should mention that, compared with the approximations of the identity and Calder\'on reproducing
formulae on   RD-spaces (see \cite{hmy08}), these exp-ATIs and
Calder\'on reproducing formulae on $X$ have
some essential differences presented via some terms such as
$$
\exp\left\{-\nu\left[\frac{\max\{d(x,\CY^k),d(y,\CY^k)\}}{\delta^k}\right]^a\right\}.
$$
Observe that here $x$, $y\in X$, $\CY^k$ is the set of dyadic reference points appearing
in Lemma \ref{lem-cube}, and $d(y,\CY^k)$ is the distance between $y$ and $\CY^k$. Moreover, by
Lemma \ref{lem-wave}, for any given $k\in\zz$ and $\az\in\CA_k$,  the wavelet $\psi_\az^k$ also has
an exponential decaying term
$$
\exp\lf\{-\nu\lf[\frac{d(x,y_\az^k)}{\dz^k}\r]^a\r\},
$$
where $y_\az^k$ is one of dyadic reference points which can be seen as the ``center'' of $\psi_{\az}^k$.
Thus, such terms
closely connect with the geometry of the given space $X$ of homogeneous type.
\end{remark}

At the end of this section, we list some useful inequalities which are widely used later in this article.
We begin with a known very basic inequality.

\begin{lemma}\label{lem-pin}
Let $p\in(0,1]$. Then, for any $\{a_k\}_{k=1}^\fz\subset\cc$,
$$
\lf(\sum_{k=1}^\fz |a_k|\r)^p\le\sum_{k=1}^\fz |a_k|^p.
$$
\end{lemma}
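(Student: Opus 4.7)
The plan is to reduce the infinite-sum inequality to the two-term case, extend by induction, and then pass to the limit by monotone convergence. The two-term inequality $(a+b)^p\le a^p+b^p$ for $a,\ b\in[0,\fz)$ and $p\in(0,1]$ is the essential content; once established, the rest is mechanical.

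First, I would dispose of trivialities by assuming $\sum_{k=1}^\fz|a_k|<\fz$, since otherwise the inequality reduces to $\fz\le\fz$ (or reads trivially when some $a_k=0$). For the two-term inequality, I would argue by scaling: if $a=0$ it is trivial, and otherwise, dividing by $a^p$, the claim $(a+b)^p\le a^p+b^p$ is equivalent to $(1+t)^p\le 1+t^p$ for $t:=b/a\in[0,\fz)$. I would then let $g(t):=1+t^p-(1+t)^p$ on $[0,\fz)$, observe $g(0)=0$, and compute
$$
g'(t)=p\lf[t^{p-1}-(1+t)^{p-1}\r]\ge 0\qquad\textup{for }t\in(0,\fz),
$$
because $p-1\le 0$ forces the function $s\mapsto s^{p-1}$ to be nonincreasing on $(0,\fz)$. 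Hence $g\ge 0$ on $[0,\fz)$, which gives the two-term inequality.

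Next, I would iterate: assuming $\lf(\sum_{k=1}^N|a_k|\r)^p\le\sum_{k=1}^N|a_k|^p$ for some $N\in\nn$, apply the two-term case with $a:=\sum_{k=1}^N|a_k|$ and $b:=|a_{N+1}|$ to obtain the same bound with $N+1$ in place of $N$. Finally, for the passage to infinity, I would note that both sides of the finite inequality are nondecreasing in $N$; letting $N\to\fz$ and using that the partial sums $\sum_{k=1}^N|a_k|$ converge (monotonically) to $\sum_{k=1}^\fz|a_k|$, together with continuity of $t\mapsto t^p$ on $[0,\fz)$, yields the desired inequality.

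There is essentially no obstacle here: the only mildly delicate point is the monotone passage to the infinite-sum limit, which is immediate from the continuity of $t\mapsto t^p$ on $[0,\fz)$ and the monotonicity of the partial sums. The argument does not require any of the structure of the space of homogeneous type and could be replaced by the one-line observation that $t\mapsto t^p$ is concave on $[0,\fz)$ with $0^p=0$, which forces subadditivity on $[0,\fz)^{\nn}$; I would prefer the scaling argument above for its directness.
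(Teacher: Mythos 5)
Your proof is correct: the scaling reduction to $(1+t)^p\le 1+t^p$, the derivative argument, the induction on the number of terms, and the monotone passage to the infinite sum are all sound, and the concavity remark at the end is an equally valid shortcut. The paper itself states this lemma as a known basic inequality and offers no proof at all, so there is nothing to compare against; your argument is the standard one and fills in exactly what the paper takes for granted.
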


The proofs of the following two lemmas are similar, respectively, to those
of \cite[Lemma 3.5]{whhy20} and \cite[Lemma 5.3]{hmy08}; we omit the details here.

\begin{lemma}\label{lem-bbes}
Let $\gamma \in (0,\fz)$ and $p \in (\omega_0/(\omega_0+\gamma),1]$ with $\omega_0$ as in \eqref{eq-updim}.
Then there exists a positive constant $C$ such that, for any $k,\ k'\in\zz$ and $x\in X$,
\begin{equation*}
\sum_{\alpha\in\CG_k}\mu\lf(Q_\az^{k+1}\r)\lf[P_\gz\lf(x,y_\az^k;\dz^{k\wedge k'}\r)\r]^p
\le C\lf[V_{\delta^{k\wedge k'}}(x)\r]^{1-p}.
\end{equation*}
\end{lemma}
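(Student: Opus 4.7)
The plan is to reduce the sum to a geometric series in annular regions around $x$ and then extract the right power of the doubling dimension. Set $r:=\dz^{k\wedge k'}$; note that, since $k+1>k\ge k\wedge k'$, we have $\dz^{k+1}\le\dz r$, so by Lemma \ref{lem-cube}(v) each cube $Q_\az^{k+1}$ lies in a ball of radius $\le C^\natural\dz^{k+1}\le C^\natural r$ around its center $y_\az^k$. I would first peel off the central piece $A_0:=\{\az\in\CG_k:\ d(x,y_\az^k)<r\}$ and then, for each $j\in\nn$, handle the annulus $A_j:=\{\az\in\CG_k:\ 2^{j-1}r\le d(x,y_\az^k)<2^j r\}$.

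For $\az\in A_j$ one has, from the definition of $P_\gz$ and the doubling condition, the elementary bound
\begin{equation*}
P_\gz(x,y_\az^k;r)\ls\f{1}{V_{2^jr}(x)}2^{-j\gz},
\end{equation*}
whereas by Lemma \ref{lem-cube}(iii) the cubes $\{Q_\az^{k+1}\}_{\az\in A_j}$ are pairwise disjoint and, since $\dz^{k+1}\le r$, all contained in $B(x,C2^jr)$ for a uniform constant $C$; consequently
\begin{equation*}
\sum_{\az\in A_j}\mu\lf(Q_\az^{k+1}\r)\le\mu\lf(B(x,C2^jr)\r)\ls V_{2^jr}(x).
\end{equation*}
Combining the two estimates gives
\begin{equation*}
\sum_{\az\in A_j}\mu\lf(Q_\az^{k+1}\r)\lf[P_\gz(x,y_\az^k;r)\r]^p\ls 2^{-jp\gz}\lf[V_{2^jr}(x)\r]^{1-p}.
\end{equation*}

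To close the argument I would invoke \eqref{eq-doub} in the form $V_{2^jr}(x)\le C2^{j\omega}V_r(x)$ for any $\omega>\omega_0$, which yields
\begin{equation*}
\sum_{\az\in\CG_k}\mu\lf(Q_\az^{k+1}\r)\lf[P_\gz(x,y_\az^k;r)\r]^p
\ls\lf[V_r(x)\r]^{1-p}\sum_{j=0}^{\fz}2^{-j[p\gz-(1-p)\omega]}.
\end{equation*}
Since $p>\omega_0/(\omega_0+\gz)$, there exists $\omega$ slightly larger than $\omega_0$ for which $p(\omega+\gz)>\omega$, i.e.\ $p\gz-(1-p)\omega>0$, and the geometric series converges to a constant independent of $k$, $k'$, and $x$.

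The only point that needs care is the choice of $\omega$ in the doubling estimate: one must take $\omega$ strictly greater than $\omega_0$ while still satisfying $p>\omega/(\omega+\gz)$; the strict inequality in the hypothesis $p>\omega_0/(\omega_0+\gz)$ is precisely what makes this possible. Otherwise the argument is routine annular summation plus doubling, and the disjointness granted by Lemma \ref{lem-cube}(iii) removes any bookkeeping about overlapping cubes.
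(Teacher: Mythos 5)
Your proof is correct: the annular decomposition of $\CG_k$ around $x$ at scale $\dz^{k\wedge k'}$, the disjointness of the level-$(k+1)$ cubes $Q_\az^{k+1}$ inside $B(x,C2^j\dz^{k\wedge k'})$, the doubling bound $V_{2^j\dz^{k\wedge k'}}(x)\ls 2^{j\omega}V_{\dz^{k\wedge k'}}(x)$, and the choice of $\omega$ slightly above $\omega_0$ so that $p\gz-(1-p)\omega>0$ are exactly the ingredients needed, and the constants depend only on the structural parameters, not on $k$, $k'$, or $x$. The paper omits its own proof of Lemma \ref{lem-bbes}, deferring to the analogous \cite[Lemma 3.5]{whhy20}, and your argument is essentially that standard one.
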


\begin{lemma}\label{lem-btl}
Let $\gz\in (0,\infty)$ and $r\in (\omega/(\omega+\gamma),1]$ with $\omega$ as in \eqref{eq-doub}. Then
there exists a positive constant $C$ such that, for any $k,\ k'\in\zz$, $x\in X$, and
$\{a_\alpha^{k}:\ k\in\zz,\ \az\in\CG_k\}\subset\cc$,
\begin{equation}\label{eq-btl}
\sum_{\az\in\CG_k}\mu\left(Q_\az^{k+1}\right)P_\gz\lf(x,y_\az^k;\dz^{k\wedge k'}\r)\lf|a_\az^k\r|
\le C \delta^{[(k\wedge k')-k]\omega(1/r-1)}\lf[\CM\left(\sum_{\az\in\CG_k}\lf|a_\az^{k}\r|^r
\mathbf 1_{Q_\az^{k+1}}\r)(x)\r]^{1/r}.
\end{equation}
\end{lemma}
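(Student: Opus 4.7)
The plan is to linearize the sum by the standard ``average'' trick, pass to an integral against an auxiliary function, and then bound the integral by the Hardy--Littlewood maximal function via an annular decomposition; the restriction $r>\omega/(\omega+\gz)$ appears exactly as the convergence threshold of the resulting geometric series.

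\textbf{Step 1 (sub-additivity and averaging).} Set $F:=\sum_{\bz\in\CG_k}|a_{\bz}^{k}|^{r}\mathbf 1_{Q_{\bz}^{k+1}}$, which is well-defined because $\{Q_\bz^{k+1}\}_{\bz\in\CG_k}$ is a disjoint partition of $X$ (up to a null set) by Lemma~\ref{lem-cube}(iii). Let $I$ denote the left-hand side of \eqref{eq-btl}. Since $r\in(0,1]$, Lemma~\ref{lem-pin} gives
$$
I^{r}\le\sum_{\az\in\CG_k}\big[\mu\lf(Q_\az^{k+1}\r)\big]^{r}\big[P_\gz\lf(x,y_\az^k;\dz^{k\wedge k'}\r)\big]^{r}\lf|a_\az^k\r|^{r}.
$$
Rewriting $|a_\az^k|^{r}=\mu(Q_\az^{k+1})^{-1}\int_{Q_\az^{k+1}}F(y)\,d\mu(y)$ and using the stability statements that for $y\in Q_\az^{k+1}$ one has $d(y,y_\az^k)\le C\dz^{k+1}\le C\dz^{k\wedge k'}$, so that (by Lemma~\ref{lem-cube}(v), \eqref{eq-doub} and the elementary estimate $V_R(x)+V(x,y)\sim V_R(y)+V(x,y)$) both
$$
\mu\lf(Q_\az^{k+1}\r)\sim V_{\dz^{k}}(y)\qquad\text{and}\qquad P_\gz\lf(x,y_\az^k;\dz^{k\wedge k'}\r)\sim P_\gz\lf(x,y;\dz^{k\wedge k'}\r),
$$
I reassemble the sum into an integral:
$$
I^{r}\ls\int_{X}\big[V_{\dz^{k}}(y)\big]^{r-1}\big[P_\gz\lf(x,y;\dz^{k\wedge k'}\r)\big]^{r}F(y)\,d\mu(y).
$$

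\textbf{Step 2 (extracting the scale factor and annular decomposition).} Because $k\ge k\wedge k'$, the doubling property \eqref{eq-doub} applied to the balls $B(y,\dz^{k})\subset B(y,\dz^{k\wedge k'})$ yields $V_{\dz^{k\wedge k'}}(y)\le C\dz^{[(k\wedge k')-k]\omega}V_{\dz^{k}}(y)$, so that, since $r-1\le 0$,
$$
\big[V_{\dz^{k}}(y)\big]^{r-1}\le C\,\dz^{[(k\wedge k')-k]\omega(1-r)}\big[V_{\dz^{k\wedge k'}}(y)\big]^{r-1}.
$$
Set $R:=\dz^{k\wedge k'}$ and decompose $X=A_{0}\cup\bigcup_{j\ge1}A_{j}$, where $A_{0}:=B(x,R)$ and $A_{j}:=B(x,\dz^{-j}R)\setminus B(x,\dz^{-j+1}R)$. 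On $A_{j}$ one has $d(x,y)\sim\dz^{-j}R$, hence $R/(R+d(x,y))\sim\dz^{j}$ and $V_{R}(x)+V(x,y)\sim V_{\dz^{-j}R}(x)$; moreover, using doubling from $B(y,R)$ to $B(y,\dz^{-j}R)$ and that $V_{\dz^{-j}R}(y)\sim V_{\dz^{-j}R}(x)$, I obtain $V_{R}(y)\gs\dz^{j\omega}V_{\dz^{-j}R}(x)$. Combining these estimates,
$$
\big[V_{R}(y)\big]^{r-1}\big[P_\gz(x,y;R)\big]^{r}\ls\dz^{j[r\gz+\omega(r-1)]}\big[V_{\dz^{-j}R}(x)\big]^{-1}\qquad(y\in A_j).
$$

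\textbf{Step 3 (maximal function and summation).} Integrating $F$ over $A_{j}\subset B(x,\dz^{-j}R)$ gives $\int_{A_{j}}F\,d\mu\le V_{\dz^{-j}R}(x)\,\CM(F)(x)$, so the $j$-th annular contribution is controlled by $\dz^{j[r\gz+\omega(r-1)]}\CM(F)(x)$. The exponent $r\gz+\omega(r-1)=r(\omega+\gz)-\omega$ is \emph{strictly positive} precisely under the hypothesis $r\in(\omega/(\omega+\gz),1]$, so the geometric series in $j$ converges. Putting everything together and taking $r$-th roots produces
$$
I\ls\dz^{[(k\wedge k')-k]\omega(1/r-1)}\big[\CM(F)(x)\big]^{1/r},
$$
which is exactly \eqref{eq-btl}.

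\textbf{Main obstacle.} The delicate point is the bookkeeping in Step~2: one must carefully track that the factor $\dz^{[(k\wedge k')-k]\omega(1-r)}$ comes out of $V_{\dz^{k}}(y)^{r-1}$ (with the correct sign, since $r-1\le 0$), and simultaneously match the two scales $\dz^{k}$ (appearing in $\mu(Q_\az^{k+1})$) and $\dz^{k\wedge k'}$ (appearing in $P_\gz$). Once this scaling is handled, the sharp range of $r$ follows naturally from the summability threshold $r(\omega+\gz)>\omega$, and, as indicated in the paper, the remaining estimates are parallel to those in \cite[Lemma~5.3]{hmy08}.
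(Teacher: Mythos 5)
Your proof is correct and follows essentially the same route as the argument the paper points to (\cite[Lemma 5.3]{hmy08}): the $r$-power trick from Lemma \ref{lem-pin}, replacing $|a_\az^k|^r$ by its average over $Q_\az^{k+1}$ so the sum becomes an integral, and an annular decomposition at scale $\dz^{k\wedge k'}$ whose geometric series converges precisely because $r(\omega+\gz)>\omega$, with the factor $\dz^{[(k\wedge k')-k]\omega(1/r-1)}$ extracted correctly from $[V_{\dz^k}(y)]^{r-1}$. One cosmetic slip: $\{Q_\az^{k+1}\}_{\az\in\CG_k}$ is pairwise disjoint but not a partition of $X$ (since $\CG_k\subsetneq\CA_{k+1}$); this is harmless, as your argument only needs disjointness and the bound $\sum_{\az\in\CG_k}\int_{Q_\az^{k+1}}\le\int_X$ for a non-negative integrand.
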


Finally, we recall the following Fefferman--Stein vector-valued maximal inequality on $X$ obtained by Grafakos
et al. \cite{gly09}.

\begin{lemma}\label{lem-fs}
Let $p\in(1,\fz)$ and $u\in(1,\fz]$. Then there exists a positive constant $C$ such that, for any
sequence $\{f_j\}_{j=1}^\fz$ of measurable functions,
$$
\lf\|\lf\{\sum_{j=1}^\fz[\CM(f_j)]^u\r\}^{1/u}\r\|_{L^p(X)}
\le C\lf\|\lf(\sum_{j=1}^\fz|f_j|^u\r)^{1/u}\r\|_{L^p(X)}
$$
with the usual modification made when $u=\fz$.
\end{lemma}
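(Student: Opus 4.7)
The plan is to follow the classical strategy of Fefferman and Stein, adapted to the space $(X,d,\mu)$ of homogeneous type via the dyadic cube system of Lemma \ref{lem-cube} and the scalar $L^p$-boundedness of $\CM$ recalled in Section \ref{s-pre}. The argument splits into two easy endpoint cases, a main weak-type $(1,1)$ inequality for the vector-valued operator followed by Marcinkiewicz interpolation, and a duality step to cover the range $p>u$.

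The easy cases are $u=\fz$ and $u=p$. For $u=\fz$, the pointwise bound $\sup_j\CM(f_j)(x)\le\CM(\sup_j|f_j|)(x)$, which follows from monotonicity of averages, reduces the claim to the scalar $L^p(X)$-boundedness of $\CM$. For $u=p$ one writes
$$
\lf\|\lf(\sum_j[\CM(f_j)]^p\r)^{1/p}\r\|_{L^p(X)}^p=\sum_j\|\CM(f_j)\|_{L^p(X)}^p\ls\sum_j\|f_j\|_{L^p(X)}^p=\lf\|\lf(\sum_j|f_j|^p\r)^{1/p}\r\|_{L^p(X)}^p
$$
by Fubini and the scalar estimate.

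The substantive step is a weak-type $(1,1)$ bound for the sublinear, vector-valued operator $T(\{f_j\}):=\{\CM(f_j)\}$ from $L^1(X;\ell^u)$ to $L^{1,\fz}(X;\ell^u)$, that is,
$$
\mu\lf(\lf\{x\in X:\ \lf(\sum_j[\CM(f_j)(x)]^u\r)^{1/u}>\lz\r\}\r)\le\frac{C}{\lz}\int_X\lf(\sum_j|f_j(y)|^u\r)^{1/u}\,d\mu(y).
$$
I would prove this by performing a Calder\'on--Zygmund decomposition of the scalar envelope $F:=(\sum_j|f_j|^u)^{1/u}$ at level $\lz$ using the dyadic cube system of Lemma \ref{lem-cube}: outside the union of the stopping cubes, $F\le C\lz$ almost everywhere, so the strong $(u,u)$ estimate applied to the good parts controls their contribution; on the stopping cubes one writes each $f_j$ as good-plus-bad parts with cancellation against constants, invokes Minkowski's inequality in $\ell^u$, and exploits the doubling property \eqref{eq-doub} to absorb the mass of the bad parts inside a fixed dilate of the cubes while estimating $\CM(b_j)$ outside via the quasi-metric. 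Marcinkiewicz interpolation between this weak $(1,1)$ estimate and the strong $(u,u)$ estimate then yields the strong $(p,p)$ bound for all $p\in(1,u]$.

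For the remaining range $p\in(u,\fz)$, a duality argument closes the proof: one dualizes the $L^{p/u}(X)$ norm of $\sum_j[\CM(f_j)]^u$ against a nonnegative $g\in L^{(p/u)'}(X)$ and uses a linearization-plus-adjoint trick combined with the scalar maximal inequality at the exponent $(p/u)'>1$ to reduce to the already-established regime. The main technical obstacle is the Calder\'on--Zygmund decomposition step: in the Euclidean setting it rests on the bisection of cubes and the triangle inequality, but here one has to work solely with the dyadic system of Hyt\"onen--Kairema together with the quasi-metric constant $A_0$ and the doubling bound \eqref{eq-doub}. All of these tools are available in the present setting, and the resulting proof---carried out by Grafakos, Liu, and Yang in \cite{gly09}---is a routine but notation-heavy adaptation of the Euclidean argument.
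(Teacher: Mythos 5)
You should first note that the paper does not prove Lemma \ref{lem-fs} at all: it is recalled from Grafakos, Liu and Yang \cite{gly09}, so there is no internal argument to match, and the proof in \cite{gly09} in fact proceeds through Banach-space-valued Calder\'on--Zygmund theory on spaces of homogeneous type, applied to a Lipschitz-regularized family of averaging operators that dominates $\CM$ --- not through the decomposition you sketch. Your frame is otherwise reasonable: the cases $u=\fz$ and $u=p$ are correct, Marcinkiewicz interpolation of an $\ell^u$-valued sublinear operator is legitimate, and duality is indeed how the range $p\in(u,\fz)$ is handled; but note that this duality step requires the weighted inequality $\int_X[\CM(f)]^u g\,d\mu\ls\int_X|f|^u\CM(g)\,d\mu$ for $u\in(1,\fz)$ (false for $u=1$), which is itself a Fefferman--Stein-type lemma needing a covering argument plus interpolation, not merely the $L^{(p/u)'}(X)$-boundedness of $\CM$.

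The genuine gap is in the central step, the vector-valued weak $(1,1)$ estimate. Cancellation of the bad pieces against constants is of no use for $\CM$, because $\CM$ averages $|b_j|$, so the mean-zero property of $b_{j,i}$ never enters --- this is the decisive difference from the singular-integral case. What your size estimates and Minkowski's inequality then yield, for $x$ outside the union of the dilated stopping cubes $Q_i$, is a bound of the form $\|\{\CM(b_j)(x)\}_j\|_{\ell^u}\ls\lz\sum_i\mu(Q_i)/\mu(B(c_i,d(x,c_i)))$, and this is too weak to conclude: the sum has exactly critical decay, its integral over the complement of the dilated cubes diverges, and even its level set at a fixed height can exceed $\sum_i\mu(Q_i)$ by an arbitrarily large factor (logarithmic in the number of cubes; take many small, moderately separated cubes whose tails pile up on a set far larger than their union). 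So neither Chebyshev nor a direct level-set bound closes the estimate, and the approach ``CZ decomposition $+$ cancellation $+$ quasi-metric size bounds'' cannot be completed as described. The standard repair --- and what \cite{gly09} actually does --- is to replace the indicator-of-ball averages by averaging operators with Lipschitz kernels comparable to $\CM$, view $\{f_j\}\mapsto\{A_kf_j\}_{k,j}$ as a vector-valued Calder\'on--Zygmund operator whose kernel satisfies a H\"ormander-type condition with respect to $d$ and $\mu$ (here regularity of the kernel, not cancellation of $b_j$, supplies the extra decay), and quote the Banach-valued CZ theorem for the weak $(1,1)$. Without some such device, your key step fails.
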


\section{Wavelet characterization of Besov and Triebel--Lizorkin spaces}\label{s-wave}

In this section, we establish the wavelet characterization of homogeneous Besov and Triebel--Lizorkin
spaces. To this end, we first recall the notion of Besov and Triebel--Lizorkin spaces. In what follows,
we assume that $\mu(X)=\fz$ and, for any $s\in(-1,1)$ and $\ez\in(0,1]$, let
\begin{equation*}
p(s,\ez):=\max\lf\{\frac{\omega_0}{\omega_0+\ez},\frac{\omega_0}{\omega_0+s+\ez}\r\},
\end{equation*}
where $\omega_0$ is the same as in \eqref{eq-updim}.

\begin{definition}\label{def-btl}
Let $\{Q_k\}_{k=-\fz}^\fz$ be an exp-ATI, and $s\in(-\eta,\eta)$ with $\eta$ as in Definition \ref{def-eti}.
Suppose $p\in(p(s,\eta),\fz]$, $q\in(0,\fz]$, and $\bz$ and $\gz$ satisfy
\begin{equation}\label{eq-bzgz}
\bz\in\lf(\max\left\{0,-s+\omega_0\lf(\frac{1}{p}-1\r)_+\right\},\eta\r)
\quad\hbox{and}\quad
\gz\in\lf(\max\left\{s,\omega_0\lf(\frac{1}{p}-1\r)_+\right\},\eta\r)
\end{equation}
with $\omega_0$ as in \eqref{eq-updim}.
\begin{enumerate}
\item If $s\in(-(\bz\wedge\gz),\bz\wedge\gz)$, $p\in(p(s,\bz\wedge\gz),\fz]$, and $q\in(0,\fz]$,
then the \emph{homogeneous Besov space $\dot B^s_{p,q}(X)$} is defined by setting
$$
\dot B^s_{p,q}(X):=\lf\{f\in\lf(\GOO{\bz,\gz}\r)':\ \|f\|_{\dot B^s_{p,q}(X)}:=
\lf[\sum_{k=-\fz}^\fz\dz^{-ksq}\|Q_kf\|_{L^p(X)}^q\r]^{1/q}<\fz\r\}
$$
with the usual modifications made when $p=\fz$ or $q=\fz$.

\item If $s\in(-(\bz\wedge\gz),\bz\wedge\gz)$, $p\in(p(s,\bz\wedge\gz),\fz)$, and
$q\in(p(s,\bz\wedge\gz),\fz]$, then the \emph{homogeneous Triebel--Lizorkin space $\dot F^s_{p,q}(X)$}
is defined by setting
$$
\dot F^s_{p,q}(X):=\lf\{f\in\lf(\GOO{\bz,\gz}\r)':\ \|f\|_{\dot F^s_{p,q}(X)}:=
\lf\|\lf(\sum_{k=-\fz}^\fz\dz^{-ksq}|Q_kf|^q\r)^{1/q}\r\|_{L^p(X)}<\fz\r\}
$$
with the usual modification made when $q=\fz$.
\end{enumerate}
\end{definition}

It was proved in \cite{whhy20}  that these spaces are independent of the choices of
$\bz$ and $\gz$ as in \eqref{eq-bzgz}, and exp-ATIs (see \cite[Remark 3.13]{whhy20}), which makes Definition \ref{def-btl}
well defined.

Now, we state our main results in this section, namely, the
wavelet characterization of $\dot B^s_{p,q}(X)$ and $\dot F^s_{p,q}(X)$.
In what follows, for any dyadic cube $Q$,
let $\wz{\mathbf 1}_{Q}:=\mathbf 1_{Q}/[\mu(Q)]^{1/2}$.

\begin{theorem}\label{thm-wavebtl}
Let $s\in(-\eta,\eta)$ with $\eta$ as in Definition \ref{def-eti}. The following two statements hold true.
\begin{enumerate}
\item If $s$, $p$, $q$, $\bz$, and $\gz$ are as in Definition \ref{def-btl}(i),
then $f\in\dot B^s_{p,q}(X)$ if and only if $f\in(\GOO{\bz,\gz})'$ and
$$
\|f\|_{\dot B^s_{p,q}(\mathrm{w},X)}:=\lf\{\sum_{k\in\zz}\dz^{-ksq}
\lf[\sum_{\az\in\CG_k}\lf[\mu\lf(Q_\az^{k+1}\r)\r]^{1-p/2}\lf|\lf<f,\psi_\az^k\r>\r|^p
\r]^{q/p}\r\}^{1/q}<\fz
$$
with the usual modifications made when $p=\fz$ or $q=\fz$.
Moreover, there exists a constant $C\in[1,\fz)$ such that, for any $f\in(\GOO{\bz,\gz})'$,
$C^{-1}\|f\|_{\dot B^s_{p,q}(X)}\le\|f\|_{\dot B^s_{p,q}(\mathrm{w},X)}\le C\|f\|_{\dot B^s_{p,q}(X)}$.

\item If $s$, $p$, $q$, $\bz$, and $\gz$ are as in Definition \ref{def-btl}(ii),
then $f\in\dot F^s_{p,q}(X)$ if and only if $f\in(\GOO{\bz,\gz})'$ and
$$
\|f\|_{\dot F^s_{p,q}(\mathrm{w},X)}:=\lf\|\sum_{k\in\zz}\dz^{-ksq}\lf(\sum_{\az\in\CG_k}\lf|
\lf<f,\psi_\az^k\r>\wz{\mathbf 1}_{Q_\az^{k+1}}\r|^q\r)^{1/q}\r\|_{L^p(X)}<\fz
$$
with the usual modification made when $q=\fz$.
Moreover, there exists a constant $C\in[1,\fz)$ such that, for any $f\in(\GOO{\bz,\gz})'$,
$C^{-1}\|f\|_{\dot F^s_{p,q}(X)}\le\|f\|_{\dot F^s_{p,q}(\mathrm{w},X)}\le C\|f\|_{\dot F^s_{p,q}(X)}$.
\end{enumerate}
\end{theorem}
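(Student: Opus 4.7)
The plan is to establish the equivalence $\|f\|_{\dot B^s_{p,q}(X)}\sim\|f\|_{\dot B^s_{p,q}(\mathrm w,X)}$ and its Triebel--Lizorkin counterpart through an almost-orthogonality pairing between the exp-ATI kernels $\{Q_k,\wz Q_k\}_k$ (Lemma \ref{lem-hdrf}) and the wavelets $\{\psi_\az^k\}_{k,\az}$ (Lemma \ref{lem-wave}). The central technical estimate I would first prove is
\begin{align*}
\lf|\lf<\wz Q_{k'}(\cdot,y),\psi_\az^k\r>\r|+\lf|Q_{k'}\psi_\az^k(y)\r|\ls\dz^{|k-k'|\eta'}\lf[\mu\lf(Q_\az^{k+1}\r)\r]^{1/2}P_\gz\lf(y,y_\az^k;\dz^{k\wedge k'}\r)
\end{align*}
for some $\eta'\in(0,\eta)$ and any admissible $\gz$ in the window \eqref{eq-bzgz}. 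When $k\ge k'$ this follows by pairing the cancellation of $\psi_\az^k$ (Lemma \ref{lem-wave}(iii)) against the H\"older regularity of $\wz Q_{k'}$ or $Q_{k'}$ (Lemma \ref{lem-hdrf}(ii) or Definition \ref{def-eti}(iii)); in the opposite case the cancellation of the exp-ATI kernel (Lemma \ref{lem-hdrf}(iii) and Definition \ref{def-eti}(v)) is paired against the H\"older regularity of the wavelet (Lemma \ref{lem-wave}(ii)). The resulting spatial integral is then controlled by splitting into annuli around $y_\az^k$ and exploiting the exponential decay of $\psi_\az^k$ together with the doubling property \eqref{eq-doub}.

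For the direction $\|f\|_{\dot B^s_{p,q}(\mathrm w,X)}\ls\|f\|_{\dot B^s_{p,q}(X)}$, I would plug the discrete Calder\'on reproducing formula of Lemma \ref{lem-hdrf} into $\lf<f,\psi_\az^k\r>$ to obtain
\begin{align*}
\lf<f,\psi_\az^k\r>=\sum_{k'\in\zz}\sum_{\az'\in\CA_{k'}}\sum_{m=1}^{N(k',\az')}\mu\lf(Q_{\az'}^{k',m}\r)\lf<\wz Q_{k'}\lf(\cdot,y_{\az'}^{k',m}\r),\psi_\az^k\r>Q_{k'}f\lf(y_{\az'}^{k',m}\r).
\end{align*}
Substituting the almost-orthogonality bound, invoking Lemma \ref{lem-pin} when $p\le 1$, and pointwise-dominating the inner sum over $(\az',m)$ via \eqref{eq-btl}, one rewrites the $\ell^p$-sum in $\az$ in terms of $\|Q_{k'}f\|_{L^p(X)}$ in the Besov case, and---after invoking the Fefferman--Stein inequality (Lemma \ref{lem-fs})---in terms of $\|(\sum_{k'}\dz^{-k'sq}|Q_{k'}f|^q)^{1/q}\|_{L^p(X)}$ in the Triebel--Lizorkin case. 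The remaining $\ell^q$-sum over $k$ is absorbed via a discrete Young-type inequality exploiting the geometric decay $\dz^{|k-k'|\eta'}$.

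For the reverse direction $\|f\|_{\dot B^s_{p,q}(X)}\ls\|f\|_{\dot B^s_{p,q}(\mathrm w,X)}$, I would first justify, by exploiting the unconditional $L^2$-basis property of $\{\psi_\az^k\}$ (Lemma \ref{lem-wave}) together with a density argument in $\GOO{\bz,\gz}$, the distributional identity
\begin{align*}
Q_kf(\cdot)=\sum_{k'\in\zz}\sum_{\az'\in\CG_{k'}}\lf<f,\psi_{\az'}^{k'}\r>Q_k\psi_{\az'}^{k'}(\cdot)\quad\textup{in }(\GOO{\bz,\gz})'.
\end{align*}
The symmetric almost-orthogonality estimate then dominates $|Q_kf(x)|$ by a discrete convolution of the normalized coefficients $|\lf<f,\psi_{\az'}^{k'}\r>|[\mu(Q_{\az'}^{k'+1})]^{-1/2}\mathbf 1_{Q_{\az'}^{k'+1}}(x)$ against $P_\gz$-type kernels with gain $\dz^{|k-k'|\eta'}$; Lemmas \ref{lem-bbes} and \ref{lem-btl}, combined with Lemma \ref{lem-fs} in the $\dot F^s_{p,q}$-case, then close the estimate as in the first direction.

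The main obstacle, as I see it, is keeping the entire argument free of the reverse doubling of $\mu$ while absorbing the exp-type terms in Lemmas \ref{lem-hdrf} and \ref{lem-wave}. The key device is to never replace $[V_{\dz^k}(y_\az^k)]^{-1/2}$ by a pure power of $\dz^k$ (as one would on an RD-space), but instead to carry the full $P_\gz$-profile around $y_\az^k$ and rely only on the $\CY^k$-based summation Lemmas \ref{lem-bbes} and \ref{lem-btl}; the exponential decays of $\psi_\az^k$ and of $Q_k$ then contribute only through trivial localization to balls of radius $\sim\dz^{k\wedge k'}$. Additional care is required when $p\le 1$ or $q\le 1$, where Lemma \ref{lem-pin} together with the parameter windows in \eqref{eq-bzgz} is essential for the absolute convergence of the arising series and for the equivalence of the discrete and pointwise $P_\gz$-representations.
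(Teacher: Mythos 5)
Your proposal is correct in substance, and for the direction $\|f\|_{\dot B^s_{p,q}(\mathrm w,X)}\ls\|f\|_{\dot B^s_{p,q}(X)}$ (and its $\dot F$-analogue) it coincides with the paper's argument: plug the discrete Calder\'on reproducing formula of Lemma \ref{lem-hdrf} into $\langle f,\psi_{\az}^k\rangle$, use almost-orthogonality estimates of the type $\dz^{(k-k_0)\bz}$ resp.\ $\dz^{(k_0-k)\gz}$ times $[\mu(Q_{\az_0}^{k_0+1})]^{1/2}P_\gz(\cdot)$, and close with Lemmas \ref{lem-pin}, \ref{lem-bbes}, \ref{lem-btl}, and \ref{lem-fs}. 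Where you genuinely diverge is the converse direction. You expand $f$ in the wavelet system and estimate $Q_kf=\sum_{k'}\sum_{\az'\in\CG_{k'}}\langle f,\psi_{\az'}^{k'}\rangle Q_k\psi_{\az'}^{k'}$ for a \emph{general} exp-ATI, paying the price of a cross-scale sum with gain $\dz^{|k-k'|\eta'}$; the paper instead observes that $D_k(x,y):=\sum_{\az\in\CG_k}\psi_\az^k(x)\psi_\az^k(y)$ is itself an exp-ATI and invokes the independence of $\dot B^s_{p,q}(X)$ and $\dot F^s_{p,q}(X)$ of the choice of exp-ATI (\cite[Remark 3.13]{whhy20}), so that by orthonormality $D_kf=\sum_{\az\in\CG_k}\langle f,\psi_\az^k\rangle\psi_\az^k$ collapses to a single scale, and only the exponential decay of the wavelets plus Lemma \ref{lem-btl} and Lemma \ref{lem-fs} are needed. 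Your route avoids citing the exp-ATI-independence result but requires the wavelet reproducing formula for distributions in $(\GOO{\bz,\gz})'$; the paper's route trades that for the independence result and is technically lighter (no second family of almost-orthogonality estimates, no discrete Young-type summation in that direction).

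One caveat: justifying your identity $Q_kf=\sum_{k'}\sum_{\az'}\langle f,\psi_{\az'}^{k'}\rangle Q_k\psi_{\az'}^{k'}$ in $(\GOO{\bz,\gz})'$ by ``the unconditional $L^2$-basis property plus a density argument'' is too quick: one needs convergence of the wavelet expansion of test functions in the topology of $\GOO{\bz,\gz}$ (equivalently, the distributional wavelet reproducing formula), which is a nontrivial theorem rather than a soft density fact; it is, however, available in the cited literature (the wavelet reproducing formulae of \cite{hlw18}), so this is a missing citation/justification rather than a fatal gap. Also, your symmetric gain $\dz^{|k-k'|\eta'}$ should really be the asymmetric pair $\dz^{(k-k')\bz}$ for $k\ge k'$ and $\dz^{(k'-k)\gz}$ for $k<k'$, since it is exactly the window \eqref{eq-bzgz} (namely $\bz+s>\omega(1/r-1)$ and $\gz>s$ for admissible $r$) that makes the two one-sided series summable; with that bookkeeping your argument closes as claimed.
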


\begin{proof}
Due to similarity, we only prove (ii). To this end, let $s$, $p$, and $q$ be as in (ii), and
$\omega\in[\omega_0,\fz)$
satisfy \eqref{eq-doub} and all the assumptions of (ii) with $\omega_0$
replaced by $\omega$. We first show the sufficiency of (ii).
For this purpose, let $f\in(\GOO{\bz,\gz})'$ with $\bz$
and $\gz$ as in (ii), and $\|f\|_{\dot F^s_{p,q}(\mathrm{w},X)}<\fz$ with
$s$, $p$, and $q$ as in (ii). Note that, by \cite[Proposition 3.12]{whhy20}, we find that $\dot B^s_{p,q}(X)$ is
independent of the choice of exp-ATIs. On the other hand, if we define, for any $k\in\zz$ and $x,\ y\in X$,
\begin{equation}\label{3.1x}
D_k(x,y):=\sum_{\az\in\CG_k}\psi_\az^k(x)\psi_\az^k(y)
\end{equation}
pointwisely, then $\{D_k\}_{k=-\fz}^\fz$ is an exp-ATI (see, for instance, \cite[Remark 2.9(i)]{hlyy19}). Thus, we may choose
$\{Q_k\}_{k=-\fz}^\fz$ in Definition \ref{def-btl} just to be $\{D_k\}_{k=-\fz}^\fz$
in \eqref{3.1x}. By the orthonormality of
$\{D_k\}_{k=-\fz}^\fz$ (see, for instance, \cite[Theorem 7.1]{ah13}), we find that, for any $k\in\zz$ and $x\in X$,
$$
D_{k}f(x)=\sum_{j=-\fz}^\fz\sum_{\az\in\CG_j}\lf<f,\psi_\az^{j}\r>D_k\psi_\az^{j}(x)=\sum_{\az\in\CG_k}
\lf<f,\psi_\az^k\r>\psi_\az^k(x).
$$
From this and Lemma \ref{lem-btl}, we deduce that,
for any $k\in\zz$ and $x\in X$,
\begin{align}\label{eq-wavetl1}
\dz^{-ksq}|D_{k}f(x)|^q&\ls\dz^{-ksq}\lf[\sum_{\az\in\CG_{k}}\lf[\mu\lf(Q_\az^{k+1}\r)\r]^{-1/2}
\lf|\lf<f,\psi_\az^k\r>\r|\exp\lf\{-\nu\lf[\frac{d(x,y_\az^k)}{\dz^k}\r]^a\r\}\r]^q\noz\\
&\ls\dz^{-ksq}\lf\{\sum_{\az\in\CG_k}\mu\lf(Q_\az^{k+1}\r)P_\Gamma\lf(x,y_\az^k;\dz^k\r)\lf[\mu\lf(Q_\az^{k+1}\r)\r]^{-1/2}
\lf|\lf<f,\psi_\az^k\r>\r|\r\}^q\noz\\
&\ls\lf[\CM\lf(\dz^{-ksr}\sum_{\az\in\CG_k}\lf|\lf<f,\psi_\az^k\r>\wz{\mathbf 1}_{Q_\az^{k+1}}\r|^r\r)(x)\r]^{q/r},
\end{align}
where $\Gamma\in(0,\fz)$ is determined later and $r\in(\omega/(\omega+\Gamma),1]$.
Choose $\Gamma$ sufficiently large such that
$r\in(0,\min\{p,q\})$. Then, from \cite[(3.21)]{whhy20}, \eqref{eq-wavetl1}, and Lemma \ref{lem-fs}, we deduce that
\begin{align*}
\|f\|_{\dot F^s_{p,q}(X)}&\sim\lf\|\lf(\sum_{k\in\zz}\dz^{ksq}|D_kf|^q\r)^{1/q}\r\|_{L^p(X)}
\ls\lf\|\lf\{\sum_{k\in\zz}\lf[\CM\lf(\dz^{-ksr}\sum_{\az\in\CG_{k}}\lf|\lf<f,\psi_\az^{k}\r>
\wz{\mathbf 1}_{Q_\az^{k+1}}\r|^r\r)\r]^{q/r}\r\}^{1/q}\r\|_{L^p(X)}\\
&\ls\lf\|\lf[\sum_{k\in\zz}\lf(\dz^{-ksr}\sum_{\az\in\CG_{k}}\lf|\lf<f,\psi_\az^{k}\r>
\wz{\mathbf 1}_{Q_\az^{k+1}}\r|^r\r)^{q/r}\r]^{r/q}\r\|_{L^{p/r}(X)}^{1/r}\sim \|f\|_{\dot F^s_{p,q}(\mathrm{w},X)}.
\end{align*}
This finishes the proof of the sufficiency of (ii).

Next, we prove the necessity of (ii). To achieve this, let $f\in\dot F^s_{p,q}(X)$. By Lemma \ref{lem-hdrf}, we find that
$$
f(\cdot)=\sum_{k\in\zz}\sum_{\az\in\CA_k}\sum_{m=1}^{N(k,\az)}\mu\lf(Q_\az^{k,m}\r)
\wz Q_k\lf(\cdot,y_\az^{k,m}\r)Q_kf\lf(y_\az^{k,m}\r)
$$
in $(\GOO{\bz,\gz})'$ with $\bz$ and $\gz$ as in (ii),
where $\{\wz Q_k\}_{k\in\zz}$ satisfies (i), (ii), and (iii) of Lemma \ref{lem-hdrf}.
Therefore, for any $k_0\in\zz$ and $\az_0\in\CG_{k_0}$, we have
$$
\lf<f,\psi_{\az_0}^{k_0}\r>=\sum_{k\in\zz}\sum_{\az\in\CA_k}\sum_{m=1}^{N(k,\az)}\mu\lf(Q_\az^{k,m}\r)
\lf<\wz Q_k\lf(\cdot,y_{\az}^{k,m}\r),\psi_{\az_0}^{k_0}\r>Q_kf\lf(y_\az^{k,m}\r).
$$
Using an argument similar to that used in the estimations of \cite[(3.29) and (3.30)]{whhy20},
we find that, for any $k,\ k_0\in\zz$, $\az\in\CA_{k}$, $m\in\{1,\ldots,N(k,\az)\}$, and $\az_0\in\CG_{k_0}$,
\begin{equation*}
\lf|\lf<\wz Q_k\lf(\cdot,y_{\az}^{k,m}\r),\psi_{\az_0}^{k_0}\r>\r|
\ls\dz^{(k-k_0)\bz}\lf[\mu\lf(Q_{\az_0}^{k_0+1}\r)\r]^{1/2}P_\gz\lf(y_\az^{k,m},y_{\az_0}^{k_0};\dz^{k_0}\r)
\end{equation*}
when $k\ge k_0$, and
$$
\lf|\lf<\wz Q_k\lf(\cdot,y_{\az}^{k,m}\r),\psi_{\az_0}^{k_0}\r>\r|
\ls\dz^{(k_0-k)\gz}\lf[\mu\lf(Q_{\az_0}^{k_0+1}\r)\r]^{1/2}P_\gz\lf(y_\az^{k,m},y_{\az_0}^{k_0};\dz^{k}\r)
$$
when $k<k_0$.
By these estimates, we find that, for any $k_0\in\zz$ and $x\in X$,
\begin{align*}
&\sum_{\az_0\in\CG_{k_0}}\lf|\lf<f,\psi_{\az_0}^{k_0}\r>\wz{\mathbf 1}_{Q_{\az_0}^{k_0+1}}(x)\r|\\
&\quad\ls\sum_{k=k_0}^\fz\dz^{(k-k_0)\bz}\sum_{\az\in\CA_k}\sum_{m=1}^{N(k,\az)}P_\gz\lf(y_\az^{k,m},x;\dz^{k_0}\r)
\mu\lf(Q_\az^{k,m}\r)\lf|Q_kf\lf(y_\az^{k,m}\r)\r|\\
&\qquad+\sum_{k=-\fz}^{k_0-1}\dz^{(k_0-k)\gz}\sum_{\az\in\CA_k}\sum_{m=1}^{N(k,\az)}
\mu\lf(Q_\az^{k,m}\r)P_\gz\lf(y_\az^{k,m},x;\dz^{k}\r)\lf|Q_kf\lf(y_\az^{k,m}\r)\r|.
\end{align*}
Thus, using \cite[Lemma 3.7]{whhy20}, we conclude that, for any $k_0\in\zz$,
\begin{align*}
&\dz^{-k_0sq}\sum_{\az_0\in\CG_{k_0}}\lf|\lf<f,\psi_{\az_0}^{k_0}\r>\wz{\mathbf 1}_{Q_{\az_0}^{k_0+1}}\r|^q\\
&\quad\ls\lf\{\sum_{k=k_0}^\fz\dz^{(k-k_0)[\bz+s-\omega(1/r-1)]}\lf[\dz^{-ksr}
\CM\lf(\sum_{\az\in\CA_k}\sum_{m=1}^{N(k,\az)}
\lf|Q_kf\lf(y_\az^{k,m}\r)\r|^r\mathbf 1_{Q_\az^{k,m}}\r)\r]^{1/r}\r\}^q\\
&\quad\qquad+\lf\{\sum_{k=-\fz}^{k_0-1}\dz^{(k_0-k)(\gz-s)}\lf[\dz^{-ksr}
\CM\lf(\sum_{\az\in\CA_k}\sum_{m=1}^{N(k,\az)}
\lf|Q_kf\lf(y_\az^{k,m}\r)\r|^r\mathbf 1_{Q_\az^{k,m}}\r)\r]^{1/r}\r\}^q,
\end{align*}
where $r\in(\omega/[\omega+\gz],1]$ is determined later.
Since $\min\{p,q\}>p(s,\bz\wedge\gz)$,
we may choose $r\in(\omega/[\omega+\gz],\min\{p,q,1\})$
such that $\omega(1/r-1)<\bz+s$. Thus, from the H\"{o}lder inequality when
$q\in(1,\fz]$, or Lemma \ref{lem-pin} when $q\in(p(s,\bz\wedge\gz),1]$,
and $\gz>s$, we further deduce that
\begin{equation*}
\sum_{k_0\in\zz}\dz^{-k_0sq}\sum_{\az_0\in\CG_{k_0}}\lf|\lf<f,\psi_{\az_0}^{k_0}\r>
\widetilde{\mathbf 1}_{Q_{\az_0}^{k_0+1}}\r|^q
\ls\sum_{k=-\fz}^{\fz}\lf[\dz^{-ksr}
\CM\lf(\sum_{\az\in\CA_k}\sum_{m=1}^{N(k,\az)}
\lf|Q_kf\lf(y_\az^{k,m}\r)\r|^r\mathbf 1_{Q_\az^{k,m}}\r)\r]^{q/r}.
\end{equation*}
By this and Lemma \ref{lem-fs}, we obtain
\begin{align*}
\|f\|_{\dot F^s_{p,q}(\mathrm{w},X)}&\ls\lf\|\lf\{\sum_{k=-\fz}^{\fz}\lf[\dz^{-ksr}\CM\lf(\sum_{\az\in\CA_k}\sum_{m=1}^{N(k,\az)}
\lf|Q_kf\lf(y_\az^{k,m}\r)\r|^r\mathbf 1_{Q_\az^{k,m}}\r)\r]^{q/r}\r\}^{r/q}\r\|_{L^{p/r}(X)}^{1/r}\\
&\ls\lf\|\lf[\sum_{k=-\fz}^{\fz}\dz^{-ksq}\sum_{\az\in\CA_k}\sum_{m=1}^{N(k,\az)}
\lf|Q_kf\lf(y_\az^{k,m}\r)\r|^q\mathbf 1_{Q_\az^{k,m}}\r]^{1/q}\r\|_{L^{p}(X)}^{1/r}.
\end{align*}
Finally, by the arbitrariness of $y_\az^{k,m}$, we further conclude that
$\|f\|_{\dot F^s_{p,q}(\mathrm{w},X)}\ls\|f\|_{\dot F^s_{p,q}(X)}$.
This finishes the proof of the necessity of (ii) and hence of Theorem \ref{thm-wavebtl}.
\end{proof}

\section{Boundedness of almost diagonal operators on sequence spaces}
\label{s-ado}

In this section, we mainly consider the boundedness of almost diagonal operators on sequence spaces.
Through this section, we always assume $\mu(X)=\fz$, which, by \cite[Lemma 5.1]{ny97} or
\cite[Lemma 8.1]{ah13}, is equivalent to $\diam X=\fz$.

In what follows, let
\begin{equation}\label{4.1x}
\wt{\mathcal D}:=\lf\{Q_\az^{k+1}:\ k\in\zz,\ \az\in\CG_k\r\}.
\end{equation}
Using this, we now introduce the following notion of homogeneous Besov and Triebel--Lizorkin sequence spaces.
\begin{definition}
Let $s\in\rr$ and $q\in (0,\fz]$.
\begin{enumerate}
\item[(i)] If $p\in(0,\fz]$, then the \emph{homogeneous Besov sequence space $\dot b^s_{p,q}$} is defined to be
the set of all sequences $\lz:=\{\lz_Q\}_{Q\in\wD}\subset \cc$ such that
$$
\|\lz\|_{\dot b^s_{p,q}}:=\lf[\sum_{k\in\zz}\dz^{-ksq}\lf\{\sum_{\az\in\CG_k}
\lf[\mu\lf(Q_\az^{k+1}\r)\r]^{1-p/2}\lf|\lz_{Q_\az^{k+1}}\r|^p\r\}^{q/p}\r]^{1/q}<\fz
$$
with usual modifications made when $p=\fz$ or $q=\fz$.

\item[(ii)] If $p\in(0,\fz)$, then the \emph{homogeneous Triebel--Lizorkin sequence space
$\dot f^s_{p,q}$} is defined to be the set of all sequences
$\lz:=\{\lz_Q\}_{Q\in\wD}\subset \cc$ such that
$$
\|\lz\|_{\dot f^s_{p,q}}:=\lf\|\lf(\sum_{k\in\zz}\dz^{-ksq}\sum_{\az\in\CG_k}\lf|\lz_{Q_\az^{k+1}}
\wz{\mathbf 1}_{Q_\az^{k+1}}\r|^q\r)^{1/q}\r\|_{L^p(X)}<\fz
$$
with the usual modification made when $q=\fz$. Recall that, for any $k\in\zz$ and $\az\in\CG_k$,
$$
\wz{\mathbf 1}_{Q_\az^{k+1}}:=\lf[\mu\lf(Q_\az^{k+1}\r)\r]^{-1/2}\mathbf 1_{Q_\az^{k+1}}.
$$
\end{enumerate}
\end{definition}

Next, we introduce the notion of almost diagonal operators. To this end, we first introduce
some notation. Let $A:=\{A_{Q,P}\}_{Q,\ P\in\wD}\subset\cc$. For
any sequence $\lz:=\{\lz_P\}_{P\in\wD}\subset\cc$, define $A\lz:=\{(A\lz)_Q\}_{Q\in\wD}$ by setting, for any $Q\in\wD$,
$$
(A\lz)_Q:=\sum_{P\in\wD} A_{Q,P}\lz_P
$$
if, for any $Q\in\wD$, the above summation converges. Let $Q\in\wD$ be such that $Q:=Q_\az^{k+1}$ for some
$k,\ l\in\zz$ and $\az\in\CG_k$. Define the ``\emph{center}'' $x_Q$ of $Q$ by setting $x_Q:=y_\az^k=z_\az^{k+1}$ and
the ``\emph{side-length}'' $\ell(Q)$ of $Q$ by setting $\ell(Q):=\dz^{k+1}$.

Now, we introduce the notion of homogeneous almost diagonal operators on homogeneous Besov and Triebel--Lizorkin
sequence spaces.

\begin{definition}\label{def-ado}
Let $A:=\{A_{Q,P}\}_{Q,\ P\in\wD}\subset\cc$ and $\omega_0$ be as in \eqref{eq-updim}.
\begin{enumerate}
\item The operator $A$ is called an \emph{almost diagonal operator on $\dot b^s_{p,q}$}, with $s\in\rr$ and
$p,\ q\in(0,\fz]$, if there exist an $\ez\in(0,\fz)$ and an $\omega\in[\omega_0,\fz)$ satisfying \eqref{eq-doub} such that
\begin{equation}\label{eq-defado}
K:=\sup_{Q,\ P\in\wD} \frac{|A_{Q,P}|}{\mathfrak M_{Q,P}(\ez)}<\fz,
\end{equation}
where, for any $Q,\ P\in\wD$,
\begin{align}\label{eq-defopq}
\mathfrak M_{Q,P}(\ez)&:=\lf[\frac{\ell(Q)}{\ell(P)}\r]^s[\mu(Q)\mu(P)]^{1/2}P_{\ez+J-\omega}(x_Q,x_P;\max\{\ell(Q),\ell(P)\})
\noz\\
&\qquad\times\min\lf\{\lf[\frac{\ell(Q)}{\ell(P)}\r]^{\ez/2},\lf[\frac{\ell(P)}{\ell(Q)}\r]^{\ez/2+J-\omega}\r\}
\end{align}
with $J:=\omega/\min\{1,p\}$.
\item The operator $A$ is called an \emph{almost diagonal operator on $\dot f^s_{p,q}$}, with $s\in\rr$,
$p\in(0,\fz)$, and $q\in(0,\fz]$, if there exist an
$\ez\in(0,\fz)$ and an $\omega\in[\omega_0,\fz)$ satisfying \eqref{eq-doub}
such that \eqref{eq-defado} holds true,
where, for any $Q,\ P\in\wD$, $\mathfrak M_{Q,P}(\ez)$ is as 
in \eqref{eq-defopq} with $J:=\omega/\min\{1,p,q\}$.
\end{enumerate}
\end{definition}

On the boundedness of almost diagonal operators on Besov and Triebel--Lizorkin
sequence spaces, we have the following conclusion.

\begin{theorem}\label{thm-ado}
Let $s\in\rr$, $p\in(0,\fz]$ [resp., $p\in(0,\fz)$], $q\in(0,\fz]$, and $A:=\{A_{Q,P}\}_{Q,\ P\in\wD}$ be an almost
diagonal operator on $\dot b^s_{p,q}$ (resp., $\dot f^s_{p,q}$). Then $A$ is bounded on $\dot b^s_{p,q}$
(resp., $\dot f^s_{p,q}$). Moreover, there exists a positive constant $C$,
independent of $A$,  such that, for any $\lz\in\dot b^s_{p,q}$
(resp., $\lz\in\dot f^s_{p,q}$), $\|A\lz\|_{\dot b^s_{p,q}}\le CK\|\lz\|_{\dot b^s_{p,q}}$
(resp., $\|A\lz\|_{\dot f^s_{p,q}}\le CK\|\lz\|_{\dot f^s_{p,q}}$).
\end{theorem}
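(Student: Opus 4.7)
\textbf{Proof proposal for Theorem \ref{thm-ado}.}
The plan is to adapt the classical Frazier--Jawerth approach for almost diagonal operators on $\rn$ to the present setting, with the doubling-type kernel estimates of Lemmas \ref{lem-bbes} and \ref{lem-btl} playing the role of the usual Euclidean ones, and with Lemma \ref{lem-fs} (Fefferman--Stein) handling the vector-valued maximal step in the Triebel--Lizorkin case. Throughout, fix $\lz:=\{\lz_P\}_{P\in\wD}$ and a generic cube $Q:=Q_\az^{k+1}\in\wD$, and use the bound $|A_{Q,P}|\le K\mathfrak M_{Q,P}(\ez)$. Splitting the sum defining $(A\lz)_Q$ according to whether $\ell(P)\le\ell(Q)$ or $\ell(P)>\ell(Q)$, I would write
$$
|(A\lz)_Q|\le K\sum_{j=k}^{\fz}\sum_{\bz\in\CG_j}\mathfrak M_{Q,Q_\bz^{j+1}}(\ez)\lf|\lz_{Q_\bz^{j+1}}\r|
+K\sum_{j=-\fz}^{k-1}\sum_{\bz\in\CG_j}\mathfrak M_{Q,Q_\bz^{j+1}}(\ez)\lf|\lz_{Q_\bz^{j+1}}\r|=:\mathrm I_Q+\mathrm{II}_Q,
$$
so that in $\mathrm I_Q$ the min-factor in $\mathfrak M_{Q,P}(\ez)$ reduces to $\dz^{(j-k)(\ez/2+J-\omega)}$ with $P_{\ez+J-\omega}$ at scale $\dz^{k+1}$, while in $\mathrm{II}_Q$ the min-factor is $\dz^{(k-j)\ez/2}$ with $P_{\ez+J-\omega}$ at scale $\dz^{j+1}$.

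For part (i), dealing with $\dot b^s_{p,q}$, I would first apply the H\"{o}lder inequality when $p>1$, or Lemma \ref{lem-pin} when $p\le 1$, to pull the $\ell^p$-norm over $\az$ inside the sum over $\bz$. In the regime $p\le 1$, Lemma \ref{lem-bbes} applied with parameter $p$ (admissible because $J=\omega/\min\{1,p\}$ forces $p>\omega_0/(\omega_0+\ez+J-\omega)$) produces the correct dyadic volume factor $[\mu(Q_\az^{k+1})]^{1-p}$; in the regime $p>1$ a dual use of the kernel estimate plays the same role. After this step the $j$-summation decouples into a discrete convolution in $j-k$ against $\dz^{-jsp}\sum_{\bz\in\CG_j}[\mu(Q_\bz^{j+1})]^{1-p/2}|\lz_{Q_\bz^{j+1}}|^p$, with geometric kernel whose ratios on the two sides are $\dz^{(\ez/2+J-\omega)p}$ and $\dz^{\ez p/2}$ after absorbing the factor $\dz^{-(j-k)sp}$ from $\mathfrak M$. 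A further H\"{o}lder step (or Lemma \ref{lem-pin} if $q\le 1$) in the $\ell^q$-variable yields $\|A\lz\|_{\dot b^s_{p,q}}\ls K\|\lz\|_{\dot b^s_{p,q}}$.

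For part (ii), dealing with $\dot f^s_{p,q}$, the strategy is parallel but carried out pointwise. I would select $r\in(\omega/(\omega+\ez+J-\omega),\min\{1,p,q\})$, which is admissible precisely because $J=\omega/\min\{1,p,q\}$ in this case. Invoking Lemma \ref{lem-btl} with this $r$ on each inner sum over $\bz$ in $\mathrm I_Q$ and $\mathrm{II}_Q$ produces a Hardy--Littlewood maximal function acting on $\sum_\bz|\lz_{Q_\bz^{j+1}}\wz{\mathbf 1}_{Q_\bz^{j+1}}|^r$, multiplied by the scale gain $\dz^{[(k\wedge j)-j]\omega(1/r-1)}$. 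After raising to the $q$-th power, weighting by $\dz^{-ksq}$, and telescoping in $j-k$ via H\"{o}lder or Lemma \ref{lem-pin}, I would arrive at the pointwise bound
$$
\sum_{k\in\zz}\dz^{-ksq}\sum_{\az\in\CG_k}\lf|(A\lz)_Q\wz{\mathbf 1}_{Q_\az^{k+1}}\r|^q
\ls K^q\sum_{j\in\zz}\lf[\CM\lf(\dz^{-jsr}\sum_{\bz\in\CG_j}\lf|\lz_{Q_\bz^{j+1}}\wz{\mathbf 1}_{Q_\bz^{j+1}}\r|^r\r)\r]^{q/r}.
$$
A final application of Lemma \ref{lem-fs} with the exponents $\ell^{q/r}(L^{p/r})$, followed by extraction of the $1/r$-th power, closes the estimate $\|A\lz\|_{\dot f^s_{p,q}}\ls K\|\lz\|_{\dot f^s_{p,q}}$.

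The main obstacle I anticipate is the low-exponent regime $\min\{p,q\}\le 1$, where the absence of the $\ell^1$-triangle inequality forces the $r$-concavity substitution via Lemma \ref{lem-pin} before the maximal function can be invoked. The particular form $J=\omega/\min\{1,p,q\}$ in Definition \ref{def-ado} is engineered precisely so that the spatial decay exponent $\ez+J-\omega$ in $P_{\ez+J-\omega}$ is large enough to admit a parameter $r$ satisfying simultaneously $r<\min\{1,p,q\}$ and $r>\omega/(\omega+\ez+J-\omega)$; verifying this compatibility and checking that the resulting geometric series in $j-k$ converge (using $s\in\rr$ and the strict gains $\ez/2>0$, $\ez/2+J-\omega\ge\ez/2>0$) are the only delicate points, after which every remaining step is routine.
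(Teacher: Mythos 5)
Your proposal is correct and follows essentially the same route as the paper's proof: split according to the relative sizes of $\ell(P)$ and $\ell(Q)$, handle $\dot b^s_{p,q}$ via the H\"older inequality (or Lemma \ref{lem-pin}) together with Lemma \ref{lem-bbes}, and handle $\dot f^s_{p,q}$ via Lemma \ref{lem-btl} with an auxiliary exponent $r$ followed by the Fefferman--Stein inequality of Lemma \ref{lem-fs}, with $J-\omega$ exactly absorbing the volume/maximal-function losses. The only (harmless) deviation is that you treat all of $\dot f^s_{p,q}$ uniformly with $r<\min\{1,p,q\}$, whereas the paper separates $\min\{p,q\}>1$ from $\min\{p,q\}\le 1$; just note that, as you anticipate, $r$ must be taken close enough to $\min\{1,p,q\}$ (not merely above $\omega/(\omega+\ez+J-\omega)$) so that the net exponent in the small-cube sum stays positive, which is the same convergence check implicit in the paper.
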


\begin{proof}
Let $A$ be the same as in this theorem, and $\ez$, $\omega$, and $J$ as in Definition \ref{def-ado},
We separate $A$ into the following two parts: For any sequence
$\lz:=\{\lz_P\}_{P\in\wD}$ and any $Q\in\wD$, let
$$
(A_0\lz)_Q:=\sum_{\{P\in\wD:\ \ell(P)\ge\ell(Q)\}} A_{Q,P}\lz_P \quad\text{and}
\quad (A_1\lz)_Q:=\sum_{\{P\in\wD:\ \ell(P)<\ell(Q)\}} A_{Q,P}\lz_P.
$$
To prove this theorem, it suffices to show that $A_0$ and $A_1$ are both
bounded on $\dot b^s_{p,q}$ and $\dot f^s_{p,q}$, respectively, with $s$, $p$,
and $q$ as in this theorem.

We first establish the boundedness of $A_0$ on $\dot b^s_{p,q}$
by considering the following two cases on $p$.

{\it Case 1.1) $p\in(1,\fz]$}. In this case, $J=\omega$.
Let $\lz:=\{\lz_Q\}_{Q\in\wD}\in\dot b^s_{p,q}$. We have, for any $k_0\in\zz$ and $\az_0\in\CG_{k_0}$,
\begin{align*}
\lf|(A_0\lz)_{Q_{\az_0}^{k_0+1}}\r|&\le\sum_{\{P\in\wD:\ \ell(P)\ge\dz^{k_0+1}\}}
\lf|A_{Q_{\az_0}^{k_0+1},P}\r|
|\lz_P|\ls\sum_{k=-\fz}^{k_0}\sum_{\az\in\CG_k}\lf|\mathfrak M_{Q_{\az_0}^{k_0+1},Q_{\az}^{k+1}}(\ez)\r|
\lf|\lz_{Q_{\az}^{k+1}}\r|\\
&\ls\sum_{k=-\fz}^{k_0}\sum_{\az\in\CG_k}\dz^{(k_0-k)s}\dz^{(k_0-k)\ez/2}\lf[\mu\lf(Q_{\az_0}^{k_0+1}\r)
\mu\lf(Q_{\az}^{k+1}\r)\r]^{1/2}P_\ez\lf(y_\az^k,y_{\az_0}^{k_0};\dz^k\r)\lf|\lz_{Q_{\az}^{k+1}}\r|,
\end{align*}
which further implies that, for any $k_0\in\zz$ and $\az_0\in\CG_{k_0}$,
\begin{align}\label{eq-1.1}
&\lf[\mu\lf(Q_{\az_0}^{k_0+1}\r)\r]^{-1/2}\lf|(A_0\lz)_{Q_{\az_0}^{k_0+1}}\r|\noz\\
&\quad\ls\sum_{k=-\fz}^{k_0}\sum_{\az\in\CG_k}\dz^{(k_0-k)s}\dz^{(k_0-k)\ez/2}
\mu\lf(Q_{\az}^{k+1}\r)P_\ez\lf(y_\az^k,y_{\az_0}^{k_0};\dz^k\r)
\lf[\mu\lf(Q_\az^{k+1}\r)\r]^{-1/2}\lf|\lz_{Q_{\az}^{k+1}}\r|.
\end{align}
By this and the H\"{o}lder inequality, we conclude that, for any $k_0\in\zz$,
\begin{align*}
&\sum_{\az_0\in\CG_{k_0}}\lf[\mu\lf(Q_{\az_0}^{k_0+1}\r)\r]^{1-p/2}\lf|(A_0\lz)_{Q_{\az_0}^{k_0+1}}\r|^p\\
&\quad\ls\sum_{\az_0\in\CG_{k_0}}\mu\lf(Q_{\az_0}^{k_0+1}\r)\sum_{k=-\fz}^{k_0}\sum_{\az\in\CG_k}
\dz^{(k_0-k)sp}\dz^{(k_0-k)\ez/2}P_\ez\lf(y_\az^k,y_{\az_0}^{k_0};\dz^k\r)\lf[\mu\lf(Q_\az^{k+1}\r)\r]^{1-p/2}
\lf|\lz_{Q_{\az}^{k+1}}\r|^p\\
&\quad\ls\sum_{k=-\fz}^{k_0}\dz^{(k_0-k)sp}\dz^{(k_0-k)\ez/2}\sum_{\az\in\CG_k}
\lf[\mu\lf(Q_\az^{k+1}\r)\r]^{1-p/2}\lf|\lz_{Q_{\az}^{k+1}}\r|^p.
\end{align*}
Using this and the H\"{o}lder inequality when $q/p\in(1,\fz]$, or Lemma \ref{lem-pin} when
$q/p\in(0,1]$, we obtain
\begin{align*}
&\sum_{k_0=-\fz}^\fz\dz^{-k_0sq}\lf\{\sum_{\az_0\in\CG_{k_0}}\lf[\mu\lf(Q_{\az_0}^{k_0+1}\r)\r]^{1-p/2}
\lf|(A_0\lz)_{Q_{\az_0}^{k_0+1}}\r|^p\r\}^{q/p}\\
&\quad\ls\sum_{k_0=-\fz}^\fz\sum_{k=-\fz}^{k_0}\dz^{-ksq}\dz^{(k_0-k)\ez\min\{q/p,1\}/2}
\lf\{\sum_{\az\in\CG_k}\lf[\mu\lf(Q_\az^{k+1}\r)\r]^{1-p/2}\lf|\lz_{Q_{\az}^{k+1}}\r|^p\r\}^{q/p}\\
&\quad\ls\sum_{k=-\fz}^{\fz}\dz^{-ksq}\lf\{\sum_{\az\in\CG_k}\lf[\mu\lf(Q_\az^{k+1}\r)\r]^{1-p/2}
\lf|\lz_{Q_{\az}^{k+1}}\r|^p\r\}^{q/p}.
\end{align*}
Thus, we conclude that $\|A_0\lz\|_{\dot b^s_{p,q}}\ls\|\lz\|_{\dot b^s_{p,q}}$, which is the desired
conclusion in this case.

{\it Case 1.2) $p\in(0,1]$}. In this case, $J=\omega/p$. Let $\lz:=\{\lz_Q\}_{Q\in\wD}\in\dot b^s_{p,q}$.
We conclude that, for any $k_0\in\zz$ and $\az_0\in\CG_{k_0}$,
\begin{align*}
\lf|(A_0\lz)_{Q_{\az_0}^{k_0+1}}\r|
&\ls\sum_{k=-\fz}^{k_0}\sum_{\az\in\CG_k}\dz^{(k_0-k)s}\dz^{(k_0-k)\ez/2}
\lf[\mu\lf(Q_{\az_0}^{k_0+1}\r)\mu\lf(Q_{\az}^{k+1}\r)\r]^{1/2}\\
&\quad\times P_{\ez+\omega(1/p-1)}
\lf(y_\az^k,y_{\az_0}^{k_0};\dz^k\r)\lf|\lz_{Q_{\az}^{k+1}}\r|.
\end{align*}
Thus, we find that, for any $k_0\in\zz$ and $\az_0\in\CG_{k_0}$,
\begin{align}\label{eq-1.2}
&\lf[\mu\lf(Q_{\az_0}^{k_0+1}\r)\r]^{-1/2}\lf|(A_0\lz)_{Q_{\az_0}^{k_0+1}}\r|\noz\\
&\quad\ls\sum_{k=-\fz}^{k_0}\sum_{\az\in\CG_k}\dz^{(k_0-k)s}\dz^{(k_0-k)\ez/2}
\mu\lf(Q_{\az}^{k+1}\r)P_{\ez+\omega(1/p-1)}\lf(y_\az^k,y_{\az_0}^{k_0};\dz^{k}\r)
\lf[\mu\lf(Q_\az^{k+1}\r)\r]^{-1/2}\lf|\lz_{Q_{\az}^{k+1}}\r|.
\end{align}
From this, Lemmas \ref{lem-pin} and \ref{lem-bbes}, we further deduce that, for any $k_0\in\zz$,
\begin{align*}
&\sum_{\az_0\in\CG_{k_0}}\lf[\mu\lf(Q_{\az_0}^{k_0+1}\r)\r]^{1-p/2}\lf|(A_0\lz)_{Q_{\az_0}^{k_0+1}}\r|^p\\
&\quad\ls\sum_{k=-\fz}^{k_0}\sum_{\az\in\CG_k}\dz^{(k_0-k)sp}
\dz^{(k_0-k)\ez p/2}\sum_{\az_0\in\CG_{k_0}}\mu\lf(Q_{\az_0}^{k_0+1}\r)
\lf[P_{\ez+\omega(1/p-1)}\lf(y_\az^k,y_{\az_0}^{k_0};\dz^{k}\r)\r]^p\\
&\quad\qquad\times\lf[\mu\lf(Q_\az^{k+1}\r)\r]^{p-p/2}\lf|\lz_{Q_{\az}^{k+1}}\r|^p\\
&\quad\ls\sum_{k=-\fz}^{k_0}\dz^{(k_0-k)sp}\dz^{(k_0-k)\ez p/2}\sum_{\az\in\CG_k}
\lf[\mu\lf(Q_\az^{k+1}\r)\r]^{1-p/2}\lf|\lz_{Q_{\az}^{k+1}}\r|^p,
\end{align*}
where we used the fact that $\omega/[\omega+\omega(1/p-1)+\ez]<p$. Using an argument similar to that used in the
estimation of Case 1.1), we find that
\begin{align*}
&\sum_{k_0=-\fz}^\fz\dz^{-k_0sq}\lf\{\sum_{\az_0\in\CG_{k_0}}\lf[\mu\lf(Q_{\az_0}^{k_0+1}\r)\r]^{1-p/2}
\lf|(A_0\lz)_{Q_{\az_0}^{k_0+1}}\r|^p\r\}^{q/p}\\
&\quad\ls\sum_{k_0=-\fz}^\fz\sum_{k=-\fz}^{k_0}\dz^{-ksq}\dz^{(k_0-k)\ez\min\{q,p\}/2}
\lf\{\sum_{\az\in\CG_k}\lf[\mu\lf(Q_\az^{k+1}\r)\r]^{1-p/2}\lf|\lz_{Q_{\az}^{k+1}}\r|^p\r\}^{q/p}\\
&\quad\ls\sum_{k=-\fz}^{\fz}\dz^{-ksq}\lf\{\sum_{\az\in\CG_k}\lf[\mu\lf(Q_\az^{k+1}\r)\r]^{1-p/2}
\lf|\lz_{Q_{\az}^{k+1}}\r|^p\r\}^{q/p},
\end{align*}
which further implies that $\|A_0\lz\|_{\dot b^s_{p,q}}\ls\|\lz\|_{\dot b^s_{p,q}}$.
This is also the desired conclusion in this
case, which, combined with the
conclusion in Case 1.1), then completes the proof of the boundedness of $A_0$ on $\dot b^s_{p,q}$.

Now, we establish the boundedness of $A_1$ on $\dot b^s_{p,q}$
also by considering the following two cases on $p$.

{\it Case 2.1) $p\in(1,\fz]$.} In this case, $J=\omega$. Let $\lz:=\{\lz_Q\}_{Q\in\wD}\in\dot b^s_{p,q}$.
We conclude that, for any $k_0\in\zz$ and $\az_0\in\CG_{k_0}$,
\begin{align*}
\lf|(A_1\lz)_{Q_{\az_0}^{k_0+1}}\r|&\le\sum_{\{P\in\wD:\ \ell(P)<\dz^{k_0+1}\}}
\lf|A_{Q_{\az_0}^{k_0+1},P}\r|
|\lz_P|\ls\sum_{k=k_0+1}^{\fz}\sum_{\az\in\CG_k}\lf|\omega_{Q_{\az_0}^{k_0+1},Q_{\az}^{k+1}}(\ez)\r|
\lf|\lz_{Q_{\az}^{k+1}}\r|\\
&\ls\sum_{k=k_0+1}^\fz\sum_{\az\in\CG_k}\dz^{(k_0-k)s}\dz^{(k-k_0)\ez/2}\lf[\mu\lf(Q_{\az_0}^{k_0+1}\r)
\mu\lf(Q_{\az}^{k+1}\r)\r]^{1/2}P_\ez\lf(y_{\az_0}^{k_0},y_\az^k;\dz^{k_0}\r)\lf|\lz_{Q_{\az}^{k+1}}\r|.
\end{align*}
Therefore, we have, for any $k_0\in\zz$ and $\az_0\in\CG_{k_0}$,
\begin{align}\label{eq-2.1}
&\lf[\mu\lf(Q_{\az_0}^{k_0+1}\r)\r]^{-1/2}\lf|(A_1\lz)_{Q_{\az_0}^{k_0+1}}\r|\noz\\
&\quad\ls\sum_{k=k_0+1}^{\fz}\sum_{\az\in\CG_k}\dz^{(k_0-k)s}\dz^{(k-k_0)\ez/2}
\mu\lf(Q_{\az}^{k+1}\r) P_\ez\lf(y_{\az_0}^{k_0},y_\az^k;\dz^{k_0}\r)
\lf[\mu\lf(Q_\az^{k+1}\r)\r]^{-1/2}\lf|\lz_{Q_{\az}^{k+1}}\r|.
\end{align}
By this and the H\"{o}lder inequality, we conclude that, for any $k_0\in\zz$,
\begin{align*}
&\sum_{\az_0\in\CG_{k_0}}\lf[\mu\lf(Q_{\az_0}^{k_0+1}\r)\r]^{1-p/2}\lf|(A_0\lz)_{Q_{\az_0}^{k_0+1}}\r|^p\\
&\quad\ls\sum_{\az_0\in\CG_{k_0}}\mu\lf(Q_{\az_0}^{k_0+1}\r)\sum_{k=k_0+1}^{\fz}\sum_{\az\in\CG_k}
\dz^{(k_0-k)sp}\dz^{(k-k_0)\ez/2}P_\ez\lf(y_{\az_0}^{k_0},y_\az^k;\dz^{k_0}\r)
\lf[\mu\lf(Q_\az^{k+1}\r)\r]^{1-p/2}\lf|\lz_{Q_{\az}^{k+1}}\r|^p\\
&\quad\ls\sum_{k=k_0+1}^{\fz}\dz^{(k_0-k)sp}\dz^{(k-k_0)\ez/2}\sum_{\az\in\CG_k}
\lf[\mu\lf(Q_\az^{k+1}\r)\r]^{1-p/2}\lf|\lz_{Q_{\az}^{k+1}}\r|^p.
\end{align*}
This, together with the H\"{o}lder inequality when $q/p\in(1,\fz]$, or Lemma \ref{lem-pin} when $q/p\in(0,1]$,
further implies that
\begin{align*}
&\sum_{k_0=-\fz}^\fz\dz^{-k_0sq}\lf\{\sum_{\az_0\in\CG_{k_0}}\lf[\mu\lf(Q_{\az_0}^{k_0+1}\r)\r]^{1-p/2}
\lf|(A_1\lz)_{Q_{\az_0}^{k_0+1}}\r|^p\r\}^{q/p}\\
&\quad\ls\sum_{k_0=-\fz}^\fz\sum_{k=k_0+1}^{\fz}\dz^{-ksq}\dz^{(k-k_0)\ez\min\{q/p,1\}/2}
\lf\{\sum_{\az\in\CG_k}\lf[\mu\lf(Q_\az^{k+1}\r)\r]^{1-p/2}\lf|\lz_{Q_{\az}^{k+1}}\r|^p\r\}^{q/p}\\
&\quad\ls\sum_{k=-\fz}^{\fz}\dz^{-ksq}\lf\{\sum_{\az\in\CG_k}\lf[\mu\lf(Q_\az^{k+1}\r)\r]^{1-p/2}
\lf|\lz_{Q_{\az}^{k+1}}\r|^p\r\}^{q/p}.
\end{align*}
Thus, we conclude that $\|A_1\lz\|_{\dot b^s_{p,q}}\ls\|\lz\|_{\dot b^s_{p,q}}$, which is the desired
conclusion in this case.

{\it Case 2.2) $p\in(0,1]$}. In this case, $J=\omega/p$. Let $\lz:=\{\lz_Q\}_{Q\in\wD}\in\dot b^s_{p,q}$.
We have, for any $k_0\in\zz$ and $\az_0\in\CG_{k_0}$,
\begin{align*}
\lf|(A_1\lz)_{Q_{\az_0}^{k_0+1}}\r|
&\ls\sum_{k=k_0+1}^{\fz}\sum_{\az\in\CG_k}\lf|\omega_{Q_{\az_0}^{k_0+1},Q_{\az}^{k+1}}(\ez)\r|
\lf|\lz_{Q_{\az}^{k+1}}\r|\\
&\ls\sum_{k=k_0+1}^{\fz}\sum_{\az\in\CG_k}\dz^{(k_0-k)s}\dz^{(k-k_0)[\ez/2+\omega(1/p-1)]}
\lf[\mu\lf(Q_{\az_0}^{k_0+1}\r)\mu\lf(Q_{\az}^{k+1}\r)\r]^{1/2}\\
&\qquad\times P_{\ez+\omega(1/p-1)}\lf(y_{\az_0}^{k_0},y_\az^k;\dz^{k_0}\r)\lf|\lz_{Q_{\az}^{k+1}}\r|.
\end{align*}
Then we conclude that, for any $k_0\in\zz$ and $\az_0\in\CG_{k_0}$,
\begin{align}\label{eq-2.2}
\lf[\mu\lf(Q_{\az_0}^{k_0+1}\r)\r]^{-1/2}\lf|(A_1\lz)_{Q_{\az_0}^{k_0+1}}\r|
&\ls\sum_{k=k_0+1}^{\fz}\sum_{\az\in\CG_k}\dz^{(k_0-k)s}\dz^{(k-k_0)\ez/2}
\mu\lf(Q_{\az}^{k+1}\r)\noz\\
&\qquad\times P_{\ez+\omega(1/p-1)}\lf(y_{\az_0}^{k_0},y_\az^k;\dz^{k_0}\r)\lf[\mu\lf(Q_\az^{k+1}\r)\r]^{-1/2}\lf|\lz_{Q_{\az}^{k+1}}\r|.
\end{align}
This, combined with Lemmas \ref{lem-pin} and \ref{lem-bbes}, further implies that
\begin{align*}
&\sum_{\az_0\in\CG_{k_0}}\lf[\mu\lf(Q_{\az_0}^{k_0+1}\r)\r]^{1-p/2}\lf|(A_1\lz)_{Q_{\az_0}^{k_0+1}}\r|^p\\
&\quad\ls\sum_{k=k_0+1}^{\fz}\sum_{\az\in\CG_k}\dz^{(k_0-k)sp}
\dz^{(k-k_0)\ez p/2}\sum_{\az_0\in\CG_{k_0}}
\mu\lf(Q_{\az_0}^{k_0+1}\r)\\
&\quad\qquad\times \lf[P_{\ez+\omega(1/p-1)}\lf(y_{\az_0}^{k_0},y_\az^k;\dz^{k_0}\r)\r]^p
\lf[\mu\lf(Q_\az^{k+1}\r)\r]^{p-p/2}\lf|\lz_{Q_{\az}^{k+1}}\r|^p\\
&\quad\ls\sum_{k=k_0+1}^{\fz}\dz^{(k_0-k)sp}\dz^{(k-k_0)\ez p/2}\sum_{\az\in\CG_k}
\lf[\mu\lf(Q_\az^{k+1}\r)\r]^{1-p/2}\lf|\lz_{Q_{\az}^{k+1}}\r|^p,
\end{align*}
where we used the fact $\omega/[\omega+\omega(1/p-1)+\ez]<p$. Using an argument similar to that used in the
estimation of Case 1.2), we find that
\begin{align*}
&\sum_{k_0=-\fz}^\fz\dz^{-k_0sq}\lf\{\sum_{\az_0\in\CG_{k_0}}\lf[\mu\lf(Q_{\az_0}^{k_0+1}\r)\r]^{1-p/2}
\lf|(A_0\lz)_{Q_{\az_0}^{k_0+1}}\r|^p\r\}^{q/p}\\
&\quad\ls\sum_{k_0=-\fz}^\fz\sum_{k=k_0+1}^{\fz}\dz^{-ksq}\dz^{(k_0-k)\ez\min\{q,p\}/2}
\lf\{\sum_{\az\in\CG_k}\lf[\mu\lf(Q_\az^{k+1}\r)\r]^{1-p/2}\lf|\lz_{Q_{\az}^{k+1}}\r|^p\r\}^{q/p}\\
&\quad\ls\sum_{k=-\fz}^{\fz}\dz^{-ksq}\lf\{\sum_{\az\in\CG_k}\lf[\mu\lf(Q_\az^{k+1}\r)\r]^{1-p/2}
\lf|\lz_{Q_{\az}^{k+1}}\r|^p\r\}^{q/p},
\end{align*}
which further implies that
$\|A_1\lz\|_{\dot b^s_{p,q}}\ls\|\lz\|_{\dot b^s_{p,q}}$. This is also the desired conclusion in this case,
which, together with the conclusion in Case 2.1), then completes the proof of the boundedness of $A_1$ on
$\dot b^s_{p,q}$.

Combining Cases 1.1), 1.2), 2.1), and 2.2), we conclude that,
for any given $s$, $p$, and $q$ as in this lemma,
and any $\lz\in \dot b^s_{p,q}$, $\|A\lz\|_{\dot b^s_{p,q}}\ls\|\lz\|_{\dot b^s_{p,q}}$,
namely, $A$ is bounded on $\dot b^s_{p,q}$.

We now establish the boundedness of $A$ on $\dot f^s_{p,q}$
by considering following two cases on
$\min\{p,q\}$.

{\it Case 3.1) $\min\{p,q\}>1$.} In this case, $J=\omega$. Let $\lz:=\{\lz_Q\}_{Q\in\wD}\in\dot f^s_{p,q}$.
By \eqref{eq-1.1} and \cite[Proposition 2.2(ii)]{hlyy19}, we find that, for any $k_0\in\zz$
and $x\in X$,
\begin{align*}
&\sum_{\az_0\in\CG_{k_0}}\lf|(A_0\lz)_{Q_{\az_0}^{k_0+1}}\r|\wz{\mathbf 1}_{Q_{\az_0}^{k_0+1}}(x)\\
&\quad\ls\sum_{k=-\fz}^{k_0}\sum_{\az\in\CG_k}\dz^{(k_0-k)s}\dz^{(k_0-k)\ez/2}
\mu\lf(Q_{\az}^{k+1}\r)\sum_{\az_0\in\CG_{k_0}}P_\ez\lf(y_{\az_0}^{k_0},y_\az^k;\dz^k\r)
\lf[\mu\lf(Q_\az^{k+1}\r)\r]^{-1/2}\lf|\lz_{Q_{\az}^{k+1}}\r|\mathbf 1_{Q_{\az_0}^{k_0+1}}(x)\\
&\quad\ls\sum_{k=-\fz}^{k_0}\dz^{(k-k_0)s}\dz^{(k_0-k)\ez/2}
\int_X P_\ez(x,y;\dz^k)\sum_{\az\in\CG_k}\lf|\lz_{Q_{\az}^{k+1}}\r|
\wz{\mathbf 1}_{Q_\az^{k+1}}(y)\,d\mu(y)\\
&\quad\ls\sum_{k=-\fz}^{k_0}\dz^{(k-k_0)s}\dz^{(k_0-k)\ez/2}\CM\lf(\sum_{\az\in\CG_k}
\lf|\lz_{Q_{\az}^{k+1}}\r|\wz{\mathbf 1}_{Q_\az^{k+1}}\r)(x).
\end{align*}
Thus, from the H\"{o}lder inequality, we deduce that
\begin{align*}
\mathrm{L}:=&\,\lf\{\sum_{k_0=-\fz}^\fz\dz^{-k_0sq}\lf[\sum_{\az_0\in\CG_{k_0}}\lf|(A_0\lz)_{Q_{\az_0}^{k_0+1}}\r|
\wz{\mathbf 1}_{Q_{\az_0}^{k_0+1}}\r]^q\r\}^{1/q}\\
\ls&\,\lf\{\sum_{k_0=-\fz}^\fz\dz^{-k_0sq}\lf[\sum_{k=-\fz}^{k_0}\dz^{(k-k_0)s}\dz^{(k_0-k)\ez/2}
\CM\lf(\sum_{\az\in\CG_k}\lf|\lz_{Q_{\az}^{k+1}}\r|\wz{\mathbf 1}_{Q_\az^{k+1}}\r)\r]^q\r\}^{1/q}\\
\ls&\,\lf\{\sum_{k=-\fz}^\fz\dz^{-ksq}
\lf[\CM\lf(\sum_{\az\in\CG_k}\lf|\lz_{Q_{\az}^{k+1}}\r|\wz{\mathbf 1}_{Q_\az^{k+1}}\r)\r]^q\r\}^{1/q}.
\end{align*}
This, together with Lemma \ref{lem-fs}, further implies that
\begin{align*}
\|A_0\lz\|_{\dot f^s_{p,q}}
&=\lf\|\mathrm{L}\r\|_{L^p(X)}
\ls\lf\|\lf\{\sum_{k=-\fz}^\fz\dz^{-ksq}\lf[\CM\lf(\sum_{\az\in\CG_k}\lf|\lz_{Q_{\az}^{k+1}}\r|
\wz{\mathbf 1}_{Q_\az^{k+1}}\r)\r]^q\r\}^{1/q}\r\|_{L^p(X)}\\
&\ls\lf\|\lf[\sum_{k=-\fz}^\fz\dz^{-ksq}\lf(\sum_{\az\in\CG_k}\lf|\lz_{Q_{\az}^{k+1}}\r|
\wz{\mathbf 1}_{Q_\az^{k+1}}\r)^q\r]^{1/q}\r\|_{L^p(X)}\sim\|\lz\|_{\dot f^s_{p,q}}.
\end{align*}
This finishes the proof of Case 3.1).

{\it Case 3.2) $\min\{p,q\}\le 1$.} In this case, $J=\omega/\min\{p,q\}$. Let $\lz:=\{\lz_Q\}_{Q\in\wD}\in\dot f^s_{p,q}$.
By \eqref{eq-1.2} with $p$ therein replaced by $\min\{p,q\}$, we conclude that,
for any fixed $r\in(0,1]$, and any $k\in\zz$,
\begin{align*}
&\sum_{\az_0\in\CG_{k_0}}\lf|(A_0\lz)_{Q_{\az_0}^{k_0+1}}\r|\wz{\mathbf 1}_{Q_{\az_0}^{k_0+1}}\\
&\quad\ls\sum_{k=-\fz}^{k_0}\dz^{(k_0-k)s}\dz^{(k_0-k)\ez/2}\sum_{\az\in\CG_k}
\mu\lf(Q_{\az}^{k+1}\r)P_{\ez+\omega(1/\min\{p,q\}-1)}\lf(y_{\az_0}^{k_0},y_\az^k;\dz^k\r)
\lf[\mu\lf(Q_\az^{k+1}\r)\r]^{-1/2}\lf|\lz_{Q_{\az}^{k+1}}\r|.
\end{align*}
From this and Lemma \ref{lem-btl}, it then follows that, for any $k_0\in\zz$,
\begin{equation*}
\sum_{\az_0\in\CG_{k_0}}\lf|(A_0\lz)_{Q_{\az_0}^{k_0+1}}\r|\wz{\mathbf 1}_{Q_{\az_0}^{k_0+1}}
\ls\sum_{k=-\fz}^{k_0}\dz^{(k_0-k)s}\dz^{(k_0-k)\ez/2}
\lf[\CM\lf(\sum_{\az\in\CG_k}\lf|\lz_{Q_{\az}^{k+1}}\wz{\mathbf 1}_{Q_\az^{k+1}}\r|^r\r)\r]^{1/r},
\end{equation*}
where $r\in(r_{p,q,\ez},1]$ with $r_{p,q,\ez}:={\omega}/[{\omega+\omega(1/\min\{p,q\}-1)+\ez}]$.
Using this and the H\"{o}lder inequality when $q\in(1,\fz]$,
or Lemma \ref{lem-pin} when $q\in(0,1]$, we find that
\begin{align}\label{eq-3.1}
&\lf\{\sum_{k_0=-\fz}^\fz\dz^{-k_0sq}\lf[\sum_{\az_0\in\CG_{k_0}}\lf|(A_0\lz)_{Q_{\az_0}^{k_0+1}}\r|
\wz{\mathbf 1}_{Q_{\az_0}^{k_0+1}}\r]^q\r\}^{1/q}\noz\\
&\quad\lesssim\lf\{\sum_{k=-\fz}^\fz\dz^{-ksq}\lf[\CM\lf(\sum_{\az\in\CG_k}\lf|\lz_{Q_{\az}^{k+1}}
\wz{\mathbf 1}_{Q_\az^{k+1}}\r|^r\r)\r]^{q/r}\r\}^{1/q}.
\end{align}
Due to $r_{p,q,\ez}<\min\{p,q\},$
we may choose $r$ such that $r\in(r_{p,q,\ez},\min\{p,q\})$
and, by this, \eqref{eq-3.1}, and Lemma \ref{lem-fs},
we conclude that
\begin{align*}
\|A_0\lz\|_{\dot f^s_{p,q}}
&\ls\lf\|\lf\{\sum_{k=-\fz}^\fz\dz^{-ksq}\lf[\CM\lf(\sum_{\az\in\CG_k}\lf|\lz_{Q_{\az}^{k+1}}
\wz{\mathbf 1}_{Q_\az^{k+1}}\r|^r\r)\r]^{q/r}\r\}^{1/q}\r\|_{L^p(X)}\\
&\ls\lf\|\lf[\sum_{k=-\fz}^\fz\dz^{-ksq}\lf(\sum_{\az\in\CG_k}\lf|\lz_{Q_{\az}^{k+1}}
\wz{\mathbf 1}_{Q_\az^{k+1}}\r|^r\r)^{q/r}\r]^{r/q}\r\|_{L^{p/r}(X)}^{1/r}\sim\|\lz\|_{\dot f^s_{p,q}}.
\end{align*}
This is the desired conclusion in this case, which, together with the conclusion in Case 3.1),
then completes the proof of the boundedness of $A_0$ on $\dot f^s_{p,q}$.

Finally, we establish the boundedness of $A_1$ on $\dot f^s_{p,q}$ also by
considering the following two cases on
$\min\{p,q\}$.

{\it Case 4.1) $\min\{p,q\}>1$}. In this case, $J=\omega$. Let $\lz:=\{\lz_Q\}_{Q\in\wD}\in\dot f^s_{p,q}$.
By \eqref{eq-2.1} and Lemma \ref{lem-btl}, we find that, for any $k_0\in\zz$,
\begin{align*}
\sum_{\az_0\in\CG_{k_0}}\lf|(A_1\lz)_{Q_{\az_0}^{k_0+1}}\r|\wz{\mathbf 1}_{Q_{\az_0}^{k_0+1}}
&\ls\sum_{k=k_0+1}^{\fz}\sum_{\az\in\CG_k}\dz^{(k_0-k)s}\dz^{(k-k_0)\ez/2}
\mu\lf(Q_{\az}^{k+1}\r)\sum_{\az_0\in\CG_{k_0}}P_\ez\lf(y_{\az_0}^{k_0},y_\az^k;\dz^{k_0}\r)\\
&\qquad\times\lf[\mu\lf(Q_\az^{k+1}\r)\r]^{-1/2}\lf|\lz_{Q_{\az}^{k+1}}\r|\mathbf 1_{Q_{\az_0}^{k_0+1}}\\
&\ls\sum_{k=k_0+1}^\fz\dz^{(k_0-k)s}\dz^{(k-k_0)\ez/2}
\CM\lf(\sum_{\az\in\CG_k}\lf|\lz_{Q_{\az}^{k+1}}\r|\wz{\mathbf 1}_{Q_\az^{k+1}}\r).
\end{align*}
Thus, from the H\"{o}lder inequality, it follows that
\begin{equation*}
\lf\{\sum_{k_0\in\zz}\dz^{k_0sq}\lf[\sum_{\az_0\in\CG_{k_0}}\lf|(A_1\lz)_{Q_{\az_0}^{k_0+1}}\r|
\wz{\mathbf 1}_{Q_{\az_0}^{k_0+1}}\r]^q\r\}^{1/q}
\ls\lf\{\sum_{k=-\fz}^\fz\dz^{-ksq}\lf[\CM\lf(\sum_{\az\in\CG_k}\lf|\lz_{Q_{\az}^{k+1}}
\wz{\mathbf 1}_{Q_\az^{k+1}}\r|\r)\r]^q\r\}^{1/q}.
\end{equation*}
By this and Lemma \ref{lem-fs}, we further conclude that
\begin{align*}
\|A_1\lz\|_{\dot f^s_{p,q}}
&\ls\lf\|\lf\{\sum_{k=-\fz}^\fz\dz^{-ksq}\lf[\CM\lf(\sum_{\az\in\CG_k}\lf|\lz_{Q_{\az}^{k+1}}
\wz{\mathbf 1}_{Q_\az^{k+1}}\r|\r)\r]^q\r\}^{1/q}\r\|_{L^p(X)}\\
&\ls\lf\|\lf[\sum_{k=-\fz}^\fz\dz^{-ksq}\lf(\sum_{\az\in\CG_k}\lf|\lz_{Q_{\az}^{k+1}}
\wz{\mathbf 1}_{Q_\az^{k+1}}\r|\r)^q\r]^{1/q}\r\|_{L^p(X)}\sim\|\lz\|_{\dot f^s_{p,q}}.
\end{align*}
This shows the desired conclusion in this case.

{\it Case 4.2) $\min\{p,q\}\le 1$}. In this case, $J=\omega/\min\{p,q\}$. Let $\lz:=\{\lz_Q\}_{Q\in\wD}\in\dot f^s_{p,q}$.
By \eqref{eq-2.2} with $p$ therein replaced by $\min\{p,q\}$, we find that, for any $k_0\in\zz$,
\begin{align}\label{eq-add}
&\sum_{\az_0\in\CG_{k_0}}\lf|(A_1\lz)_{Q_{\az_0}^{k_0+1}}\r|\wz{\mathbf 1}_{Q_{\az_0}^{k_0+1}}\noz\\
&\quad\ls\sum_{k=k_0+1}^{\fz}\sum_{\az\in\CG_k}\dz^{(k_0-k)s}\dz^{(k-k_0)[\frac {\ez}2+\omega(\frac 1{\min\{p,q\}}-1)]}
\mu\lf(Q_{\az}^{k+1}\r)\sum_{\az_0\in\CG_{k_0}}P_{\ez+
\omega(\frac 1{\min\{p,q\}}-1)}\lf(y_{\az_0}^{k_0},y_\az^k;\dz^{k_0}\r)\noz\\
&\quad\qquad\times\lf[\mu\lf(Q_\az^{k+1}\r)\r]^{-1/2}\lf|\lz_{Q_{\az}^{k+1}}\r|\mathbf 1_{Q_{\az_0}^{k_0+1}}\noz\\
&\quad\ls\sum_{k=k_0+1}^{\fz}\sum_{\az\in\CG_k}\dz^{(k_0-k)s}\dz^{(k-k_0)\ez/2}
\mu\lf(Q_{\az}^{k+1}\r)P_{\ez+\omega(1/\min\{p,q\}-1)}\lf(y_{\az_0}^{k_0},y_\az^k;\dz^{k_0}\r)\noz\\
&\quad\qquad\times\lf[\mu\lf(Q_\az^{k+1}\r)\r]^{-1/2}\lf|\lz_{Q_{\az}^{k+1}}\r|\mathbf 1_{Q_{\az_0}^{k_0+1}}\noz\\
&\quad\ls\sum_{k=k_0+1}^{\fz}\sum_{\az\in\CG_k}\dz^{(k_0-k)s}\dz^{(k-k_0)\ez/2}
\lf[\CM\lf(\sum_{\az\in\CG_k}\lf|\lz_{Q_{\az}^{k+1}}\wz{\mathbf 1}_{Q_\az^{k+1}}\r|^r\r)\r]^{1/r}.
\end{align}
Here, since
$r_{p,q,\ez}<\min\{p,q\},$ we may choose $r\in(0,\min\{p,q\})$ in the above inequality such that
$r>r_{p,q,\ez},$
and then use Lemma \ref{lem-btl}.
Thus, by \eqref{eq-add} and the H\"{o}lder inequality when $q\in(1,\fz]$, or Lemma \ref{lem-pin} when $q\in(0,1]$, we
conclude that
\begin{align*}
&\lf\{\sum_{k_0=-\fz}^\fz\dz^{-k_0sq}\lf[\sum_{\az_0\in\CG_{k_0}}\lf|(A_1\lz)_{Q_{\az_0}^{k_0+1}}\r|
\wz{\mathbf 1}_{Q_{\az_0}^{k_0+1}}\r]^q\r\}^{1/q}\\
&\quad\ls\lf\{\sum_{k=-\fz}^{\fz}\sum_{\az\in\CG_k}\dz^{-ksq}
\lf[\CM\lf(\sum_{\az\in\CG_k}\lf|\lz_{Q_{\az}^{k+1}}
\wz{\mathbf 1}_{Q_\az^{k+1}}\r|^r\r)\r]^{q/r}\r\}^{1/q}.
\end{align*}
Finally, we use Lemma \ref{lem-fs} to obtain
\begin{align*}
\|A_1\lz\|_{\dot f^s_{p,q}}
&\ls\lf\|\lf\{\sum_{k=-\fz}^{\fz}\dz^{-ksq}\lf[\CM\lf(\sum_{\az\in\CG_k}\lf|\lz_{Q_{\az}^{k+1}}
\wz{\mathbf 1}_{Q_\az^{k+1}}\r|^r\r)\r]^{q/r}\r\}^{r/q}\r\|_{L^{p/r}(X)}^{1/r}\\
&\ls\lf\|\lf[\sum_{k=-\fz}^{\fz}\dz^{-ksq}\lf(\sum_{\az\in\CG_k}\lf|\lz_{Q_{\az}^{k+1}}
\wz{\mathbf 1}_{Q_\az^{k+1}}\r|^q\r)\r]^{r/q}\r\|_{L^{p/r}(X)}^{1/r}\sim\|\lz\|_{\dot f^s_{p,q}}.
\end{align*}
which is the desired estimate in this case.

To summarize Cases 4.1) and 4.2), we obtain the boundedness of $A_1$ on $\dot f^s_{p,q}$. Combining all the conclusions
of Cases 1.1) through 4.2), we then complete the proof of Theorem \ref{thm-ado}.
\end{proof}

\section[Molecular characterization of Besov and Triebel--Lizorkin spaces via wavelets]
{Molecular characterization of Besov and Triebel--Lizorkin \\ spaces via wavelets}
\label{s-mol}

In this section, we consider the molecular characterization of Besov and Triebel--Lizorkin spaces.

First, we introduce the following notion of molecules. In what
follows, for any given $Q\in\wD$, as before, we use $\ell(Q)$ to denote its ``\emph{side-length}'' and
$x_Q$ its ``\emph{center}''.

\begin{definition}\label{def-mol}
Let $Q\in\wz{\mathcal D}$ with $\widetilde{D}$ as in \eqref{4.1x},
and $(\bz,\Gamma)\in(0,\fz)^2$. A function $b_Q$ on $X$
is called a \emph{molecule of type $(\bz,\Gamma)$
centered at $Q$} [for short, \emph{$(\bz,\Gamma)$-molecule}] if $b_Q$ satisfies the following conditions:
\begin{enumerate}
\item (the \emph{size condition}) for any $x\in X$, $|b_Q(x)|\le[\mu(Q)]^{1/2}P_\Gamma(x_Q,x;\ell(Q))$;
\item (the \emph{H\"older regularity condition}) for any $x,\ x'\in X$ with
$d(x,x')\le(2A_0)^{-1}[\ell(Q)+d(y_Q,x)]$,
$$
|b_Q(x)-b_Q(x')|\le[\mu(Q)]^{1/2}\lf[\frac{d(x,x')}{\ell(Q)+d(x_Q,x)}\r]^\bz P_\Gamma(x_Q,x;\ell(Q));
$$
\item (the \emph{cancellation  condition}) $\int_X b_Q(x)\,d\mu(x)=0$.
\end{enumerate}
\end{definition}

Observe that any molecule in Definition \ref{def-mol} centers at a subtly selected cube $Q\in\wD$ with $\wD$ as in
\eqref{4.1x}. It is obvious that $\wD$ may not contain all dyadic cubes of $X$ in Lemma \ref{lem-cube} and,
indeed, $\wD$ is the set of all ``supports'' of wavelet functions $\{\psi_\az^k\}_{k\in\zz,\ \az\in\CG_k}$
constructed in \cite{ah13} (see also Lemma \ref{lem-wave}).

We first prove the following proposition. In what follows, for any $Q:=Q_\az^{k+1}\in\wD$
with some $k\in\zz$ and $\az\in\CG_k$, let $\psi_Q:=\psi_\az^k$.

\begin{proposition}\label{prop-bB}
Let $s$, $p$, $q$, $\bz$, and $\gz$ be the same as in Definition \ref{def-btl}(i) and $\eta$ the same
as in Definition \ref{def-eti}.
Suppose that $\lz:=\{\lz_Q\}_{Q\in\wD}\in \dot b^s_{p,q}$ and that $\{b_Q\}_{Q\in\wz{\mathcal D}}$
are $(\bz,\gz)$-molecules centered, respectively, at $\{Q\}_{Q\in\wz\CD}$. Then there exists an $f\in (\GOO{\bz,\gz})'$
such that $f=\sum_{Q\in\wz\CD}\lz_Q b_Q$ in $(\GOO{\bz,\gz})'$, and $f\in\dot B^s_{p,q}(X)$.
Moreover, there exists a positive constant $C$,
independent of $\{\lz_Q\}_{Q\in\wD}$ and $\{b_Q\}_{Q\in\wD}$, such that
$\|f\|_{\dot B^s_{p,q}(X)}\le C\|\lz\|_{\dot b^s_{p,q}}$.
\end{proposition}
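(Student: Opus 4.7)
The plan is to reduce the claim to the boundedness of an almost diagonal operator (Theorem \ref{thm-ado}) via the wavelet characterization of $\dot B^s_{p,q}(X)$ established in Theorem \ref{thm-wavebtl}(i). First, for each $P,Q\in\wD$ set $A_{P,Q}:=\langle b_Q,\psi_P\rangle$, where $\psi_P=\psi_\az^k$ when $P=Q_\az^{k+1}$. The formal candidate for $f$ is $f:=\sum_{Q\in\wD}\lz_Q b_Q$; assuming the convergence can be justified in $(\GOO{\bz,\gz})'$, the wavelet coefficients of $f$ satisfy
\[
\lf\langle f,\psi_P\r\rangle=\sum_{Q\in\wD}\lz_Q A_{P,Q}=(A\lz)_P,
\]
where $A=\{A_{P,Q}\}_{P,Q\in\wD}$. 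By Theorem \ref{thm-wavebtl}(i), $\|f\|_{\dot B^s_{p,q}(X)}\sim\|A\lz\|_{\dot b^s_{p,q}}$, so it suffices to show that $A$ is an almost diagonal operator on $\dot b^s_{p,q}$ with bound independent of $\lz$ and $\{b_Q\}$, and then invoke Theorem \ref{thm-ado}.

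The core step is the pointwise bound on $A_{P,Q}$. I will split according to the relative size of $\ell(P)$ and $\ell(Q)$. When $\ell(P)\le\ell(Q)$, the plan is to use the cancellation condition of the wavelet $\psi_P$ (Lemma \ref{lem-wave}(iii)) to subtract $b_Q(x_P)$ from $b_Q(x)$ in the pairing and then apply the H\"older regularity of the molecule $b_Q$ (Definition \ref{def-mol}(ii)) together with the exponential decay of $\psi_P$; this yields an extra factor $[\ell(P)/\ell(Q)]^{\bz\wedge\eta}$. When $\ell(P)>\ell(Q)$, the roles are reversed: one uses the cancellation of $b_Q$ (Definition \ref{def-mol}(iii)) and the H\"older regularity of $\psi_P$ (Lemma \ref{lem-wave}(ii)), producing an extra factor $[\ell(Q)/\ell(P)]^\eta$. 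In both cases, the remaining geometric integral over $X$ is controlled by Lemma \ref{lem-btl} (or a variant that upgrades exponential decay to polynomial decay) and furnishes a Poisson-type kernel $P_{\gz'}(x_P,x_Q;\max\{\ell(P),\ell(Q)\})$ with $\gz'$ as large as one wishes. Putting these together, one obtains, with $\theta:=\bz\wedge\gz\wedge\eta$,
\[
|A_{P,Q}|\ls[\mu(P)\mu(Q)]^{1/2}\lf[\frac{\ell(P)\wedge\ell(Q)}{\ell(P)\vee\ell(Q)}\r]^{\theta}
P_{\gz'}(x_P,x_Q;\ell(P)\vee\ell(Q)).
\]

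To recast this as $|A_{P,Q}|\ls\mathfrak{M}_{P,Q}(\ez)$ in the sense of Definition \ref{def-ado}, one writes $[\ell(P)\wedge\ell(Q)/\ell(P)\vee\ell(Q)]^\theta=[\ell(P)/\ell(Q)]^s\cdot[\ell(P)\wedge\ell(Q)/\ell(P)\vee\ell(Q)]^{\theta}\cdot[\ell(P)/\ell(Q)]^{-s}$ and absorbs the $[\ell(P)/\ell(Q)]^{\pm s}$ into the $\min\{\cdot,\cdot\}$ factor of \eqref{eq-defopq}. The constraints on $\bz,\gz$ in \eqref{eq-bzgz} (namely $\bz>-s+\omega_0(1/p-1)_+$ and $\gz>s$, both strictly below $\eta$) guarantee that a positive $\ez$ exists for which $\theta\ge\ez/2+\max\{s,J-\omega-s\}$ once one chooses $J=\omega/\min\{1,p\}$; moreover $\gz'$ can be taken to be $\ez+J-\omega$. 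Hence $A$ is an almost diagonal operator on $\dot b^s_{p,q}$, and Theorem \ref{thm-ado} gives $\|A\lz\|_{\dot b^s_{p,q}}\ls\|\lz\|_{\dot b^s_{p,q}}$.

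It remains to justify that $f=\sum_{Q\in\wD}\lz_Q b_Q$ converges in $(\GOO{\bz,\gz})'$. For any $\vz\in\GOO{\bz,\gz}$, the pairing $|\langle b_Q,\vz\rangle|$ obeys an estimate of exactly the same shape as $|A_{P,Q}|$ above (the argument is identical, with $\vz$ playing the role of $\psi_P$ and $Q_{x_0}^0$ its reference cube); combining this with the H\"older inequality and the $\dot b^s_{p,q}$-summability of $\{\lz_Q\}$ shows that $\sum_Q\lz_Q\langle b_Q,\vz\rangle$ converges absolutely, uniformly for $\vz$ in bounded subsets of $\GOO{\bz,\gz}$, so $f$ is a well-defined element of $(\GOO{\bz,\gz})'$. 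Combining the three displays gives $\|f\|_{\dot B^s_{p,q}(X)}\ls\|\lz\|_{\dot b^s_{p,q}}$, as required. The main obstacle is verifying the almost diagonal decay of $A_{P,Q}$ uniformly in both cases $\ell(P)\lessgtr\ell(Q)$ with the same $(\bz,\gz)$, since the admissible range of $\ez$ is controlled jointly by the regularity exponent of the molecule, the decay exponent $\gz$, and the parameters $s,p$ entering $J$ in Definition \ref{def-ado}; this is exactly what forces the ranges \eqref{eq-bzgz} of $\bz,\gz$.
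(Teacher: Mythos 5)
Your strategy is essentially the paper's: identify the wavelet coefficients of $f$ with $(A\lz)_P$ where $A_{P,Q}=\langle b_Q,\psi_P\rangle$, prove almost diagonality of $A$ by a two-case cancellation/regularity argument, and conclude via Theorem \ref{thm-wavebtl}(i) and Theorem \ref{thm-ado}. The only structural difference is in the convergence step: the paper encodes $\sum_Q|\lz_Q||\langle b_Q,\vz\rangle|$ as the action of a rank-one almost diagonal operator so that Theorem \ref{thm-ado} can be reused, whereas you propose to sum the pairing estimates directly; that also works, but note that for $p\le1$ you need the $p$-power trick together with Lemma \ref{lem-bbes}, not just the H\"older inequality, and that $\vz$ has only polynomial decay and $\bz$-regularity, so the gain exponent there is some $\bz'<\bz\wedge\gz$ rather than what the wavelet case gives.

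Two of your quantitative claims are wrong as stated, though repairable without new ideas. First, in the case where the wavelet cube is the larger one, the gain is not $[\ell(Q)/\ell(P)]^{\eta}$: after using the cancellation of $b_Q$ and the $\eta$-H\"older regularity of $\psi_P$, the integral $\int [d(y,x_Q)/\ell(P)]^{\eta}\,|b_Q(y)|\,d\mu(y)$ is capped by the molecule's polynomial decay, so one only gains $[\ell(Q)/\ell(P)]^{\gz}$, the excess growth in $d(x_P,x_Q)$ being absorbed into the wavelet's exponential factor; this is precisely why \eqref{eq-lQ>lP} carries the exponent $\gz$. Second, the off-diagonal decay cannot be ``$P_{\gz'}$ with $\gz'$ as large as one wishes'': since the molecule has only $\gz$ polynomial decay, the pairing decays no faster than $P_\gz(x_P,x_Q;\ell(P)\vee\ell(Q))$. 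Hence the almost-diagonal exponent $\ez+J-\omega$ must itself satisfy $\ez+J-\omega\le\gz$, i.e.\ your list of constraints is missing $\ez\le\gz-\omega(1/p-1)_+$; the paper's admissible range is $\ez\in(0,\min\{\gz-\omega(1/p-1)_+,\,2[s+\gz-\omega(1/p-1)_+],\,2(\bz-s)\})$. Because \eqref{eq-bzgz} (with $\omega$ chosen close to $\omega_0$) gives $\gz>\omega(1/p-1)_+$, this extra constraint is harmless, and your unified bound with $\theta=\bz\wedge\gz$ still yields almost diagonality once the decay exponent is corrected to $\gz$. Finally, Lemma \ref{lem-btl} is not the relevant tool for estimating a single pairing $\langle b_Q,\psi_P\rangle$ (it controls sums over cubes by the maximal function); the needed bound is a direct size/regularity integral computation as in \cite[Lemma 3.9]{whhy20}.
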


\begin{proof}
Let all the notation be the same as in this proposition. We now prove the first
conclusion of this proposition. To this end,
we claim that, for any $\vz\in\GOO{\bz,\gz}$
with $\|\vz\|_{\GOO{\bz,\gz}}\le 1$,
\begin{equation}\label{eq-dis}
\sum_{Q\in\wD}|\lz_Q||\langle b_Q,\vz\rangle|\ls \|\lz\|_{\dot b^s_{p,q}}.
\end{equation}
Indeed, if \eqref{eq-dis} holds true, then, by the completeness of $(\GOO{\bz,\gz})'$, we find that there
exists an $f\in (\GOO{\bz,\gz})'$ such that $f=\sum_{Q\in\wD}\lz_Q b_Q$ in $(\GOO{\bz,\gz})'$, which is the
desired conclusion.

Now, we show \eqref{eq-dis}. Without loss of generality, we may assume that $x_0\in Q_{\az_0}^0$ for some
$\az\in\CG_{-1}\subset\CA_{0}$. Observe that, if we define $|\lz|:=\{|\lz|_Q\}_{Q\in\wz\CD}$ by setting, for any $Q\in\wz\CD$,
$|\lz|_Q:=|\lz_Q|$, we find that $\|\,|\lz|\,\|_{\dot{b}^s_{p,q}}=\|\lz\|_{\dot{b}^s_{p,q}}$. For any
$Q,\ P\in\wz\CD$, we let
$$
A_{Q,P}^{(1)}:=\begin{cases}
|\langle b_P,\vz\rangle| & \textup{if }Q=Q_0, \\
0 & \textup{if } Q\neq Q_0,
\end{cases}
$$
and $A^{(1)}:=\{A^{(1)}_{Q,P}\}_{Q,\ P\in\wz\CD}$. Then
$$
\sum_{P\in\wD}|\lz_P||\langle b_P,\vz\rangle|=\lf(A^{(1)}|\lz|\r)_{Q_0}
\sim\lf\|A^{(1)}|\lz|\r\|_{\dot b^s_{p,q}}.
$$
If we can show that $A^{(1)}$ is an almost diagonal operator on $\dot b^s_{p,q}$, then,
from Theorem \ref{thm-ado}, it follows that
$$
\sum_{P\in\wD}|\lz_P|\langle b_P,\vz\rangle|\sim\lf\|A^{(1)}|\lz|\r\|_{\dot b^s_{p,q}}\ls\|\,|\lz|\,\|_{\dot b^s_{p,q}}
\sim\|\lz\|_{\dot b^s_{p,q}},
$$
which is just \eqref{eq-dis}.

Next, we show that $A^{(1)}$ is an almost diagonal operator on $\dot b^s_{p,q}$. To this end, we suppose
$P:=Q_\az^{k+1} $ with some $k\in\zz$ and $\az\in\CG_k\subset\CA_{k+1}$. When $Q\neq Q_{\az_0}^0$, we
have $A^{(1)}_{Q,P}=0$. Otherwise, similarly to the proof of \cite[Lemma 3.9]{whhy20}, we have
\begin{equation*}
\lf|A^{(1)}_{Q,P}\r|=|\langle b_Q,\vz\rangle|\ls\dz^{|k|\bz'}
\lf[\mu\lf(Q_\az^{k+1}\r)\r]^{1/2} P_\gz\lf(x_0,y_\az^k,\dz^{k\wedge 0}\r),
\end{equation*}
where $\bz'\in(0,\bz\wedge\gz)$.
Thus, we find that
$A^{(1)}$ is an almost diagonal operator on $\dot b^s_{p,q}$, which implies  that
$f:=\sum_{P\in\wD}\lz_P b_P$ converges in $(\GOO{\bz,\gz})'$, and hence completes the proof
of the first conclusion of this proposition.

Now, we prove that $\|f\|_{\dot B^s_{p,q}(X)}\ls\|\lz\|_{\dot b^s_{p,q}}$. Indeed,
since $f=\sum_{P\in\wD}\lz_Pb_P$ in $(\GOO{\bz,\gz})'$, it then follows that, for any $Q\in\wD$,
$$
\lf<f,\psi_Q\r>=\sum_{P\in\wD}\lz_P\lf\langle b_P,\psi_Q\r\rangle.
$$
If we let $A_{Q,P}:=\langle b_P,\psi_Q\rangle$, we then
find that, for any $Q,\ P\in\wD$ and $A:=\{A_{Q,P}\}_{Q,\ P\in\wD}$,
$$
\lf\langle f,\psi_Q\r\rangle=\sum_{P\in\wD}A_{Q,P}\lz_P=(A\lz)_Q,
$$
which, together with Theorem \ref{thm-wavebtl}(i), further implies that
\begin{equation}\label{eq-5}
\|f\|_{\dot B^s_{p,q}(X)}\sim\lf\|\lf\{\lf< f,\psi_Q\r>\r\}_{Q\in\wD}\r\|_{\dot b^s_{p,q}}\sim
\|A\lz\|_{\dot b^s_{p,q}}.
\end{equation}
We next claim that $A$ is an almost diagonal operator on $\dot b^s_{p,q}$.
Assuming this for the moment, by \eqref{eq-5} and Theorem \ref{thm-ado}, we find that
\begin{equation*}
\|f\|_{\dot B^s_{p,q}(X)}\sim\|A\lz\|_{\dot b^s_{p,q}}\ls\|\lz\|_{\dot b^s_{p,q}},
\end{equation*}
which is the desired conclusion.

It remains to show the above claim. To this end, letting $Q,\ P\in\wD$,
we then consider the following two cases on
$\ell(Q)$ and $\ell(P)$.

{\it Case 1) $\ell(Q)\ge\ell(P)$.} In this case, by the cancellation of $b_P$, we have
\begin{align}\label{eq-bp1}
\lf|\lf<b_P,\psi_Q\r>\r|&\le\int_{W_1}\lf|b_P(y)\r|\lf|\psi_Q(y)-\psi_Q(x_P)\r|\,d\mu(y)+\int_{W_2}
\lf|b_P(y)\psi_Q(y)\r|\,d\mu(y)\noz\\
&\qquad+|\psi_Q(x_P)|\int_{W_2}|b_P(y)|\,d\mu(y),
\end{align}
where $W_1:=\{y\in X:\ d(y,x_P)\le(2A_0)^{-1}[\ell(Q)+d(x_Q,x_P)]\}$ and $W_2:=W_1^\complement$.

We first deal with the  integral on $W_1$ of \eqref{eq-bp1}. By the size condition of $b_P$,
the regularity of $\psi_Q$, and $\gz<\eta$, we find that
\begin{align*}
&\int_{W_1}\lf|b_P(y)\r|\lf|\psi_Q(y)-\psi_Q(x_P)\r|\,d\mu(y)\\
&\quad\ls[\mu(P)]^{1/2}\int_{W_1}\lf[\frac{d(y,x_P)}{\ell(Q)}\r]^\eta\frac 1{\sqrt{V_{\ell(Q)}(x_P)}}
\exp\lf\{-\nu\lf[\frac{d(x_Q,x_P)}{\ell(Q)}\r]^a\r\}P_\gz(x_P,y;\ell(P))\,d\mu(y)\\
&\quad\ls\lf[\frac{\ell(P)}{\ell(Q)}\r]^\gz[\mu(Q)\mu(P)]^{1/2}\frac{1}{V_{\ell(Q)}(x_Q)+V(x_Q,x_P)}
\exp\lf\{-\nu'\lf[\frac{d(x_Q,x_P)}{\ell(Q)}\r]^a\r\}.
\end{align*}
Now, we consider the second integral of  \eqref{eq-bp1}. By the size conditions of $\psi_Q$ and $b_P$, we have
\begin{equation*}
\int_{W_2}\lf|b_P(y)\psi_Q(y)\r|\,d\mu(y)\ls\lf[\frac{\ell(P)}{\ell(Q)}\r]^\gz[\mu(Q)\mu(P)]^{-1/2}P_\gz(x_P,y;\ell(Q)).
\end{equation*}
Finally, for the last integral of \eqref{eq-bp1}, from the size conditions of $\psi_Q$ and $b_P$, we deduce that
\begin{align*}
&|\psi_Q(x_P)|\int_{W_2}|b_P(y)|\,d\mu(y)\\
&\quad\ls\lf[\frac{\ell(P)}{\ell(Q)}\r]^\gz[\mu(Q)\mu(P)]^{-1/2}\frac 1{V_{\ell(Q)}(x_Q)+V(x_Q,x_P)}
\exp\lf\{-\nu'\lf[\frac{d(y,x_Q)}{\ell(Q)}\r]^a\r\}.
\end{align*}

Combining the above three estimates, we find that
\begin{align}\label{eq-lQ>lP}
\lf|\lf<b_P,\psi_Q\r>\r|
&\ls\lf[\frac{\ell(P)}{\ell(Q)}\r]^\gz[\mu(Q)\mu(P)]^{-1/2}P_\gz(x_Q,x_P;\ell(Q))\noz\\
&\sim\lf[\frac{\ell(Q)}{\ell(P)}\r]^s[\mu(Q)\mu(P)]^{-1/2}P_\gz(x_Q,x_P;\ell(Q))\lf[\frac{\ell(P)}{\ell(Q)}\r]^{s+\gz}.
\end{align}
This is the desired estimate in this case.

{\it Case 2) $\ell(Q)<\ell(P)$.} In this case, we let
$W_3:=\{y\in X:\ d(y,x_Q)\le(2A_0)^{-1}[\ell(P)+d(x_Q,x_P)]\}$ and $W_4:=W_3^\complement$. Then, by the
cancellation of $\psi_Q$, we have
\begin{align*}
\lf|\lf<\psi_Q,b_P\r>\r|&\le\int_{W_3}\lf|\psi_Q(y)\r|\lf|b_P(y)-b_P(x_Q)\r|\,d\mu(y)
+\int_{W_4}\lf|\psi_Q(y)b_P(y)\r|\,d\mu(y)\\
&\qquad+\lf|b_P(x_Q)\r|\int_{W_4}\lf|\psi_Q(y)\r|\,d\mu(y)\\
&=:\RJ_1+\RJ_2+\RJ_3.
\end{align*}

For $\RJ_1$, by the regularity of $b_P$ and the size condition of $\psi_Q$, we obtain
\begin{align*}
\RJ_1&\ls[\mu(Q)\mu(P)]^{1/2}P_\gz(x_Q,x_P;\ell(Q))\\
&\qquad\times\int_{W_3}\lf[\frac{d(y,x_Q)}{\ell(P)+d(x_Q,x_P)}\r]^\bz\frac 1{V_{\ell(Q)}(x_Q)}
\exp\lf\{-\nu\lf[\frac{d(y,x_Q)}{\ell(Q)}\r]^a\r\}\,d\mu(y).
\end{align*}
To estimate the above integral on $W_3$, we let $W_{3,1}:=\{y\in W_3:\ d(y,x_Q)\le\ell(Q)\}$ and
$W_{3,2}:=W_3\setminus W_{3,1}$, and conclude that
\begin{align*}
&\int_{W_{3,1}}\lf[\frac{d(y,x_Q)}{\ell(P)+d(x_Q,x_P)}\r]^\bz\frac 1{V_{\ell(Q)}(x_Q)}
\exp\lf\{-\nu\lf[\frac{d(y,x_Q)}{\ell(Q)}\r]^a\r\}\,d\mu(y)
\ls\lf[\frac{\ell(Q)}{\ell(P)}\r]^\bz.
\end{align*}
Moreover, for the integral on $W_{3,2}$, we have
\begin{align*}
&\int_{W_{3,2}}\lf[\frac{d(y,x_Q)}{\ell(P)+d(x_Q,x_P)}\r]^\bz\frac 1{V_{\ell(Q)}(x_Q)}
\exp\lf\{-\nu\lf[\frac{d(y,x_Q)}{\ell(Q)}\r]^a\r\}\,d\mu(y)\\
&\quad\ls\lf[\frac{\ell(Q)}{\ell(P)}\r]^\bz\int_{d(y,x_Q)>\ell(Q)}\frac{1}{V(x_Q,y)}
\lf[\frac{\ell(Q)}{d(y,x_Q)}\r]^{L-\bz}\,d\mu(y)\ls\lf[\frac{\ell(Q)}{\ell(P)}\r]^\bz,
\end{align*}
where we chose $L\in(\bz,\fz)$.
By the above inequalities, we find that
\begin{equation*}
\RJ_1\ls\lf[\frac{\ell(Q)}{\ell(P)}\r]^\bz [\mu(Q)\mu(P)]^{1/2}P_{\gz}(x_Q,x_P;\ell(P)),
\end{equation*}
which is the desired estimate on $\RJ_1$.

For $\RJ_2$, we use the size conditions of $\psi_Q$ and $b_P$ to conclude that
\begin{align*}
\RJ_2&\ls\lf[\frac{\ell(Q)}{\ell(P)}\r]^\Gamma[\mu(Q)\mu(P)]^{1/2}P_{\Gamma}(x_Q,x_P;\ell(P)).
\end{align*}
Here and thereafter, $\Gamma\in(\max\{\bz,\gz\},\fz)$ is a fixed large positive number. This is the desired
estimate on $\RJ_2$.

For $\RJ_3$, again, by the size conditions of $\psi_Q$ and $b_P$, we obtain
\begin{equation*}
\RJ_3\ls\lf[\frac{\ell(Q)}{\ell(P)}\r]^\Gamma[\mu(Q)\mu(P)]^{1/2}P_{\gz}(x_Q,x_P;\ell(P)).
\end{equation*}

Combining the above three estimates, we find that
\begin{equation}\label{eq-lQ<lP}
\lf|\langle\psi_Q, b_P\rangle\r|\ls\lf[\frac{\ell(Q)}{\ell(P)}\r]^s[\mu(Q)\mu(P)]^{1/2}P_{\gz}(x_Q,x_P;\ell(P))
\lf[\frac{\ell(Q)}{\ell(P)}\r]^{\bz-s}.
\end{equation}
This is also the desired estimate in this case.

Finally, by \eqref{eq-lQ>lP}, \eqref{eq-lQ<lP}, $\bz\wedge\gz>|s|$, and $p>p(s,\bz\wedge\gz)$, we find that $A$ is an
almost diagonal operator on $\dot b^s_{p,q}$ with any given
$$
\ez\in\lf(0,\min\lf\{\gz-\omega\lf(\frac 1p-1\r)_+,2\lf[s+\gz-\omega\lf(\frac 1p-1\r)_+\r],2(\bz-s)\r\}\r),
$$
where $\omega\in[\omega_0,\fz)$ satisfies \eqref{eq-doub} and all the assumption of this proposition with $\omega_0$
replaced by $\omega$. This finishes the proof of Proposition \ref{prop-bB}.
\end{proof}

Similarly to the proof of Proposition \ref{prop-bB}, we can obtain the
following conclusion on Triebel--Lizorkin spaces, and we omit the details here.

\begin{proposition}\label{prop-fF}
Let $s$, $p$, $q$, $\bz$, and $\gz$ be the same as in Definition \ref{def-btl}(ii), and $\eta$ the same
as in Definition \ref{def-eti}.
Suppose that $\lz:=\{\lz_Q\}_{Q\in\wD}\in \dot f^s_{p,q}$ and that $\{b_Q\}_{Q\in\wz{\mathcal D}}$ are
$(\bz,\gz)$-molecules centered, respectively, at $\{Q\}_{Q\in\wz\CD}$. Then there exists an $f\in (\GOO{\bz,\gz})'$ such
that $f=\sum_{Q\in\wz\CD}\lz_Q b_Q$ in $(\GOO{\bz,\gz})'$, and $f\in\dot F^s_{p,q}(X)$.
Moreover, there exists a positive constant $C$, independent of
$\{\lz_Q\}_{Q\in\wD}$ and $\{b_Q\}_{Q\in\wD}$, such that
$\|f\|_{\dot F^s_{p,q}(X)}\le C\|\lz\|_{\dot f^s_{p,q}}$.
\end{proposition}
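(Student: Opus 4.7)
The plan is to follow the same two-step strategy used in the proof of Proposition \ref{prop-bB}, replacing the Besov sequence space $\dot b^s_{p,q}$ by $\dot f^s_{p,q}$ throughout and invoking part (ii) of Theorem \ref{thm-wavebtl} and the Triebel--Lizorkin half of Theorem \ref{thm-ado} in place of their Besov counterparts. First, I would establish convergence of $f := \sum_{P \in \wD} \lambda_P b_P$ in $(\GOO{\bz,\gz})'$. For any $\vz \in \GOO{\bz,\gz}$ with $\|\vz\|_{\GOO{\bz,\gz}} \le 1$, I would fix a dyadic cube $Q_0 \ni x_0$ and define the matrix $A^{(1)} := \{A^{(1)}_{Q,P}\}_{Q,P \in \wD}$ by setting $A^{(1)}_{Q,P} := |\langle b_P, \vz\rangle|$ when $Q = Q_0$ and $A^{(1)}_{Q,P} := 0$ otherwise. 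Since the molecules $b_P$ share the size, regularity, and cancellation properties of the test functions treated in \cite[Lemma 3.9]{whhy20}, the same argument yields $|\langle b_P, \vz\rangle| \ls \dz^{|k|\bz'}[\mu(Q_\az^{k+1})]^{1/2} P_\gz(x_0, y_\az^k; \dz^{k \wedge 0})$ for $P = Q_\az^{k+1}$ and some $\bz' \in (0, \bz \wedge \gz)$, which shows that $A^{(1)}$ is an almost diagonal operator on $\dot f^s_{p,q}$. Applying Theorem \ref{thm-ado} then gives
$$
\sum_{P \in \wD} |\lambda_P| |\langle b_P, \vz\rangle| \sim \|A^{(1)}|\lz|\|_{\dot f^s_{p,q}} \ls \|\,|\lz|\,\|_{\dot f^s_{p,q}} \sim \|\lz\|_{\dot f^s_{p,q}},
$$
so that $f$ converges in $(\GOO{\bz,\gz})'$ by completeness.

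Second, I would prove the norm estimate. Setting $A_{Q,P} := \langle b_P, \psi_Q\rangle$ for $Q, P \in \wD$ and $A := \{A_{Q,P}\}_{Q,P \in \wD}$, the convergence in $(\GOO{\bz,\gz})'$ yields $\langle f, \psi_Q\rangle = \sum_{P \in \wD} A_{Q,P} \lz_P = (A\lz)_Q$ for every $Q \in \wD$. Combined with the wavelet characterization in Theorem \ref{thm-wavebtl}(ii), this gives
$$
\|f\|_{\dot F^s_{p,q}(X)} \sim \|\{\langle f, \psi_Q\rangle\}_{Q \in \wD}\|_{\dot f^s_{p,q}} \sim \|A\lz\|_{\dot f^s_{p,q}}.
$$
Thus it suffices to check that $A$ is an almost diagonal operator on $\dot f^s_{p,q}$, for then Theorem \ref{thm-ado} immediately yields $\|A\lz\|_{\dot f^s_{p,q}} \ls \|\lz\|_{\dot f^s_{p,q}}$.

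To verify the almost diagonality of $A$, I would reuse verbatim the two-case analysis from the proof of Proposition \ref{prop-bB}: the estimates \eqref{eq-lQ>lP} when $\ell(Q) \ge \ell(P)$ and \eqref{eq-lQ<lP} when $\ell(Q) < \ell(P)$ depend only on the size, regularity, and cancellation of $\psi_Q$ (from Lemma \ref{lem-wave}) and of $b_P$ (from Definition \ref{def-mol}), and make no use of the ambient sequence space. The main obstacle is therefore purely bookkeeping: the almost diagonal class on $\dot f^s_{p,q}$ uses $J = \omega/\min\{1,p,q\}$ rather than $J = \omega/\min\{1,p\}$, so the admissible range of $\ez$ shrinks. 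Using the hypotheses $p, q > p(s, \bz \wedge \gz)$ and $|s| < \bz \wedge \gz$, one checks that the choice
$$
\ez \in \lf(0, \min\lf\{\gz - \omega\lf(\tfrac{1}{p \wedge q} - 1\r)_+,\, 2\lf[s + \gz - \omega\lf(\tfrac{1}{p \wedge q} - 1\r)_+\r],\, 2(\bz - s)\r\}\r)
$$
remains admissible (the min is strictly positive by the assumptions on $p$, $q$, and the interval $\bz, \gz \in (\ldots, \eta)$), which completes the verification and hence the proof.
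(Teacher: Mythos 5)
Your proposal is correct and coincides with the paper's intended argument: the paper omits the proof of this proposition precisely because it is obtained by repeating the proof of Proposition \ref{prop-bB} with Theorem \ref{thm-wavebtl}(ii) and the Triebel--Lizorkin part of Theorem \ref{thm-ado} in place of their Besov counterparts, the kernel estimates \eqref{eq-lQ>lP} and \eqref{eq-lQ<lP} being unchanged. Your bookkeeping of the admissible $\ez$ with $J=\omega/\min\{1,p,q\}$, justified by $p,\,q>p(s,\bz\wedge\gz)$ and $|s|<\bz\wedge\gz$ (with $\omega$ chosen close to $\omega_0$), is exactly the adjustment required.
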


These propositions, together with the wavelet characterization of Besov and Triebel--Lizorkin spaces
in Theorem \ref{thm-wavebtl}, further imply the following molecular characterization of Besov
and Triebel--Lizorkin spaces; we omit the details.
\begin{theorem}\label{thm-mol}
Let $s$, $p$, $q$, $\bz$, and $\gz$ be as in Definition \ref{def-btl}(i) [resp., Definition \ref{def-btl}(ii)],
and $\eta$ the same as in Definition \ref{def-eti}.
Then $f\in\dot B^s_{p,q}(X)$ [resp., $f\in\dot F^s_{p,q}(X)$] if and only if
there exist a $\lz:=\{\lz_Q\}_{Q\in\wD}\in\dot b^s_{p,q}$ [resp., $\lz:=\{\lz_Q\}_{Q\in\wD}\in\dot f^s_{p,q}$]
and $(\bz,\gz)$-molecules $\{b_Q\}_{Q\in\wD}$ centered, respectively, at $\{Q\}_{Q\in\wD}$
such that $f=\sum_{Q\in\wz\CD}\lz_Q b_Q$ in $(\GOO{\bz,\gz})'$.
Moreover, there exists a constant $C\in[1,\fz)$, independent of $f$, $\lz$, and $\{b_Q\}_{Q\in\wD}$, such that
$C^{-1}\|\lz\|_{\dot b^s_{p,q}}\le \|f\|_{\dot B^s_{p,q}(X)}\le C\|\lz\|_{\dot b^s_{p,q}}$
[resp., $C^{-1}\|\lz\|_{\dot f^s_{p,q}}\le \|f\|_{\dot F^s_{p,q}(X)}\le C\|\lz\|_{\dot f^s_{p,q}}$].
\end{theorem}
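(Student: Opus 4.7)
The plan is to combine the sufficiency supplied by Propositions~\ref{prop-bB} and \ref{prop-fF} with a necessity argument based on Theorem~\ref{thm-wavebtl}, observing that the wavelets $\{\psi_Q\}_{Q\in\wD}$ themselves already constitute a canonical molecular system up to a uniform renormalization.

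The sufficiency direction is essentially immediate: given a sequence $\{\lambda_Q\}_{Q\in\wD}\in\dot b^s_{p,q}$ (resp.\ $\dot f^s_{p,q}$) and $(\bz,\gz)$-molecules $\{b_Q\}_{Q\in\wD}$ as in the statement, Proposition~\ref{prop-bB} (resp.\ Proposition~\ref{prop-fF}) directly yields that $f:=\sum_{Q\in\wD}\lambda_Q b_Q$ converges in $(\GOO{\bz,\gz})'$ to an element of $\dot B^s_{p,q}(X)$ (resp.\ $\dot F^s_{p,q}(X)$) with $\|f\|_{\dot B^s_{p,q}(X)}\ls\|\lambda\|_{\dot b^s_{p,q}}$ (resp.\ $\|f\|_{\dot F^s_{p,q}(X)}\ls\|\lambda\|_{\dot f^s_{p,q}}$).

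For the necessity direction, I would first show that, for a suitable uniform constant $c_0\in(0,\fz)$, each $b_Q:=c_0\psi_Q$ is a $(\bz,\gz)$-molecule centered at $Q$ in the sense of Definition~\ref{def-mol}. The cancellation condition coincides with Lemma~\ref{lem-wave}(iii). The size condition follows from Lemma~\ref{lem-wave}(i), since the exponential factor there dominates $[\ell(Q)/(\ell(Q)+d(x_Q,x))]^{\omega+\gz}$ for any prescribed $\gz$, and the doubling estimate $V_{\ell(Q)}(x_Q)+V(x_Q,x)\ls V_{\ell(Q)}(x_Q)[1+d(x_Q,x)/\ell(Q)]^{\omega}$ combined with $\mu(Q)\sim V_{\ell(Q)}(x_Q)$ converts the wavelet bound into the required inequality $|\psi_Q(x)|\ls [\mu(Q)]^{1/2}P_\gz(x_Q,x;\ell(Q))$. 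The H\"older regularity condition is obtained from Lemma~\ref{lem-wave}(ii) when $d(x,x')\le\ell(Q)$ (using $\bz<\eta$), and it is extended to the larger regime $d(x,x')\le(2A_0)^{-1}[\ell(Q)+d(x_Q,x)]$ by the usual triangle-type argument that applies the size bound to $b_Q(x)$ and $b_Q(x')$ separately. Setting $\lambda_Q:=c_0^{-1}\langle f,\psi_Q\rangle$, Theorem~\ref{thm-wavebtl}(i) then gives the norm equivalence $\|\lambda\|_{\dot b^s_{p,q}}\sim\|f\|_{\dot B^s_{p,q}(\mathrm{w},X)}\sim\|f\|_{\dot B^s_{p,q}(X)}$.

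The remaining—and main—technical step is to identify $f$ with its wavelet series $\sum_Q\lambda_Q b_Q$ in $(\GOO{\bz,\gz})'$, that is, to promote the $L^2$-orthonormal expansion to a distributional one. The key point is that $\{D_k\}_{k\in\zz}$ defined in \eqref{3.1x} is an exp-ATI satisfying $D_kf=\sum_{\az\in\CG_k}\langle f,\psi^k_\az\rangle\psi^k_\az$, so that Lemma~\ref{lem-hdrf} applied to this particular exp-ATI produces $f=\sum_{k,\az}\langle f,\psi^k_\az\rangle\psi^k_\az$ in $(\GOO{\bz,\gz})'$, which is exactly $\sum_Q\lambda_Q b_Q$; this is the same distributional mechanism already exploited in the proof of Theorem~\ref{thm-wavebtl}(ii). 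The Triebel--Lizorkin case is entirely analogous, with Theorem~\ref{thm-wavebtl}(ii) and Proposition~\ref{prop-fF} replacing their Besov counterparts.
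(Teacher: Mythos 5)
Your overall architecture is the one the paper intends: sufficiency from Propositions \ref{prop-bB} and \ref{prop-fF}, and necessity by observing that, after a uniform renormalization, the wavelets $\psi_Q$ are $(\bz,\gz)$-molecules centered at the cubes $Q\in\wD$ and then invoking Theorem \ref{thm-wavebtl} for the coefficient estimate; those parts of your argument (including the verification that $c_0\psi_Q$ satisfies Definition \ref{def-mol}) are sound. The genuine gap is precisely in the step you single out as the main one: the claim that Lemma \ref{lem-hdrf}, applied to the exp-ATI $\{D_k\}_{k\in\zz}$ of \eqref{3.1x}, ``produces $f=\sum_{k,\az}\langle f,\psi_\az^k\rangle\psi_\az^k$ in $(\GOO{\bz,\gz})'$''. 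It does not. The conclusion of Lemma \ref{lem-hdrf} is
\begin{equation*}
f(\cdot)=\sum_{k\in\zz}\sum_{\az\in\CA_k}\sum_{m=1}^{N(k,\az)}\mu\lf(Q_\az^{k,m}\r)\wz Q_k\lf(\cdot,y_\az^{k,m}\r)D_kf\lf(y_\az^{k,m}\r),
\end{equation*}
where the operators $\wz Q_k$ are auxiliary kernels satisfying only the size, regularity, and cancellation estimates (i)--(iii) of that lemma; they are not the wavelet kernels, and the expansion runs over the small cubes $Q_\az^{k,m}$ with arbitrary sample points, not over wavelet coefficients. Nor is this ``the same mechanism as in the proof of Theorem \ref{thm-wavebtl}(ii)'': there only the identity $D_kf(x)=\sum_{\az\in\CG_k}\langle f,\psi_\az^k\rangle\psi_\az^k(x)$ for each fixed $k$ is needed, never the series representation of $f$ itself in $(\GOO{\bz,\gz})'$. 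So, as written, the identification $f=\sum_{Q\in\wD}\lz_Qb_Q$ in the necessity direction is unproved.

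The fact you need is true, but it is the homogeneous wavelet reproducing formula in the distribution space, due to Han, Li and Ward \cite{hlw18} (this is exactly what the paper cites \cite{hlw18} for, and what it proves in the inhomogeneous case as \eqref{eq-iwave} of Theorem \ref{thm-eieti}); you must either quote it or prove it. A self-contained repair inside your framework runs as follows: set $b_Q:=c_0\psi_Q$, $\lz_Q:=c_0^{-1}\langle f,\psi_Q\rangle$, note by Theorem \ref{thm-wavebtl} that $\lz\in\dot b^s_{p,q}$ (resp.\ $\dot f^s_{p,q}$), so Proposition \ref{prop-bB} (resp.\ Proposition \ref{prop-fF}) gives a $g:=\sum_{Q\in\wD}\lz_Qb_Q$ converging in $(\GOO{\bz,\gz})'$; since each $\psi_P$ lies in $\GOO{\bz,\gz}$, weak-$*$ convergence and the $L^2$-orthonormality of the wavelets yield $\langle g,\psi_P\rangle=\langle f,\psi_P\rangle$ for every $P\in\wD$, hence $D_k(f-g)\equiv 0$ for all $k\in\zz$, and then Lemma \ref{lem-hdrf}, correctly applied to $f-g$ with the exp-ATI $\{D_k\}_{k\in\zz}$, forces $f=g$ in $(\GOO{\bz,\gz})'$. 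With this step supplied, the rest of your proof goes through and coincides with the paper's intended argument.
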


\begin{remark}\label{rem-mol}
\begin{enumerate}
\item In \cite{gkkp19}, Georgiadis et al.  introduced two kinds
of molecules,  respectively, called synthesis and analysis
molecules, which have different properties
and were motivated by Frazier and Jawerth
\cite{fj90}. Using these molecules,
Georgiadis et al. \cite{gkkp19} characterized Triebel--Lizorkin spaces,
associated with operators, on spaces of homogeneous type
(see \cite[Theorems 7.4 and 7.5]{gkkp19}). Differently from those molecules in
\cite{gkkp19} (see also \cite{fj90}), we introduce a unified
kind of molecules (see Definition \ref{def-mol})
to characterize Besov and Triebel--Lizorkin spaces here.

\item Notice that, by Definition \ref{def-mol}, it is easy to see that
a molecule has the form of test functions
on $X$, which only has the polynomial decay, and hence has no
exponential decay. To get rid of dependence of the
reverse doubling property of $\mu$, differently from those
molecules in \cite[Definition (6.21)]{hs94} which
center at all cubes, the molecules in Definition \ref{def-mol} are centered at
some subtly chosen cubes, more precisely, at the ``supports''
of wavelet functions introduced in \cite{ah13}.
As a result, we can use the wavelet characterization of
Besov and Triebel--Lizorkin spaces (see Theorem \ref{thm-wavebtl}),
and therefore the geometrical properties of $X$ to get rid of the dependence of the
reverse doubling property of $\mu$.
\end{enumerate}
\end{remark}

\section{Littlewood--Paley characterizations of Triebel--Lizorkin spaces}\label{s-lp}

In this section, we establish the Littlewood--Paley characterizations of homogeneous Triebel--Lizorkin spaces by first
recalling the notions of Littlewood--Paley functions.
Let $\eta$ be the same as in Definition \ref{def-eti}, $s\in\rr$, and $q\in(0,\fz]$. Recall that, for any $f\in(\GOO{\bz,\gz})'$
with $\bz,\ \gz\in(0,\eta)$, the \emph{Littlewood--Paley $g$-function $\dot g^s_q(f)$ of $f$} is defined by setting,
for any $x\in X$,
$$
\dot g^s_q(f)(x):=\lf[\sum_{k\in\zz}\dz^{-ksq}|Q_kf(x)|^q\r]^{1/q},
$$
the \emph{Lusin area function $\dot\CS^s_q(f)$ of $f$} by setting, for any $x\in X$,
$$
\dot\CS^s_q(f)(x):=\lf[\sum_{k\in\zz}\dz^{-ksq}\int_{B(x,\dz^k)}|Q_kf(y)|^q
\,\frac{d\mu(y)}{V_{\dz^k}(x)}\r]^{1/q},
$$
and, for any given $\lz\in(0,\fz)$, the \emph{Littlewood--Paley $g_\lz^*$-function $(\dot g_\lz^*)^s_q(f)$ of
$f$} by setting, for any $x\in X$,
\begin{equation}\label{eq-glstar}
\lf(\dot g_\lz^*\r)^s_q(f)(x):=\lf\{\sum_{k\in\zz}\dz^{-ksq}\int_X |Q_kf(y)|^q
\lf[\frac{\dz^k}{\dz^k+d(x,y)}\r]^\lz\,\frac{d\mu(y)}{V_{\dz^k}(x)+V_{\dz^k}(y)}\r\}^{1/q}.
\end{equation}
Obviously, by the definition of Triebel--Lizorkin spaces, we find that, for any
$f\in\dot F^s_{p,q}(X)$,
\begin{equation}\label{eq-gfun}
\lf\|\dot g^s_q(f)\r\|_{L^p(X)}=\|f\|_{\dot F^s_{p,q}(X)}.
\end{equation}

We have the following Lusin area function characterization of Triebel--Lizorkin spaces.

\begin{theorem}\label{thm-g=s}
Let $s$, $p$, $q$, $\bz$, and $\gz$ be the same as in Definition \ref{def-btl}(ii). Then $f\in \dot F^s_{p,q}(X)$ if
and only if $f\in(\GOO{\bz,\gz})'$ and $\dot\CS^s_q(f)\in L^p(X)$. Moreover, there exists a constant
$C\in[1,\fz)$, independent of $f$, such that
$C^{-1}\|\dot\CS^s_q(f)\|_{L^p(X)}\le\|f\|_{\dot F^s_{p,q}(X)}\le C\|\dot\CS^s_q(f)\|_{L^p(X)}$.
\end{theorem}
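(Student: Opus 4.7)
The plan is to establish the two-sided bound $\|\dot\CS^s_q(f)\|_{L^p(X)}\sim\|f\|_{\dot F^s_{p,q}(X)}$ using the wavelet characterization (Theorem \ref{thm-wavebtl}(ii)), the molecular characterization (Theorem \ref{thm-mol}), the almost-diagonal bound (Theorem \ref{thm-ado}), and the vector-valued Fefferman--Stein inequality (Lemma \ref{lem-fs}).

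For the upper bound $\|\dot\CS^s_q(f)\|_{L^p(X)}\ls\|f\|_{\dot F^s_{p,q}(X)}$, I would choose the exp-ATI in the definition of $\dot\CS^s_q(f)$ to be $\{D_k\}_{k\in\zz}$ generated from the wavelet system as in \eqref{3.1x}. Orthonormality then gives $Q_kf(y)=\sum_{\az\in\CG_k}\langle f,\psi_\az^k\rangle\psi_\az^k(y)$, and for $y\in B(x,\dz^k)$ the quasi-triangle inequality yields $d(x,y_\az^k)+\dz^k\sim d(y,y_\az^k)+\dz^k$, so the exponential decay of $\psi_\az^k$ about $y_\az^k$ transfers (with a smaller constant) to decay about $x$ uniformly in $y$. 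Dominating this exponential by $P_\Gamma(x,y_\az^k;\dz^k)$ for $\Gamma$ large and applying Lemma \ref{lem-btl} with $r\in(0,\min\{p,q\})$ sufficiently small produces a pointwise bound on $|Q_kf(y)|$ by a maximal function of the wavelet coefficients that is independent of $y\in B(x,\dz^k)$. Averaging in $y$, summing in $k$ against $\dz^{-ksq}$, and applying Lemma \ref{lem-fs} together with Theorem \ref{thm-wavebtl}(ii) gives this direction.

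For the lower bound $\|f\|_{\dot F^s_{p,q}(X)}\ls\|\dot\CS^s_q(f)\|_{L^p(X)}$, by Theorem \ref{thm-wavebtl}(ii) it suffices to bound $\|f\|_{\dot F^s_{p,q}(\mathrm{w},X)}$. When $Q_k=D_k$, orthonormality yields $\langle f,\psi_\az^k\rangle=\int_X Q_kf(y)\psi_\az^k(y)\,d\mu(y)$. Using the exponential decay of $\psi_\az^k$, the estimate $\int_X|\psi_\az^k|\,d\mu\ls[\mu(Q_\az^{k+1})]^{1/2}$, and H\"older's inequality (in the case $q\ge 1$) applied on dyadic annuli $\{y:d(y,y_\az^k)\sim\dz^{k-j}\}$ about $y_\az^k$, one obtains, for any $x\in Q_\az^{k+1}$ and any large $N>0$,
\[
\frac{|\langle f,\psi_\az^k\rangle|^q}{[\mu(Q_\az^{k+1})]^{q/2}}\ls\sum_{j\ge 0}\dz^{jN}\,\frac{1}{V_{\dz^{k-j}}(x)}\int_{B(x,\dz^{k-j})}|Q_kf(y)|^q\,d\mu(y).
\]
Summing the geometric series in $j$ (absorbing the larger-aperture terms into the standard Lusin area function via a change-of-aperture argument) and applying Lemma \ref{lem-fs} concludes this case.

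The main obstacle is the range $q<1$, where the preceding H\"older step fails. To handle this, I would pass through the molecular characterization: apply the reproducing formula (Lemma \ref{lem-hdrf}) to write $f=\sum_{k,\az,m}\mu(Q_\az^{k,m})\wz Q_k(\cdot,y_\az^{k,m})Q_kf(y_\az^{k,m})$, choosing $y_\az^{k,m}\in Q_\az^{k,m}$ so that $|Q_kf(y_\az^{k,m})|^q\le 2\inf_{y\in Q_\az^{k,m}}|Q_kf(y)|^q\le 2[\mu(Q_\az^{k,m})]^{-1}\int_{Q_\az^{k,m}}|Q_kf|^q\,d\mu$. Each $[\mu(Q_\az^{k,m})]^{1/2}\wz Q_k(\cdot,y_\az^{k,m})$ satisfies the conditions of a $(\bz,\gz)$-molecule, and for $j_0$ large enough one has $Q_\az^{k,m}\subset B(x,\dz^k)$ whenever $x\in Q_\az^{k,m}$, so $|Q_kf(y_\az^{k,m})|^q\ls[V_{\dz^k}(x)]^{-1}\int_{B(x,\dz^k)}|Q_kf|^q\,d\mu$, providing the pointwise bound by $[\dot\CS^s_q(f)(x)]^q$. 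The technical delicacy is that the cubes $Q_\az^{k,m}$ at level $k+j_0$ do not lie in $\wD$, so the sum must be reorganized by parent cube before Theorem \ref{thm-mol} applies; handling the resulting reindexing relies on Theorem \ref{thm-ado}.
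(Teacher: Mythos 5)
Your two main reductions both hinge on taking the exp-ATI in the definition of $\dot\CS^s_q$ to be the wavelet operators $D_k$ from \eqref{3.1x}, so that $Q_kf=\sum_{\az\in\CG_k}\langle f,\psi_\az^k\rangle\psi_\az^k$ and $\langle f,\psi_\az^k\rangle=\int_XQ_kf\,\psi_\az^k\,d\mu$. Unlike $\|f\|_{\dot F^s_{p,q}(X)}$, the quantity $\|\dot\CS^s_q(f)\|_{L^p(X)}$ is not known beforehand to be independent of the choice of exp-ATI, so this reduction proves the theorem only for that particular ATI; for a general $\{Q_k\}$ one needs cross-scale almost-orthogonality estimates, which is exactly what the paper supplies by expanding $f$ into wavelet molecules and using $|Q_k(b_{Q_\az^{j+1}})(y)|\ls\dz^{|k-j|\bz'}[\mu(Q_\az^{j+1})]^{-1/2}P_\gz(x,y_\az^j;\dz^{j\wedge k})$ for $y\in B(x,\dz^k)$. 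In addition, the change-of-aperture step you invoke for $q\ge1$ is justified in the paper (Proposition \ref{prop-angle}) by a distribution-function argument that works when $p<q$; for $p\ge q$ a separate (duality-type) argument is required and is not in your sketch. These points are repairable, but they are gaps as written.

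The serious gap is your treatment of $q<1$ in the direction $\|f\|_{\dot F^s_{p,q}(X)}\ls\|\dot\CS^s_q(f)\|_{L^p(X)}$. The pieces $[\mu(Q_\az^{k,m})]^{1/2}\wz Q_k(\cdot,y_\az^{k,m})$ produced by Lemma \ref{lem-hdrf} are molecule-like functions associated with the level-$(k+j_0)$ cubes $Q_\az^{k,m}$, which in general do \emph{not} belong to $\wD$, whereas Definition \ref{def-mol}, the sequence spaces $\dot f^s_{p,q}$, Theorem \ref{thm-ado}, and Theorem \ref{thm-mol} are all formulated exclusively for cubes in $\wD$, i.e., for the ``supports'' of wavelets. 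Reorganizing ``by parent cube'' does not help: the parent $Q_\az^k$ lies in $\wD$ only when its center is a new dyadic point, and, in the absence of reverse doubling, there need not exist \emph{any} cube of $\wD$ of comparable side length near a given $Q_\az^{k,m}$, because the set $\CY^k$ can be at distance much larger than $\dz^k$ from a given point (this is precisely why the factor $\exp\{-\nu[d(\cdot,\CY^k)/\dz^k]^a\}$ appears in Definition \ref{def-eti} and why the molecules are restricted to $\wD$; see Remark \ref{rem-mol}(ii)). Hence the reindexing you defer to Theorem \ref{thm-ado} is not available, and your route collapses exactly in the regime it was meant to rescue. The paper avoids this entirely: in this direction it compares $\dot g^s_q(f)$ with $\dot\CS^s_q(f)$ through the supremum and infimum over the small cubes $Q_\az^{k,m}$ (using $Q_\az^{k,m}\subset B(x,\dz^k)$ and $\mu(Q_\az^{k,m})\sim V_{\dz^k}(x)$ for $x\in Q_\az^{k,m}$) and then applies the Plancherel--P\^olya inequality of Lemma \ref{lem-ppi}, which covers all admissible $p,q$ at once (including $q<1$) and arbitrary exp-ATIs; the molecular characterization is used only in the opposite direction, where the molecules are the wavelets themselves and are therefore automatically centered at cubes of $\wD$.
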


To prove this theorem, we need the following Plancherel--P\^olya inequality from \cite{whhy20}.

\begin{lemma}[{\cite[(3.8)]{whhy20}}]\label{lem-ppi}
Let $\{Q_k\}_{k=-\infty}^\infty$ and $\{P_k\}_{k=-\infty}^\infty$ be two {\rm $\exp$-ATIs}, and
$\beta,\ \gamma \in (0, \eta)$ with $\eta$ as in Definition \ref{def-eti}. Then, when
$s\in(-(\beta\wedge\gamma),\beta\wedge\gamma)$, $p\in (p(s,\beta\wedge\gamma),\infty)$, and
$q \in (p(s,\beta\wedge\gamma),\infty]$, there exists a positive constant $C$
such that, for any $f\in(\GOO{\bz,\gz})'$,
\begin{align*}
&\left\|\left[\sum_{k=-\infty}^\infty\sum_{\alpha \in \CA_k}\sum_{m=1}^{N(k,\alpha)}\delta^{-ksq}
\sup_{z\in Q_\az^{k,m}}|P_kf(z)|^q\mathbf 1_{Q_\az^{k,m}}\right]^{1/q}\right\|_{L^p(X)}\\
&\quad\le C\left\|\left[\sum_{k=-\infty}^\infty\sum_{\alpha \in \CA_k}\sum_{m=1}^{N(k,\alpha)}\delta^{-ksq}
\inf_{z\in Q_\az^{k,m}}|Q_kf(z)|^q\mathbf 1_{Q_\az^{k,m}}\right]^{1/q}\right\|_{L^p(X)}
\end{align*}
with the usual modification made when $q=\infty$.
\end{lemma}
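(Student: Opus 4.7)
The plan is to bound $\sup_{z\in Q_\az^{k,m}}|P_k f(z)|$ pointwise by a quantity controlled by the infima $\inf_{z\in Q_{\az'}^{j,m'}}|Q_j f(z)|$ over nearby dyadic cubes, and then pass to the $L^p$-norm via a vector-valued maximal inequality. First I would apply the discrete Calder\'on reproducing formula of Lemma \ref{lem-hdrf} to $f$, taking the exp-ATI in that lemma to be $\{Q_j\}_j$. Since the formula is valid for \emph{any} selection of sample points $y_{\az'}^{j,m'}\in Q_{\az'}^{j,m'}$, I would choose each one so that
\[
\lf|Q_j f\lf(y_{\az'}^{j,m'}\r)\r|\le 2\inf_{z\in Q_{\az'}^{j,m'}}|Q_j f(z)|.
\]
Applying $P_k$ to both sides then yields
\[
P_k f(z)=\sum_{j\in\zz}\sum_{\az'\in\CA_j}\sum_{m'=1}^{N(j,\az')}\mu\lf(Q_{\az'}^{j,m'}\r)(P_k\wz Q_j)\lf(z,y_{\az'}^{j,m'}\r)Q_j f\lf(y_{\az'}^{j,m'}\r).
\]

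The second step, and the main obstacle, is the almost-orthogonality estimate
\[
\lf|(P_k\wz Q_j)(z,y)\r|\ls \dz^{|k-j|\bz'}P_{\gz'}\lf(z,y;\dz^{k\wedge j}\r)
\]
valid for any $\bz'\in(0,\bz\wedge\gz)$ and some $\gz'\in(0,\gz)$. When $k\ge j$, this would be produced by pairing the cancellation of $P_k$ (property (v) of Definition \ref{def-eti}) against the H\"older regularity of $\wz Q_j$ from Lemma \ref{lem-hdrf}(ii); when $k<j$, the roles of the two kernels are swapped, using the cancellation of $\wz Q_j$ (Lemma \ref{lem-hdrf}(iii)) against the H\"older-exponential regularity of $P_k$. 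The technical delicacy is that $\wz Q_j$ only has \emph{polynomial} decay, so the $y'$-integration must be split according to whether $d(y',z)\le(2A_0)^{-1}[\dz^{k\wedge j}+d(z,y)]$ in such a way that the full $\dz^{|k-j|\bz'}$ saving is extracted without degrading the $P_{\gz'}$ factor needed in the downstream summation.

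With the almost-orthogonality bound in hand, I would substitute it into the reproducing formula and take the supremum over $z\in Q_\az^{k,m}$; since $\mathrm{diam}(Q_\az^{k,m})\sim\dz^k$, this only alters $P_{\gz'}$ by a harmless multiplicative constant. Then, fixing an $r\in(p(s,\bz\wedge\gz),\min\{p,q,1\})$ (possible since $\min\{p,q\}>p(s,\bz\wedge\gz)$), I would invoke Lemma \ref{lem-btl} to replace the sum over $(\az',m')$ by a Hardy--Littlewood maximal function, obtaining, for every $x\in Q_\az^{k,m}$,
\[
\sup_{z\in Q_\az^{k,m}}|P_k f(z)|\ls\sum_{j\in\zz}\dz^{|k-j|\bz'}\dz^{[(k\wedge j)-j]\omega(1/r-1)}\lf[\CM(G_j)(x)\r]^{1/r},
\]
where $G_j:=\sum_{\az',m'}|Q_j f(y_{\az'}^{j,m'})|^r\mathbf 1_{Q_{\az'}^{j,m'}}$.

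Finally I would multiply by $\dz^{-ks}$, raise to the $q$-th power, and sum in $k$. Applying H\"older's inequality when $q>1$, or Lemma \ref{lem-pin} when $q\le 1$, the $j$-sum reduces to convergent geometric series provided $\bz'+s>\omega(1/r-1)$ and $\bz'>s$, both of which are arranged by choosing $\bz'$ close to $\bz\wedge\gz$ and $r$ close to $p(s,\bz\wedge\gz)$. Interchanging the order of summation, taking the $L^p$-norm, and applying the Fefferman--Stein vector-valued maximal inequality (Lemma \ref{lem-fs}) with exponents $p/r>1$ and $q/r>1$, I would arrive at
\[
\text{LHS}\ls\lf\|\lf[\sum_{j\in\zz}\dz^{-jsq}\sum_{\az',m'}\lf|Q_j f\lf(y_{\az'}^{j,m'}\r)\r|^q\mathbf 1_{Q_{\az'}^{j,m'}}\r]^{1/q}\r\|_{L^p(X)}.
\]
Substituting $|Q_j f(y_{\az'}^{j,m'})|\le 2\inf_{z\in Q_{\az'}^{j,m'}}|Q_j f(z)|$ then produces the right-hand side of the claimed Plancherel--P\^olya inequality, with the usual $\ell^\infty$-modification in the case $q=\fz$.
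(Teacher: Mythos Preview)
The paper does not supply its own proof of Lemma~\ref{lem-ppi}; it is quoted verbatim from \cite[(3.8)]{whhy20}. Your outline is exactly the standard Plancherel--P\^olya argument one expects to find there: reproduce $f$ via Lemma~\ref{lem-hdrf} with the free sample points $y_{\az'}^{j,m'}$ chosen at (approximate) minimizers of $|Q_jf|$, apply $P_k$, use an almost-orthogonality bound $|(P_k\wz Q_j)(z,y)|\lesssim\dz^{|k-j|\bz'}P_{\gz'}(z,y;\dz^{k\wedge j})$, convert the spatial sum into a maximal function, and close with Fefferman--Stein.

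Two small points of care. First, Lemma~\ref{lem-btl} as stated here is indexed by $\az\in\CG_k$ and cubes $Q_\az^{k+1}$, whereas you need the variant over $\az\in\CA_k$, $m\in\{1,\dots,N(k,\az)\}$ and cubes $Q_\az^{k,m}$; this is the version the paper cites elsewhere as \cite[Lemma~3.7]{whhy20}, so be sure to invoke that rather than Lemma~\ref{lem-btl} literally. Second, your choice of $r$ must simultaneously satisfy $r>\omega/(\omega+\gz')$ (for the maximal-function lemma), $r<\min\{p,q\}$ (for Lemma~\ref{lem-fs}), and $\omega(1/r-1)<\bz'+s$ (for the geometric series); since $\gz'<\gz$ and $\bz'<\bz\wedge\gz$, this is marginally tighter than $r>p(s,\bz\wedge\gz)$, and you should make explicit that $\gz'$ and $\bz'$ are first chosen close enough to $\gz$ and $\bz\wedge\gz$ (and $\omega$ close enough to $\omega_0$) that such an $r$ exists. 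With these cosmetic adjustments the argument is complete.
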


\begin{proof}[Proof of Theorem \ref{thm-g=s}]
We first show the sufficiency of this theorem. To this end, suppose that $f\in(\GOO{\bz,\gz})'$ and
$\dot\CS^s_q(f)\in L^p(X)$ with $\bz$, $\gz$, $s$, $p$, and $q$ as in this theorem. On one hand, we have
\begin{align*}
\lf[\dot g^s_q(f)\r]^q&=\sum_{k=-\fz}^\fz\dz^{-ksq}|Q_kf|^q
=\sum_{k=-\fz}^\fz\dz^{-ksq}\sum_{\az\in\CA_k}\sum_{m=1}^{N(k,\az)}|Q_kf|^q\mathbf 1_{Q_\az^{k,m}}\\
&\le\sum_{k=-\fz}^\fz\dz^{-ksq}\sum_{\az\in\CA_k}\sum_{m=1}^{N(k,\az)}\sup_{z\in Q_\az^{k,m}}
|Q_kf(z)|^q\mathbf 1_{Q_\az^{k,m}}.
\end{align*}
On the other hand, for any $x\in Q_\az^{k,m}$ for some $k\in\zz$, $\az\in\CA_k$, and
$m\in\{1,\ldots,N(k,\az)\}$, we conclude that $\mu(Q_\az^{k,m})\sim \mu(Q_\az^k)\sim V_{\dz^k}(x)$. Moreover, since
$\dz$ is small, it then follows that $Q_\az^{k,m}\subset B(x,\dz^k)$.
Thus, we obtain, for any $x\in X$,
\begin{align*}
&\sum_{k=-\fz}^\fz\dz^{-ksq}\sum_{\az\in\CA_k}\sum_{m=1}^{N(k,\az)}\inf_{z\in Q_\az^{k,m}}
|Q_kf(z)|^q\mathbf 1_{Q_\az^{k,m}}(x)\\
&\quad\le\sum_{k=-\fz}^\fz\dz^{-ksq}\sum_{\az\in\CA_k}\sum_{m=1}^{N(k,\az)}\frac{1}{\mu(Q_\az^{k,m})}
\int_{Q_\az^{k,m}}|Q_kf(z)|^q\,d\mu(z)\mathbf 1_{Q_\az^{k,m}}(x)\\
&\quad\ls\sum_{k=-\fz}^\fz\dz^{-ksq}\frac 1{V_{\dz^k}(x)}\int_{B(x,\dz^k)} |Q_kf(z)|^q\,d\mu(z)
\sim \lf[\dot\CS^s_q(f)(x)\r]^q.
\end{align*}
Therefore, by the above two inequalities and Lemma \ref{lem-ppi}, we further conclude that
\begin{align*}
\|f\|_{\dot F^s_{p,q}(X)}&=\lf\|\dot g^s_q(f)\r\|_{L^p(X)}
\ls\lf\|\lf[\sum_{k=-\fz}^\fz\dz^{-ksq}\sum_{\az\in\CA_k}\sum_{m=1}^{N(k,\az)}\sup_{z\in Q_\az^{k,m}}
|Q_kf(z)|^q\mathbf 1_{Q_\az^{k,m}}\r]^{1/q}\r\|_{L^p(X)}\\
&\ls\lf\|\lf[\sum_{k=-\fz}^\fz\dz^{-ksq}\sum_{\az\in\CA_k}\sum_{m=1}^{N(k,\az)}\inf_{z\in Q_\az^{k,m}}
|Q_kf(z)|^q\mathbf 1_{Q_\az^{k,m}}\r]^{1/q}\r\|_{L^p(X)}\sim\lf\|\dot\CS^s_q(f)\r\|_{L^p(X)}.
\end{align*}
This finishes the proof of the sufficiency of this theorem.

Now, we consider the necessity of this theorem. To show this, suppose $f\in\dot F^s_{p,q}(X)$
with $s$, $p$, and $q$ as in this theorem. By Theorem \ref{thm-mol}, we find that there exist
$(\bz,\gz)$-molecules $\{b_Q\}_{Q\in\wD}$ centered, respectively, at $\{Q\}_{Q\in\wD}$ with $\bz$ and $\gz$ as in
this theorem, and $\lz:=\{\lz_Q\}_{Q\in\wD}\in\dot f^s_{p,q}$ such that
$$
f=\sum_{j\in\zz}\sum_{\az\in\CG_j}\lz_{Q_\az^{j+1}}b_{Q_\az^{j+1}} \quad
\textup{in }\lf(\GOO{\bz,\gz}\r)',
$$
and
\begin{equation}\label{eq-6.xx}
\|\lz\|_{\dot f^s_{p,q}}\ls\|f\|_{\dot F^s_{p,q}(X)}.
\end{equation}
Therefore, we have, for any $k\in\zz$, $x\in X$, and $y\in B(x,\dz^k)$,
$$
Q_kf(y)=\sum_{j\in\zz}\sum_{\az\in\CG_j}\lz_{Q_{\az}^{j+1}}Q_k\lf(b_{Q_\az^{j+1}}\r)(y).
$$
Similarly to the proof of \cite[Lemma 3.9]{whhy20}, we obtain, for any fixed $\bz'\in(0,\bz\wedge\gz)$, and any $j,\ k\in\zz$,
$\az\in\CG_j$, $x\in X$, and $y\in B(x,\dz^k)$,
\begin{equation*}
\lf|Q_k\lf(b_{Q_\az^{j+1}}\r)(y)\r|\ls\dz^{|k-j|\bz'}\lf[\mu(Q_\az^{j+1})\r]^{-1/2}P_\gz\lf(x,y_\az^j;\dz^{j\wedge k}\r),
\end{equation*}
which further implies that, for any $k\in\zz$ and $x\in X$,
\begin{equation}\label{eq-add2}
\frac 1{V_{\dz^k}(x)}\int_{B(x,\dz^k)}|Q_kf(y)|^q\,d\mu(y)
\ls\lf\{\sum_{j\in\zz}\dz^{|k-j|\bz'}\sum_{\az\in\CG_j}
\lf[\mu(Q_\az^{j+1})\r]^{-1/2}\lf|\lz_{Q_{\az}^{j+1}}\r|P_\gz\lf(x,y_\az^j;\dz^{j\wedge k}\r)\r\}^q.
\end{equation}

To proceed, we consider the following two cases on $\min\{p,q\}$.
When $\min\{p,q\}>1$, then, by \eqref{eq-add2}, the H\"older inequality, and \eqref{eq-btl}, we find that,
for any $k\in\zz$ and $x\in X$,
\begin{align}\label{eq-molq>1}
\frac 1{V_{\dz^k}(x)}\int_{B(x,\dz^k)}|Q_kf(y)|^q\,d\mu(y)&\ls\lf\{\sum_{j\in\zz}\dz^{|k-j|\bz'}\sum_{\az\in\CG_j}
\lf[\mu(Q_\az^{j+1})\r]^{-1/2}\lf|\lz_{Q_{\az}^{j+1}}\r|P_\gz\lf(x,y_\az^j;\dz^{j\wedge k}\r)\r\}^q\noz\\
&\ls\sum_{j\in\zz}\dz^{|k-j|(\bz'-\ez)q}\CM\lf(\sum_{\az\in\CG_j}\lf|\lz_{Q_{\az}^{j+1}}
\wz{\mathbf 1}_{Q_\az^{j+1}}\r|^q\r)(x),
\end{align}
where $\ez\in(0,\fz)$ is a fixed positive number. Since $|s|<\min\{\bz,\gz\}$, we can choose $\bz'\in(0,\bz\wedge\gz)$
and $\ez\in(0,\bz')$ such that $\bz'-\ez>|s|$. Using this and \eqref{eq-molq>1}, we further conclude that
\begin{equation*}
\lf[\dot\CS^s_q(f)\r]^q\ls\sum_{j\in\zz}\dz^{-jsq}\CM\lf(\sum_{\az\in\CG_j}\lf|\lz_{Q_{\az}^{j+1}}
\wz{\mathbf 1}_{Q_\az^{j+1}}\r|^q\r).
\end{equation*}
Thus, from this, Lemma \ref{lem-fs}, and \eqref{eq-6.xx}, we deduce that
\begin{align*}
\lf\|\dot\CS^s_q(f)\r\|_{L^p(X)}&\ls\lf\|\lf[\sum_{j\in\zz}\dz^{-jsq}
\CM\lf(\sum_{\az\in\CG_j}\lf|\lz_{Q_{\az}^{j+1}}\wz{\mathbf 1}_{Q_\az^{j+1}}\r|^q\r)\r]^{1/q}\r\|_{L^p(X)}\\
&\ls\lf\|\lf(\sum_{j\in\zz}\dz^{-jsq}\sum_{\az\in\CG_j}\lf|\lz_{Q_{\az}^{j+1}}
\wz{\mathbf 1}_{Q_\az^{j+1}}\r|^q\r)^{1/q}\r\|_{L^p(X)}\sim\|\lz\|_{\dot f^s_{p,q}}
\ls\|f\|_{\dot F^s_{p,q}(X)}.
\end{align*}
This finishes the proof of the necessity of this theorem when $\min\{p,q\}>1$.

When $\min\{p,q\}\le 1$, let $\omega\in[\omega_0,\fz)$ satisfy \eqref{eq-doub} and all the assumptions of this theorem
with $\omega_0$ replaced by $\omega$. Using this, \eqref{eq-add2}, and \eqref{eq-btl}, and choosing
$r\in(\omega/[\omega+\gz],\min\{p,q\})$, we obtain, for any $k\in\zz$ and $x\in X$,
\begin{align}\label{eq-lusin3}
&\frac 1{V_{\dz^k}(x)}\int_{B(x,\dz^k)}|Q_kf(y)|^q\,d\mu(y)\noz\\
&\quad\ls\lf\{\sum_{j\in\zz}\dz^{|k-j|\bz'}\dz^{[j-(j\wedge k)]\omega(1-1/r)}
\lf[\CM\lf(\sum_{\az\in\CG_j}\lf|\lz_{Q_{\az}^{j+1}}\wz{\mathbf 1}_{Q_\az^{j+1}}\r|^r\r)(x)\r]^{1/r}\r\}^q.
\end{align}
Since $p(s,\bz\wedge\gz)<\min\{p,q\}$, we may choose $\bz'\in(0,\bz\wedge\gz)$ and $r\in(\omega/[\omega+\eta],\min\{p,q\})$
such that $\bz'>s$ and $\bz'+s>\omega(1/r-1)$. Thus, when $k\ge j$,
$$
(j-k)s+|k-j|\bz'+[j-(j\wedge k)]\omega(1-1/r)=(k-j)(\bz'-s)>0
$$
and, when $k<j$,
$$
(j-k)s+|k-j|\bz'+[j-(j\wedge k)]\omega(1-1/r)=(j-k)[s+\bz'-\omega(1/r-1)]>0.
$$
On one hand, for any $k\in\zz$,
$$
\sum_{j\in\zz}\dz^{(j-k)s}\dz^{|k-j|\bz'}\dz^{[j-(j\wedge k)]\omega(1-1/r)}\ls 1
$$
with the implicit positive constant independent of $k$; on the other hand, for any $j\in\zz$,
$$
\sum_{k\in\zz}\lf\{\dz^{(j-k)s}\dz^{|k-j|\bz'}\dz^{[j-(j\wedge k)]\omega(1-1/r)}\r\}^{q\wedge 1}\ls 1
$$
with the implicit positive constant independent of $j$. By the above two
inequalities, \eqref{eq-lusin3}, and the H\"older
inequality when $q\in(1,\fz]$, or Lemma \ref{lem-pin} when $q\in(p(s,\bz\wedge\gz),1]$,
we conclude that
\begin{align}\label{eq-sum}
\lf[\dot\CS^s_q(f)\r]^q
&\ls\sum_{k\in\zz}\lf\{\sum_{j\in\zz}\dz^{-js}\dz^{(j-k)s}\dz^{|k-j|\bz'}\dz^{[j-(j\wedge k)]\omega(1-1/r)}
\lf[\CM\lf(\sum_{\az\in\CG_j}\lf|\lz_{Q_{\az}^{j+1}}\wz{\mathbf 1}_{Q_\az^{j+1}}\r|^r\r)\r]^{1/r}\r\}^q\noz\\
&\ls\sum_{j\in\zz}\dz^{-jsq}
\lf[\CM\lf(\sum_{\az\in\CG_j}\lf|\lz_{Q_{\az}^{j+1}}\wz{\mathbf 1}_{Q_\az^{j+1}}\r|^r\r)\r]^{q/r}.
\end{align}
Therefore, from this and Lemma \ref{lem-fs}, we further deduce that
\begin{align*}
\lf\|\dot\CS^s_q(f)\r\|_{L^p(X)}&\ls\lf\|\lf\{\sum_{j\in\zz}\dz^{-jsq}
\lf[\CM\lf(\sum_{\az\in\CG_j}\lf|\lz_{Q_{\az}^{j+1}}\wz{\mathbf 1}_{Q_\az^{j+1}}\r|^r\r)\r]^{q/r}\r\}^{1/q}
\r\|_{L^p(X)}\\
&\ls\lf\|\lf[\sum_{j\in\zz}\dz^{-jsq}
\lf(\sum_{\az\in\CG_j}\lf|\lz_{Q_{\az}^{j+1}}\wz{\mathbf 1}_{Q_\az^{j+1}}\r|^r\r)^{q/r}\r]^{r/q}
\r\|_{L^{p/r}(X)}^{1/r}\sim\|\lz\|_{\dot f^s_{p,q}}\ls\|f\|_{\dot F^s_{p,q}(X)}.
\end{align*}
This finishes the proof of the necessity of this theorem when $\min\{p,q\}\le 1$,
and hence of Theorem \ref{thm-g=s}.
\end{proof}

Finally, we establish the Littlewood--Paley $g_\lz^*$-function characterization of Triebel--Lizorkin spaces.
We first consider the case $q\in(0,p]$ and, in this case, we have the following conclusion.

\begin{proposition}\label{prop-glp>q}
Let $\eta$ be as in Definition \ref{def-eti},
$s$, $p$, $q$, $\bz$, and $\gz$ as in Definition \ref{def-btl} and $q\in(0,p]$,
and $\lz\in(\omega_0,\fz)$ with $\omega_0$ as in \eqref{eq-updim}. Then $f\in\dot F^s_{p,q}(X)$
if and only if $f\in(\GOO{\bz,\gz})'$ with $\bz$ and $\gz$ satisfying \eqref{eq-bzgz}, and
$(\dot g_\lz^*)^s_q\in L^p(X)$. Moreover, there exists a constant $C\in[1,\fz)$, independent of $f$, such that
$C^{-1}\|f\|_{\dot F^s_{p,q}(X)}\le\|(\dot g_\lz^*)^s_q(f)\|_{L^p(X)}\le C\|f\|_{\dot F^s_{p,q}(X)}$.
\end{proposition}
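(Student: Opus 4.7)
The plan is to use Theorem~\ref{thm-g=s} as a bridge between the $g_\lz^*$-function and the Lusin area function, obtaining sufficiency from a trivial pointwise domination and necessity via an $L^{p/q}$-duality argument that crucially uses $p/q\ge 1$.

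Sufficiency is immediate: when $y\in B(x,\dz^k)$ the weight $[\dz^k/(\dz^k+d(x,y))]^\lz$ exceeds $2^{-\lz}$ and, by doubling plus the quasi-triangle inequality, $V_{\dz^k}(x)+V_{\dz^k}(y)\sim V_{\dz^k}(x)$; restricting the $y$-integral in \eqref{eq-glstar} to $B(x,\dz^k)$ therefore produces the pointwise bound $\dot\CS^s_q(f)(x)\ls(\dot g_\lz^*)^s_q(f)(x)$, and Theorem~\ref{thm-g=s} supplies the rest.

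For the necessity, since $q\le p$ we have $p/q\in[1,\fz)$, so duality in $L^{p/q}(X)$ is available. Pairing $[(\dot g_\lz^*)^s_q(f)]^q$ against a nonnegative $g$ with $\|g\|_{L^{(p/q)'}(X)}\le 1$ and using Fubini gives
\begin{equation*}
\int_X g(x)\lf[(\dot g_\lz^*)^s_q(f)(x)\r]^q d\mu(x)=\sum_{k\in\zz}\dz^{-ksq}\int_X |Q_kf(y)|^q T_k g(y)\,d\mu(y),
\end{equation*}
where $T_k g(y):=\int_X [\dz^k/(\dz^k+d(x,y))]^\lz [V_{\dz^k}(x)+V_{\dz^k}(y)]^{-1} g(x)\,d\mu(x)$. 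Fixing $\omega\in(\omega_0,\lz)$ satisfying \eqref{eq-doub}, I would split the $x$-integration into $B(y,\dz^k)$ and the dyadic annuli $B(y,2^j\dz^k)\setminus B(y,2^{j-1}\dz^k)$ for $j\ge 1$; on the $j$-th annulus the weight contributes a factor $\ls 2^{-j\lz}$, while the doubling bound $V_{\dz^k}(y)\gtrsim 2^{-j\omega}V_{2^j\dz^k}(y)$ converts the denominator into a quantity comparable with $2^{j\omega}/V_{2^j\dz^k}(y)$. The annular contribution is therefore $\ls 2^{-j(\lz-\omega)}\CM g(y)$, and summation in $j\ge 0$ yields the uniform-in-$k$ kernel bound $T_k g(y)\ls \CM g(y)$. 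Substituting, using \eqref{eq-gfun}, applying the H\"older inequality in $L^{p/q}\times L^{(p/q)'}$, invoking the $L^{(p/q)'}$-boundedness of $\CM$ (valid since $(p/q)'\in(1,\fz]$), and finally taking the supremum over admissible $g$ then produces $\|(\dot g_\lz^*)^s_q(f)\|_{L^p(X)}^q\ls\|f\|_{\dot F^s_{p,q}(X)}^q$.

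The main technical point is the uniform-in-$k$ estimate $T_k g(y)\ls\CM g(y)$, whose annular summability $\sum_{j\ge 0}2^{-j(\lz-\omega)}<\fz$ forces exactly $\lz>\omega_0$, reflecting the sharpness of this hypothesis. The constraint $q\le p$ enters only through the legitimacy of $L^{p/q}$-duality; the complementary case $q>p$ is inaccessible by this method and, as indicated in the introduction, must instead be treated via the change-of-angle formula for aperture-$2^j$ variants of the Lusin area function (Proposition~\ref{prop-angle}).
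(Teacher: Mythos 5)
Your proof is correct and follows essentially the same route as the paper: sufficiency via the pointwise bound $\dot\CS^s_q(f)\ls(\dot g_\lz^*)^s_q(f)$ combined with Theorem~\ref{thm-g=s}, and necessity via duality in $L^{p/q}(X)$, the uniform-in-$k$ kernel bound (which the paper quotes from \cite[Proposition 2.2(ii)]{hlyy19} instead of re-deriving it by your annular decomposition), the boundedness of $\CM$ on $L^{(p/q)'}(X)$, and \eqref{eq-gfun}. The only cosmetic difference is that the paper treats $p=q$ separately by a direct Tonelli computation, whereas you absorb it into the duality argument, which is harmless since the supremum representation of the $L^1$-norm of a non-negative function and the boundedness of $\CM$ on $L^\infty(X)$ are both trivially available.
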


\begin{proof}
Let all the notation be the same as in this proposition. The sufficiency
of this proposition holds true by using Theorem \ref{thm-g=s} and the fact
$\dot\CS^\az_q(f)\ls(\dot g_\lz^*)^\az_q(f)$ for any $f\in(\GOO{\bz,\gz})'$;
we omit the details.

Next, we prove the necessity of this proposition.
Let $\omega\in[\omega_0,\fz)$
satisfy \eqref{eq-doub} and all the assumptions of this proposition with $\omega_0$
replaced by $\omega$.
We first consider the case $p=q$. Indeed, let $f\in\dot F^s_{p,p}(X)$. By the Tonelli theorem, $\lz>\omega$, and
\eqref{eq-gfun}, we find that
\begin{align*}
&\lf\|\lf(\dot g_\lz^*\r)^s_p(f)\r\|_{L^p(X)}^p\\
&\quad\ls\int_X\sum_{k=-\fz}^\fz\dz^{-ksp}\int_X|Q_kf(y)|^q
\lf[\frac{\dz^k}{\dz^k+d(x,y)}\r]^{\lz-\omega}\frac{1}{V_{\dz^k}(x)+V(x,y)}\,d\mu(y)\,d\mu(x)\\
&\quad\sim\int_X\sum_{k=-\fz}^\fz\dz^{-ksp}|Q_kf(y)|^q\,d\mu(y)\sim\lf\|\dot g^s_q(f)\r\|_{L^p(X)}^p
\sim\|f\|_{\dot F^s_{p,p}(X)}.
\end{align*}
This finishes the proof of the necessity of this proposition in the case $p=q$.

Now, we consider the case $p>q$. Since $p<\fz$, it then follows that $(p/q)'\in(1,\fz)$ and
$(L^{p/q}(X))'=L^{(p/q)'}(X)$. Moreover, for any $\lz\in(\omega,\fz)$ and $f\in(\GOO{\bz,\gz})'$, we
always have $\|(\dot g_\lz^*)^s_q(f)\|_{L^p(X)}=\|[(\dot g_\lz^*)^s_q(f)]^q\|_{L^{p/q}(X)}^{1/q}$. By this,
the Tonelli theorem, $\lz>\omega$, \cite[Proposition 2.2(ii)]{hlyy19}, and the boundedness of $\CM$ on
$L^{(p/q)'}(X)$ (see, for instance \cite[(3.6)]{cw77}), we conclude that, for any $f\in\dot F^s_{p,q}(X)$
and any non-negative function $\vz\in L^{(p/q)'}(X)$,
\begin{align*}
\lf<\lf[\lf(\dot g_\lz^*\r)^s_q(f)\r]^q,\vz\r>&\ls\int_X\sum_{k=-\fz}^\fz\dz^{-ksq}|Q_kf(y)|^q\CM(\vz)(y)\,d\mu(y)\\
&\ls\lf\|\sum_{k=-\fz}^\fz\dz^{-ksq}|Q_kf|^q\r\|_{L^{p/q}(X)}\|\CM(\vz)\|_{L^{(p/q)'}(X)}
\ls\lf\|\dot g^s_q(f)\r\|_{L^p(X)}^q\|\vz\|_{L^{(p/q)'}(X)}.
\end{align*}
Taking the supremum over all such $\vz$ with $\|\vz\|_{L^{(p/q)'}(X)}\le 1$,
and using \eqref{eq-gfun}, we further obtain
$$
\lf\|\lf(\dot g_\lz^*\r)^s_q(f)\r\|_{L^p(X)}
=\lf\|\lf[\lf(\dot g_\lz^*\r)^s_q(f)\r]^q\r\|_{L^{p/q}(X)}^{1/q}
\ls\lf\|\dot g^s_q(f)\r\|_{L^p(X)}\sim\|f\|_{\dot F^s_{p,q}(X)}.
$$
This finishes the proof of the necessity of this proposition in the case $p>q$,
and hence of Proposition \ref{prop-glp>q}.
\end{proof}

Next, we consider the case $p<q$. To this end, we first introduce the following Littlewood--Paley auxiliary
function. Let $f\in(\GOO{\bz,\gz})'$ with $\bz,\ \gz\in(0,\eta)$, and $\thz\in[1,\fz)$,
where $\eta$ is the same
as in Definition \ref{def-eti}. The
\emph{Littlewood--Paley auxiliary function $\dot\CS^{s,(1)}_{q,\thz}(f)$ of $f$ with aperture
$\thz$} is defined by setting, for any $x\in X$,
$$
\dot\CS^{s,(1)}_{q,\thz}(f)(x):=\lf[\sum_{k\in\zz}\dz^{-ksq}\int_{B(x,\thz\dz^k)}|Q_kf(y)|^q
\,\frac{d\mu(y)}{V_{\dz^k}(y)}\r]^{1/q}.
$$
It is obvious that there exists a constant $C\in[1,\fz)$ such that, for any $f\in(\GOO{\bz,\gz})'$ with
$\bz,\ \gz\in(0,\eta)$, and $\eta$ as in Definition \ref{def-eti},
\begin{equation}\label{eq-ss}
C^{-1}\dot\CS^s_q(f)\le\dot\CS^{s,(1)}_{q,1}(f)\le C\dot\CS^s_q(f).
\end{equation}
We have the following change-of-angle formula of $\dot\CS^{s,(1)}_{q,\thz}$ on $\thz\in(1,\fz)$.

\begin{proposition}\label{prop-angle}
Let $p\in(0,\fz)$, $q\in(0,p)$, $\omega$ be as in \eqref{eq-doub}, and $\eta$ as in Definition \ref{def-eti}. Then there exists a positive
constant $C$ such that, for any $\thz\in(1,\fz)$ and $f\in(\GOO{\bz,\gz})'$ with $\bz,\ \gz\in(0,\eta)$,
$\|\dot\CS^{s,(1)}_{q,\thz}(f)\|_{L^p(X)}\le C\thz^{\omega/p}\|\dot\CS^{s,(1)}_{q,1}(f)\|_{L^p(X)}$.
\end{proposition}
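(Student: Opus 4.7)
The plan is to derive the change-of-angle inequality from a distributional (good-$\lambda$) estimate of the form
\begin{equation*}
\mu(\{x\in X:\ \dot\CS^{s,(1)}_{q,\thz}(f)(x)>\lambda\})\le C\thz^\omega\,\mu(\{x\in X:\ \dot\CS^{s,(1)}_{q,1}(f)(x)>c\lambda\})
\end{equation*}
valid for every $\lambda\in(0,\fz)$, with absolute positive constants $c,C$ independent of $f$, $\thz$, and $\lambda$. Once this is in place, applying the layer cake identity $\|F\|_{L^p(X)}^p=p\int_0^\fz\lambda^{p-1}\mu(\{|F|>\lambda\})\,d\lambda$ to $F:=\dot\CS^{s,(1)}_{q,\thz}(f)$ and making the change of variable $\lambda\mapsto c\lambda$ immediately yields the target inequality with exponent $\thz^{\omega/p}$, since the weight $\thz^\omega$ is reduced to $\thz^{\omega/p}$ upon taking $p$-th roots.

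To prove the distributional inequality, set $O_\lambda:=\{\dot\CS^{s,(1)}_{q,1}(f)>c\lambda\}$ (which may be assumed to have finite measure by a standard truncation, as otherwise the claim is vacuous) and introduce its maximal enlargement $\wz O_\lambda:=\{\CM(\mathbf 1_{O_\lambda})>c_0\thz^{-\omega}\}$ for a small absolute constant $c_0\in(0,1)$ to be chosen. The weak-$(1,1)$ boundedness of $\CM$ on a space of homogeneous type from \cite{cw71,cw77} gives $\mu(\wz O_\lambda)\le C\thz^\omega\mu(O_\lambda)$, reducing everything to the pointwise inclusion $\{\dot\CS^{s,(1)}_{q,\thz}(f)>\lambda\}\subset\wz O_\lambda$. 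Contrapositively, for $x\notin\wz O_\lambda$ one has the density bound $\mu(B\cap O_\lambda)\le c_0\thz^{-\omega}\mu(B)$ for every ball $B$ through $x$, and the task becomes to show $\dot\CS^{s,(1)}_{q,\thz}(f)(x)\le\lambda$ under this hypothesis. The analytic engine is a Fubini-averaging rearrangement which, via the identity $1=V_{\dz^k}(y)^{-1}\int_{B(y,\dz^k)}\,d\mu(z)$ and the doubling equivalence $V_{\dz^k}(y)\sim V_{\dz^k}(z)$ for $d(y,z)<\dz^k$, produces
\begin{equation*}
\int_{B(x,\thz\dz^k)}\frac{|Q_kf(y)|^q}{V_{\dz^k}(y)}\,d\mu(y)\ls\int_{B(x,2A_0\thz\dz^k)}\frac{1}{V_{\dz^k}(z)}\int_{B(z,\dz^k)}\frac{|Q_kf(y)|^q}{V_{\dz^k}(y)}\,d\mu(y)\,d\mu(z),
\end{equation*}
that is, the $k$-th integrand in $\dot\CS^{s,(1)}_{q,\thz}(f)(x)^q$ is controlled by an average over $B(x,2A_0\thz\dz^k)$ of the $k$-th integrand in $\dot\CS^{s,(1)}_{q,1}(f)(z)^q$.

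From here I would split the outer integral over $z$ into its portions in $O_\lambda^c$ (where the collective bound $\dot\CS^{s,(1)}_{q,1}(f)(z)^q\le(c\lambda)^q$ supplies control after summing in $k$) and in $O_\lambda$ (whose relative measure in the enclosing ball is at most $c_0\thz^{-\omega}$), then close the argument by choosing $c_0$ and $c$ small enough to absorb the bad contribution into $\dot\CS^{s,(1)}_{q,\thz}(f)(x)^q$ and to bound the good contribution by $\lambda^q$. The main obstacle is the bad-set absorption step: because $\dot\CS^{s,(1)}_{q,1}(f)^q$ is genuinely unbounded on $O_\lambda$, the bad contribution cannot be dropped pointwise and must instead be rearranged by a second Fubini into a self-bounding form controlled by $c_0\dot\CS^{s,(1)}_{q,\thz}(f)(x)^q$; crucially, the doubling ratios $V_{2A_0\thz\dz^k}(x)/V_{\dz^k}(x)\ls\thz^\omega$ must be tracked carefully through the decomposition so that only a single power $\thz^\omega$ survives in the final weak-type bound, which is precisely what yields the sharp exponent $\omega/p$ rather than $2\omega/p$ or the $\omega/q$ that a naive duality argument against $\CM$ in $L^{(p/q)'}(X)$ would produce.
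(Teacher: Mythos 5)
Your plan hinges on the level-comparable distributional inequality $\mu(\{\dot\CS^{s,(1)}_{q,\thz}(f)>\lambda\})\le C\thz^\omega\,\mu(\{\dot\CS^{s,(1)}_{q,1}(f)>c\lambda\})$ with $c,\ C$ independent of $\thz$, equivalently on the inclusion $\{\dot\CS^{s,(1)}_{q,\thz}(f)>\lambda\}\subset\wz O_\lambda$, and this is genuinely false. Because $\dot\CS^{s,(1)}_{q,\thz}$ integrates $|Q_kf|^q$ over $B(x,\thz\dz^k)$ against the weight $1/V_{\dz^k}(y)$, at a single dominant scale its value can exceed $\sup_X\dot\CS^{s,(1)}_{q,1}(f)$ by a factor comparable to $\thz^{\omega/q}$: if $|Q_{k_0}f|$ is essentially constant on a ball of radius much larger than $\thz\dz^{k_0}$ and the other scales are negligible, then $\dot\CS^{s,(1)}_{q,1}(f)\sim c'\lambda$ with $c'<c$ everywhere, so the right-hand side of your inequality is zero, while $\dot\CS^{s,(1)}_{q,\thz}(f)\sim\thz^{\omega/q}c'\lambda>\lambda$ on a set of large measure once $\thz$ is large. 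Hence no choice of $c_0$ and $c$ makes ``$x\notin\wz O_\lambda$'' imply $\dot\CS^{s,(1)}_{q,\thz}(f)(x)\le\lambda$, and the ``bad-set absorption'' you defer to cannot be carried out. Two further symptoms: your argument nowhere uses the relation between $p$ and $q$, yet it would yield the factor $\thz^{\omega/p}$ for every $p$, contradicting the same example when $p>q$ (there the true ratio of $L^p$-norms is about $\thz^{\omega/q}>\thz^{\omega/p}$); and the good part does not close as described, since for each fixed $k$ the good-set contribution is only $\ls\thz^\omega(c\lambda)^q$, which is not summable over $k\in\zz$, while invoking $\sum_k(\textup{$k$-th piece of }[\dot\CS^{s,(1)}_{q,1}(f)(z)]^q)\le(c\lambda)^q$ forces you to take a supremum over $k$ of the weights $\mathbf 1_{B(x,2A_0\thz\dz^k)}(z)/V_{\dz^k}(z)$, which does not produce an integrable majorant in $z$.

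The correct route keeps your two sound ingredients (the enlargement of $E_t:=\{\dot\CS^{s,(1)}_{q,1}(f)>t\}$ through a maximal operator adapted to the pairs of balls $B(y,\thz\dz^k)\supset B(y,\dz^k)$, with weak-$(1,1)$ constant $\thz^\omega$, and the Fubini-averaging identity) but replaces the false set inclusion by the weaker integral comparison
\begin{equation*}
\int_{\wz E_t^\complement}\lf[\dot\CS^{s,(1)}_{q,\thz}(f)\r]^q\,d\mu
\ls\thz^\omega\int_{E_t^\complement}\lf[\dot\CS^{s,(1)}_{q,1}(f)\r]^q\,d\mu,
\end{equation*}
which follows from Tonelli plus the density estimate $\mu(E_t^\complement\cap B(y,\dz^k))\ge\frac12\mu(B(y,\dz^k))$ valid whenever $B(y,\thz\dz^k)$ meets $\wz E_t^\complement$. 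Chebyshev then gives $\mu(\{\dot\CS^{s,(1)}_{q,\thz}(f)>t\})\ls\thz^\omega[\mu(E_t)+t^{-q}\int_0^t r^{q-1}\mu(E_r)\,dr]$, and the extra term is only removed at the level of the $L^p$-norm by a Hardy-type integration in $t$, which is exactly where the restriction $p<q$ enters (note the hypothesis in the statement should accordingly be read as $p\in(0,q)$, consistently with its use for $q\in(p,\fz)$ in the $g_\lz^*$-characterization). That extra term is not an artifact: as the example above shows, it cannot be dropped, so any proof must carry it and pay for it with the $p<q$ integration rather than with a same-level good-$\lambda$ inequality.
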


\begin{proof}
Fix $\thz\in(1,\fz)$ and let $s$, $p$, and $q$ be the same as in this proposition.
For any non-negative function $g$ and any $x\in X$, let
$$
\wz\CM(g)(x):=\sup_{k\in\zz}\sup_{d(x,y)<\thz\dz^k}\frac{1}{V_{\dz^k}(y)}\int_{B(y,\dz^k)}g(z)
\,d\mu(z).
$$
It is easy to see that, for any $k\in\zz$, $y\in B(x,\thz\dz^k)$, and $z\in B(y,\dz^k)$,
$d(x,z)\le A_0[d(x,y)+d(y,z)]<2A_0\thz\dz^k$,
which further implies that $B(y,\dz^k)\subset B(x,2A_0\thz\dz^k)$. By this, we conclude that, for any
$k\in\zz$, $x\in X$, and $y\in B(x,\thz\dz^k)$,
\begin{align*}
\frac{1}{V_{\dz^k}(y)}\int_{B(y,\dz^k)}g(z)\,d\mu(z)&\le\frac{V_{2A_0\thz\dz^k}(x)}{V_{\dz^k}(y)}
\frac{1}{V_{2A_0\thz\dz^k}(x)}\int_{B(x,2A_0\thz\dz^k)}g(z)\,d\mu(z)\\
&\ls\frac{V_{2A_0\thz\dz^k}(y)}{V_{\dz^k}(y)}\CM(g)(x)\ls\thz^{\omega}\CM(g)(x),
\end{align*}
which, together with the boundedness of $\CM$ from $L^1(X)$ to $L^{1,\fz}(X)$ (see, for instance
\cite[pp.\ 71--72, Theorem 2.1]{cw71}), further implies that, for any
$r\in(0,\fz)$,
\begin{equation}\label{eq-w11}
\mu\lf(\lf\{x\in X:\ \wz\CM(g)(x)>r\r\}\r)\ls\frac{\thz^\omega}{r}\|g\|_{L^1(X)}.
\end{equation}

Next, for any $t\in(0,\fz)$ and $f\in(\GOO{\bz,\gz})'$ with $\bz$ and $\gz$ as in this proposition, let
$$
E_t:=\lf\{x\in X:\ \dot\CS^{s,(1)}_{q,1}(f)(x)>t\r\}
$$
and $\wz{E}_t:=\{x\in X:\ \wz\CM({\mathbf 1}_{E_t})(x)>1/2\}$. We claim that, for any $t\in(0,\fz)$,
\begin{equation}\label{eq-cl}
\int_{\wz E_t^\complement}\lf[\dot\CS^{s,(1)}_{q,\thz}(f)(x)\r]^q\,d\mu(x)
\ls\thz^\omega\int_{E_t^\complement}\lf[\dot\CS^{s,(1)}_{q,1}(f)(x)\r]^q\,d\mu(x).
\end{equation}
Assuming this for the moment, we use the Chebyshev inequality, \eqref{eq-w11} with $g:=\mathbf 1_{E_t}$ therein,
and \eqref{eq-cl} to conclude that, for any $t\in(0,\fz)$,
\begin{align*}
\mu\lf(\lf\{x\in X:\ \dot\CS^{s,(1)}_{q,\thz}(f)(x)>t\r\}\r)&\le\mu\lf(\wz{E}_t\r)
+\mu\lf(\lf\{x\in\wz{E}_t^\complement:\ \dot\CS^{s,(1)}_{q,\thz}(f)(x)>t\r\}\r)\\
&\ls\thz^\omega\mu\lf(E_t\r)+t^{-q}\thz^\omega\int_{E_t^\complement}\lf[\dot\CS^{s,(1)}_{q,1}(f)(x)\r]^q
\,d\mu(x)\\
&\ls\thz^\omega\lf[\mu\lf(E_t\r)+t^{-q}\int_0^t r^{q-1}\mu(E_r)\,dr\r].
\end{align*}
Thus, by this, the Tonelli theorem, and $p\in(0,q)$, we further find that
\begin{align*}
\lf\|\dot\CS^{s,(1)}_{q,\thz}(f)\r\|_{L^p(X)}^p
&\ls\thz^\omega\lf[\int_0^\fz t^{p-1}\mu(E_t)\,dt+\int_0^\fz t^{p-q-1}\int_0^t r^{q-1}\mu(E_r)\,dr\,dt\r]\\
&\sim\thz^\omega\lf\|\dot\CS^{s,(1)}_{q,1}(f)\r\|_{L^p(X)}^p+\thz^\omega\int_0^\fz r^{p-1}\mu(E_r)\,dr
\sim\thz^\omega\lf\|\dot\CS^{s,(1)}_{q,1}(f)\r\|_{L^p(X)}^p.
\end{align*}
This finishes the proof of Proposition \ref{prop-angle} under the assumption \eqref{eq-cl}.

It remains to show \eqref{eq-cl}. Fix $t\in(0,\fz)$ and, for any $y\in X$, let
$\rho(y):=\inf_{x\in\wz E_t^\complement}d(x,y)$. It is then easy to see that, for any $k\in\zz$ and $x,\ y\in X$,
$x\in\wz E_t^\complement\cap B(y,\thz\dz^k)$ implies that $\rho(y)<\thz\dz^k$. By this and the
Tonelli theorem, we obtain
\begin{align}\label{eq-1}
\int_{\wz E_t^\complement}\lf[\dot\CS^{s,(1)}_{q,\thz}(f)(x)\r]^q\,d\mu(x)
&=\sum_{k=-\fz}^\fz\dz^{-ksq}\int_{\rho(y)<\thz\dz^k}|Q_kf(y)|^q
\mu\lf(\wz E_t^\complement\cap B(y,\thz\dz^k)\r)\,\frac{d\mu(y)}{V_{\dz^k}(y)}\noz\\
&\ls\thz^\omega\sum_{k=-\fz}^\fz\dz^{-ksq}\int_{\rho(y)<\thz\dz^k}|Q_kf(y)|^q\mu(B(y,\dz^k))
\,\frac{d\mu(y)}{V_{\dz^k}(y)}.
\end{align}
Notice that, if $\rho(y)<\thz\dz^k$, then $\wz E_t^\complement\cap B(y,\thz\dz^k)\neq\emptyset$. We
can then choose a $y_0\in\wz E_t^\complement\cap B(y,\thz\dz^k)$ to conclude that
$$
\mu(E_t\cap B(y,\dz^k))=\int_{B(y,\dz^k)}{\mathbf 1}_{E_t}(z)\,d\mu(z)
\le\mu(B(y,\dz^k))\wz\CM\lf({\mathbf 1}_{E_t}\r)(y_0)\le\frac 12\mu(B(y,\dz^k)).
$$
Thus, $\mu(E_t^\complement\cap B(y,\dz^k))\ge \frac 12\mu(B(y,\dz^k))$. This, combined with \eqref{eq-1} and
the Tonelli theorem, further implies that
\begin{align*}
\int_{\wz E_t^\complement}\lf[\dot\CS^{s,(1)}_{q,\thz}(f)(x)\r]^q\,d\mu(x)
&\ls\thz^\omega\sum_{k=-\fz}^\fz\dz^{-ksq}\int_{X}|Q_kf(y)|^q\mu\lf(E_t^\complement\cap B(y,\dz^k)\r)
\,\frac{d\mu(y)}{V_{\dz^k}(y)}\\
&\sim\thz^\omega\sum_{k=-\fz}^\fz\dz^{-ksq}\int_{E_t^\complement}\int_{B(x,\dz^k)}|Q_kf(y)|^q
\,\frac{d\mu(y)}{V_{\dz^k}(y)}\,d\mu(x)\\
&\sim\thz^\omega\int_{E_t^\complement}\lf[\dot\CS^{\az,(1)}_{q,1}(f)(x)\r]^q\,d\mu(x).
\end{align*}
This finishes the proof of \eqref{eq-cl} and hence of Proposition \ref{prop-angle}.
\end{proof}

With the help of Proposition \ref{prop-angle}, we have the following Littlewood--Paley $g_\lz^*$-function
characterization of Triebel--Lizorkin spaces in the case $q\in(p,\fz)$.

\begin{proposition}\label{prop-glp<q}
Let $s$, $p$, $q$, $\bz$, and $\gz$ be as in Definition \ref{def-btl}(ii), $q\in(p,\fz)$,
and $\lz\in(q\omega_0/p,\fz)$ with $\omega_0$ as in \eqref{eq-updim}. Then $f\in \dot F^s_{p,q}(X)$ if and only if
$f\in(\GOO{\bz,\gz})'$ and $(\dot g_\lz^*)^s_q\in L^p(X)$. Moreover, there exists a constant $C\in[1,\fz)$,
independent of $f$, such that
$C^{-1}\|f\|_{\dot F^s_{p,q}(X)}\le\|(\dot g_\lz^*)^s_q(f)\|_{L^p(X)}\le
C\|f\|_{\dot F^s_{p,q}(X)}$.
\end{proposition}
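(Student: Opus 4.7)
The plan is to prove necessity only, since sufficiency follows immediately from the pointwise inequality $\dot\CS^s_q(f)\le(\dot g_\lz^*)^s_q(f)$ combined with Theorem \ref{thm-g=s}. Fix $\omega\in[\omega_0,\fz)$ satisfying \eqref{eq-doub} such that $\lz>q\omega/p$, which is possible since $\omega$ can be taken arbitrarily close to $\omega_0$. Let $f\in\dot F^s_{p,q}(X)$.

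The first step is to dyadically decompose the domain of integration in \eqref{eq-glstar}. For each $k\in\zz$ and $x\in X$, I split $X$ into the ball $B(x,\dz^k)$ and the annuli $B(x,2^{j+1}\dz^k)\setminus B(x,2^j\dz^k)$ for $j\in\zz_+$. On each such annulus, one has $d(x,y)\sim 2^j\dz^k$, so the factor $[\dz^k/(\dz^k+d(x,y))]^\lz$ is bounded by a constant multiple of $2^{-j\lz}$, while the denominator $V_{\dz^k}(x)+V_{\dz^k}(y)$ dominates $V_{\dz^k}(y)$. This yields the pointwise estimate
\begin{equation*}
\lf[\lf(\dot g_\lz^*\r)^s_q(f)(x)\r]^q
\ls\sum_{j=0}^{\fz}2^{-j\lz}\sum_{k\in\zz}\dz^{-ksq}
\int_{B(x,2^{j+1}\dz^k)}|Q_kf(y)|^q\,\frac{d\mu(y)}{V_{\dz^k}(y)}
=\sum_{j=0}^\fz 2^{-j\lz}\lf[\dot\CS^{s,(1)}_{q,2^{j+1}}(f)(x)\r]^q.
\end{equation*}

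The second step uses that $p/q\in(0,1)$, so that raising the above to the power $p/q$ and applying Lemma \ref{lem-pin} gives, pointwise,
\begin{equation*}
\lf[\lf(\dot g_\lz^*\r)^s_q(f)(x)\r]^p
\ls\sum_{j=0}^\fz 2^{-j\lz p/q}\lf[\dot\CS^{s,(1)}_{q,2^{j+1}}(f)(x)\r]^p.
\end{equation*}
Integrating over $X$ and invoking the change-of-angle formula of Proposition \ref{prop-angle}, which yields $\|\dot\CS^{s,(1)}_{q,2^{j+1}}(f)\|_{L^p(X)}^p\ls 2^{(j+1)\omega}\|\dot\CS^{s,(1)}_{q,1}(f)\|_{L^p(X)}^p$, I obtain
\begin{equation*}
\lf\|\lf(\dot g_\lz^*\r)^s_q(f)\r\|_{L^p(X)}^p
\ls\lf[\sum_{j=0}^\fz 2^{-j(\lz p/q-\omega)}\r]\lf\|\dot\CS^{s,(1)}_{q,1}(f)\r\|_{L^p(X)}^p.
\end{equation*}
The geometric series converges precisely because of the choice $\lz>q\omega/p$. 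Finally, the equivalence \eqref{eq-ss} together with Theorem \ref{thm-g=s} turns $\|\dot\CS^{s,(1)}_{q,1}(f)\|_{L^p(X)}\sim\|\dot\CS^s_q(f)\|_{L^p(X)}\sim\|f\|_{\dot F^s_{p,q}(X)}$, which closes the estimate.

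The main obstacle, already handled by Proposition \ref{prop-angle}, is controlling the aperture-dependent quantity $\|\dot\CS^{s,(1)}_{q,\thz}(f)\|_{L^p(X)}$ by $\thz^{\omega/p}\|\dot\CS^{s,(1)}_{q,1}(f)\|_{L^p(X)}$; apart from this, the argument is a standard annular decomposition plus the quasi-triangle inequality for $L^{p/q}$. The sharpness threshold $\lz>q\omega_0/p$ enters in exactly one place, namely the convergence of the geometric series, which is where the interplay between the $2^{-j\lz p/q}$ decay from the kernel and the $2^{j\omega}$ aperture loss from Proposition \ref{prop-angle} is balanced; optimizing by letting $\omega\downarrow\omega_0$ recovers the stated range for $\lz$.
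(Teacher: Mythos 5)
Your argument is correct and is essentially the same as the paper's proof: sufficiency from $\dot\CS^s_q(f)\le(\dot g_\lz^*)^s_q(f)$ and Theorem \ref{thm-g=s}, then for necessity an annular decomposition reducing $(\dot g_\lz^*)^s_q(f)$ to $\sum_j 2^{-j\lz}[\dot\CS^{s,(1)}_{q,2^j}(f)]^q$, the $p/q<1$ power inequality, the change-of-angle estimate of Proposition \ref{prop-angle}, and convergence of the geometric series from $\lz>q\omega/p$ with $\omega$ chosen close enough to $\omega_0$. The only deviations are harmless indexing choices (apertures $2^{j+1}$ versus $2^j$), so no further comment is needed.
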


\begin{proof}
Let all the notation be as in this proposition. The sufficiency of this proposition
holds true directly due to Theorem \ref{thm-g=s} and the fact $(\dot g_\lz^*)^s_q(f)\ge\dot\CS^s_q(f)$;
we omit the details. We only prove the necessity of this proposition. Let $s$, $p$, $q$, and $\lz$
be as in this proposition. To this end, we have, for any $f\in\dot F^s_{p,q}(X)$ and $x\in X$,
\begin{align*}
\lf[\lf(\dot g_\lz^*\r)^s_q(f)(x)\r]^q&\ls\lf[\dot\CS^{\az,(1)}_{q,1}(f)(x)\r]^q+\sum_{j=1}^\fz 2^{-j\lz}
\sum_{k=-\fz}^\fz\dz^{-ksq}\int_{2^{j-1}\dz^k\le d(x,y)<2^j\dz^k}|Q_kf(y)|^q\,\frac{d\mu(y)}{V_{\dz^k}(y)}\\
&\ls\sum_{j=0}^\fz 2^{-j\lz}\lf[\dot\CS^{s,(1)}_{q,2^j}(f)(x)\r]^q.
\end{align*}
Choose $\omega\in[\omega_0,\fz)$ satisfying \eqref{eq-doub} and all the assumptions of this proposition.
By this, $p<q$, Proposition \ref{prop-angle}, \eqref{eq-ss}, and Theorem \ref{thm-g=s}, we conclude that,
for any $f\in\dot F^s_{p,q}(X)$,
\begin{align*}
\lf\|\lf(\dot g_\lz^*\r)^s_q(f)\r\|_{L^p(X)}^p&\ls\sum_{j=0}^\fz 2^{-j\lz p/q}
\int_X\lf[\dot\CS^{s,(1)}_{q,2^j}(f)(x)\r]^p\,d\mu(x)\ls\sum_{j=0}^\fz
2^{-j\lz p/q}2^{j\omega}\lf\|\dot\CS^{s,(1)}_{q,1}(f)\r\|_{L^p(X)}^p\\
&\sim\lf\|\dot\CS^{s,(1)}_{q,1}(f)\r\|_{L^p(X)}^p\sim \lf\|\dot\CS^s_q(f)\r\|_{L^p(X)}^p
\sim\|f\|_{\dot F^s_{p,q}(X)}^p
\end{align*}
due to $\lz>q\omega/p$. This finishes the proof of the necessity
of this proposition, and hence of
Proposition \ref{prop-glp<q}.
\end{proof}

Combining Propositions \ref{prop-glp<q} and \ref{prop-glp>q}, we
directly obtain the following  Littlewood--Paley $g_\lz^*$-function
characterization of $\dot F^s_{p,q}(X)$; we omit
the details here.

\begin{theorem}\label{thm-glstar}
Let $s$, $p$, $q$, $\bz$, and $\gz$ be as in Definition \ref{def-btl}(ii), $q\in(p(s,\bz\wedge\gz),\fz)$, and
$\lz\in(\max\{\omega_0,q\omega_0/p\},\fz)$ with $\omega_0$ as in \eqref{eq-updim}. Then $f\in\dot F^s_{p,q}(X)$ if and only if
$f\in(\GOO{\bz,\gz})'$ and $(\dot g_\lz^*)^s_q(f)\in L^p(X)$. Moreover, there exists a constant $C\in[1,\fz)$,
independent of $f$, such that
$$
C^{-1}\lf\|\lf(\dot g_\lz^*\r)^s_q(f)\r\|_{L^p(X)}\le \|f\|_{\dot F^s_{p,q}(X)}
\le C\lf\|\lf(\dot g_\lz^*\r)^s_q(f)\r\|_{L^p(X)}.
$$
\end{theorem}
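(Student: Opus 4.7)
The plan is to derive Theorem \ref{thm-glstar} as an immediate case-by-case consequence of the two preceding propositions (Proposition \ref{prop-glp>q} and Proposition \ref{prop-glp<q}), with the hypothesis on $\lz$ designed precisely to cover both ranges of $q$ in a unified statement. First I would observe that the assumption $\lz\in(\max\{\omega_0,q\omega_0/p\},\fz)$ forces $\lz>\omega_0$ unconditionally, and, moreover, $\lz>q\omega_0/p$ whenever $q>p$. Thus, no matter how $q$ compares with $p$, the relevant hypothesis of at least one of the two propositions is automatically satisfied.

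Next I would split the proof into two cases according to whether $q\in(p(s,\bz\wedge\gz),p]$ or $q\in(p,\fz)$. In the first case $q\le p$, since $\lz>\omega_0$, Proposition \ref{prop-glp>q} applies directly and yields both directions of the equivalence together with the norm comparison
\[
C^{-1}\|f\|_{\dot F^s_{p,q}(X)}\le \lf\|\lf(\dot g_\lz^*\r)^s_q(f)\r\|_{L^p(X)}\le C\|f\|_{\dot F^s_{p,q}(X)}.
\]
In the second case $q>p$, we have $q\omega_0/p>\omega_0$, so the constraint $\lz>\max\{\omega_0,q\omega_0/p\}$ reduces to $\lz>q\omega_0/p$, which is exactly the hypothesis of Proposition \ref{prop-glp<q}; invoking that proposition gives the desired equivalence in this range.

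Combining the two cases, with the common constant $C$ taken as the maximum of the constants produced by Propositions \ref{prop-glp>q} and \ref{prop-glp<q}, yields the stated norm equivalence for every admissible choice of parameters. There is essentially no obstacle here: the only mild verification is to check that the range of $q$ stated in Theorem \ref{thm-glstar}, namely $q\in(p(s,\bz\wedge\gz),\fz)$, is the union of the two ranges appearing in the propositions (which it is, since Proposition \ref{prop-glp>q} covers $(p(s,\bz\wedge\gz),p]$ via Definition \ref{def-btl}(ii) and Proposition \ref{prop-glp<q} covers $(p,\fz)$), and that the single threshold $\max\{\omega_0,q\omega_0/p\}$ on $\lz$ is exactly tight so that neither proposition requires more. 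Hence the theorem follows without any further computation.
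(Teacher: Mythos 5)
Your proposal is correct and follows exactly the paper's route: the paper obtains Theorem \ref{thm-glstar} by directly combining Propositions \ref{prop-glp>q} and \ref{prop-glp<q}, noting as you do that $\lz>\max\{\omega_0,q\omega_0/p\}$ supplies the hypothesis $\lz>\omega_0$ when $q\le p$ and $\lz>q\omega_0/p$ when $q>p$. Nothing further is needed.
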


\begin{remark}\label{rem-glz}
P\"{a}iv\"{a}rinta \cite{pai82} showed that, when $\lz\in(nq/\min\{p,q\}+n,\fz)$, the Littlewood--Paley
$g_\lz^*$-function characterization of the Triebel--Lizorkin space $F^s_{p,q}(\rn)$ holds true by observing that
$\lz$ in \eqref{eq-glstar} equals to $q\lz$ in \cite{pai82}. Then it is easy to see that Theorem
\ref{thm-glstar} is better than \cite[Remark 2.6]{pai82} (see also \cite[pp.\ 182--183, Theorem 2.12.1]{tri83}). In
2013, Hu \cite{hu13} proved that, if $X$ is a stratified Lie group, then the Littlewood--Paley
$g_\lz^*$-function characterization of Triebel--Lizorkin spaces holds true when $\lz\in(\max\{\omega_0 q/p,\omega_0\},\fz)$
with $\omega_0$ as in \eqref{eq-updim},
by also observing that $\lz$ in \eqref{eq-glstar} equals to $q\lz$ in \cite{hu13} (see \cite[Proposition 14]{hu13}).
Therefore, when $X$ is a stratified Lie group which is a space of homogeneous type,
Theorem \ref{thm-glstar} coincides with \cite[Proposition 14]{hu13}.
Moreover, since $\dot F^0_{p,2}(X)=H^p(X)$ (the Hardy space)
when $p\in(\omega_0/(\omega_0+\eta),1]$ with $\eta$ as in Definition \ref{def-eti},
we know that, in this special case,
Theorem \ref{thm-glstar} coincides with \cite[Theorem 5.12]{hhllyy19}. Thus, the range of $\lz$ in Theorem \ref{thm-glstar}
is the known best possible.
\end{remark}

\section{Inhomogeneous counterparts}\label{s-in}

In this section, we consider inhomogeneous counterparts of results in Sections \ref{s-wave} through
\ref{s-lp}, which are listed  in three subsections. In the first subsection, we use the inhomogeneous wavelet
system to construct an exp-IATI. In the second subsection, we establish the wavelet
characterization of inhomogeneous Besov and Triebel--Lizorkin spaces. Finally, in the third subsection,
we extend the results in Sections \ref{s-ado}, \ref{s-mol}, and \ref{s-lp} to the inhomogeneous case.
From now on, we do \emph{not} need to assume $\mu(X)=\fz$, namely, $\mu(X)$ can be finite or
infinite.

\subsection{Existence of exp-IATIs}\label{ss-ex}

In this subsection, we use the inhomogeneous wavelet system on a given space $X$ of
homogeneous type to construct an exp-IATI.

For any $k\in\zz$, let $\{\phi_\az^k\}_{\az\in\CA_k}$ be as in \cite[Theorem 6.1]{ah13}.
Then we have the following conclusion.

\begin{proposition}\label{prop-ieti1}
Let $\eta$ be the same as in Definition \ref{def-eti} and, for any $k\in\zz_+$ and $x,\ y\in X$,
$P_k(x,y):=\sum_{\az\in\CA_k}\phi_\az^k(x)\phi_\az^k(y)$. Then
there exist constants $C,\ \nu\in(0,\fz)$ and $a\in(0,1]$ such that, for any $k\in\zz_+$, $P_k$ has the following
properties:
\begin{enumerate}
\item (the \emph{symmetry}) for any $x,\ y\in X$, $P_k(x,y)=P_k(y,x)$;
\item (the \emph{size condition}) for any $x,\ y\in X$,
$$
|P_k(x,y)|\le C\frac{1}{\sqrt{V_{\dz^k}(x)V_{\dz^k}(y)}}\exp\lf\{-\nu\lf[\frac{d(x,y)}{\dz^k}\r]^a\r\}=:C\wz E_k(x,y);
$$
\item (the \emph{H\"{o}lder regularity condition}) for any $x,\ x',\ y\in X$ with $d(x,x')\le\dz^k$,
$$
|P_k(x,y)-P_k(x',y)|\le C\lf[\frac{d(x,x')}{\dz^k}\r]^\eta\wz E_k(x,y);
$$
\item (the \emph{second difference regularity condition}) for any $x,\ x',\ y,\ y'\in X$ with
$d(x,x')\le\dz^k$ and $d(y,y')\le\dz^k$,
\begin{align*}
|[P_k(x,y)-P(x',y)]-[P_k(x,y')-P_k(x,y')]|\le C\lf[\frac{d(x,x')}{\dz^k}\r]^\eta\lf[\frac{d(y,y')}{\dz^k}\r]^\eta
\wz E_k(x,y);
\end{align*}
\item (the \emph{conservation law}) for any $x\in X$,
$$
\int_X P_k(x,y)\,d\mu(y)=1.
$$
\end{enumerate}
\end{proposition}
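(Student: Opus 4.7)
The plan is to deduce each of (i) through (v) from the corresponding pointwise properties of the Auscher--Hyt\"onen scaling functions $\{\phi_\az^k\}_{\az\in\CA_k}$ stated in \cite[Theorem 6.1]{ah13}, by summing over $\az\in\CA_k$ and exploiting the geometric estimates already collected in Section \ref{s-pre}. Property (i) is completely trivial: from the definition $P_k(x,y)=\sum_\az\phi_\az^k(x)\phi_\az^k(y)$, the factors commute, so $P_k(x,y)=P_k(y,x)$.

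For the size estimate (ii), I would start from the known pointwise bound
$|\phi_\az^k(x)|\lesssim[V_{\dz^k}(z_\az^k)]^{-1/2}\exp\{-\nu[d(x,z_\az^k)/\dz^k]^a\}$
and then write
$$
|P_k(x,y)|\le\sum_{\az\in\CA_k}|\phi_\az^k(x)|\,|\phi_\az^k(y)|.
$$
By the sub-additivity inequality $[d(x,z_\az^k)]^a+[d(y,z_\az^k)]^a\gtrsim[d(x,y)/2]^a$ (for $a\in(0,1]$) applied after splitting $d(x,y)\le A_0[d(x,z_\az^k)+d(z_\az^k,y)]$, one of the two exponentials provides the factor $\exp\{-\nu'[d(x,y)/\dz^k]^a\}$. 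The remaining sum collapses, after absorbing constants, to a sum of the type handled in Lemma \ref{lem-bbes} (viewing the exponential decay as a polynomial majorant of arbitrary order) and yields the factor $[V_{\dz^k}(x)V_{\dz^k}(y)]^{-1/2}$ via the doubling property and the separation/covering conditions of the reference points $\{z_\az^k\}$. This gives (ii).

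Property (iii) is proved by the same summation trick, but one replaces the bound on $|\phi_\az^k(x)-\phi_\az^k(x')|$ using the $\eta$-H\"older regularity condition from \cite[Theorem 6.1]{ah13}: each term gains the factor $[d(x,x')/\dz^k]^\eta$, and the remaining product $|\phi_\az^k(y)|$ is treated exactly as above. Property (iv) is proved identically, applying the H\"older regularity simultaneously in $x$ and in $y$, which gives the two H\"older factors and one copy of $\wz E_k(x,y)$.

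The main obstacle is (v), where I cannot rely on any pointwise estimate: the identity $\int_X P_k(x,y)\,d\mu(y)=1$ encodes a structural property of the scaling basis, namely that the nested subspaces $V_k:=\overline{\operatorname{span}}\{\phi_\az^k:\az\in\CA_k\}$ contain the constant function $\mathbf 1$ (at least in the sense that the orthogonal projection of $\mathbf 1$ to $V_k$ is $\mathbf 1$ itself). By \cite[Theorem 6.1]{ah13}, this reproducing property is built into the Auscher--Hyt\"onen construction; consequently, since $P_k$ is the kernel of the orthogonal projection onto $V_k$, we can write formally (and rigorously after a standard truncation or duality argument justified by (ii))
$$
\int_X P_k(x,y)\,d\mu(y)=\sum_{\az\in\CA_k}\phi_\az^k(x)\int_X\phi_\az^k(y)\,d\mu(y)=1
$$
for every $x\in X$. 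When $\mu(X)=\fz$, some care is needed because $\mathbf 1\notin L^2(X)$; I would handle this by testing the projection identity against a compactly supported approximation $\mathbf 1_{B(x,R)}$ and letting $R\to\fz$, using the exponential decay from (ii) to control the tail, and then recovering $P_k(\mathbf 1)=\mathbf 1$ pointwise. This completes (v) and thus the proposition.
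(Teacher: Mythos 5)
For parts (i)--(iv) your argument is essentially the paper's: both sum the pointwise exponential-decay and H\"older bounds for $\phi_\az^k$ from \cite[Theorem 6.1]{ah13} over $\az\in\CA_k$, split the quasi-triangle inequality to extract $\exp\{-\nu'[d(x,y)/\dz^k]^a\}$, and control the remaining sum over the $c_0\dz^k$-separated reference points by doubling (the paper packages this summation step as a citation of \cite[Lemma 6.4]{ah13}); your use of Lemma \ref{lem-bbes} as a polynomial majorant is a harmless variant of the same computation.

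The genuine gap is in (v). You correctly identify that the conservation law amounts to the statement that the orthogonal projection $P_k$ onto $V_k$ reproduces constants, but you then simply assert that this ``is built into the Auscher--Hyt\"onen construction'' by \cite[Theorem 6.1]{ah13}. That theorem only provides the orthonormal basis $\{\phi_\az^k\}_{\az\in\CA_k}$ of $V_k$ with decay and H\"older regularity; it says nothing about $\mathbf 1$ being reproduced by $V_k$, so the second equality in your display
$\int_X P_k(x,y)\,d\mu(y)=\sum_{\az\in\CA_k}\phi_\az^k(x)\int_X\phi_\az^k(y)\,d\mu(y)=1$
is exactly the claim to be proved, not a consequence of what you have quoted. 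Your truncation device (testing against $\mathbf 1_{B(x,R)}$ and letting $R\to\fz$) only resolves the integrability issue $\mathbf 1\notin L^2(X)$; it gives no reason why the resulting limit equals $1$, so as written the argument is circular. The missing ingredient, and the route the paper takes, is the spline system: each $s_\az^k$ of \cite[(3.1)]{ah13} lies in $V_k$ and $\sum_{\az\in\CA_k}s_\az^k\equiv 1$ by \cite[Theorem 3.1]{ah13}; expanding every $s_\az^k$ in the orthonormal basis $\{\phi_\bz^k\}_{\bz\in\CA_k}$, summing over $\az$, and justifying the interchange of sums and integrals by the exponential-decay estimates yields $\int_X P_k(x,y)\,d\mu(y)=1$ for almost every $x$, and a final continuity argument for $x\mapsto\int_X P_k(x,y)\,d\mu(y)$ upgrades this to every $x\in X$ (a point your sketch also passes over). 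Without the partition-of-unity input (or an equivalent substitute), part (v) is not established.
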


\begin{proof}
The proof of (i) is obvious by the definition of $\{P_k\}_{k=0}^\fz$. For (ii), by
\cite[Theorem 6.1 and Lemma 6.4]{ah13}, we find that, for any $k\in\zz_+$ and $x,\ y\in X$,
\begin{align*}
\sum_{\az\in\CA_k}\lf|\phi_\az^k(x)\phi_\az^k(y)\r|&\ls\sum_{\az\in\CA_k}\frac 1{V_{\dz^k}(z_\az^k)}
\exp\lf\{-c\lf[\frac{d(x,z_\az^k)}{\dz^k}\r]^a\r\}\exp\lf\{-c\lf[\frac{d(y,z_\az^k)}{\dz^k}\r]^a\r\}\\
&\ls\frac{1}{\sqrt{V_{\dz^k}(x)V_{\dz^k}(y)}}\exp\lf\{-c'\lf[\frac{d(x,y)}{\dz^k}\r]^a\r\}.
\end{align*}
Here and thereafter, $c\in(0,\fz)$ and $c'\in(0,c)$, which both are independent of $k$, $x$, and
$y$. The proofs of (iii) and (iv) are similar to that of (i); we omit the details here.

Finally, we prove (v). Fix $k\in\zz$. Let $s_\az^k$, for any $\az\in\CA_k$, be the same as in \cite[(3.1)]{ah13},
and $V_k$  the same as in \cite[Theorem 5.1]{ah13}. Then
$s_\az^k\in V_k$. Moreover, since $\{\phi_\bz^k\}_{\bz\in\CA_k}$ is an orthonormal basis of $V_k$
(see \cite[Theorem 6.1]{ah13}), it then follows that, for any $\az\in\CA_k$ and almost every $x\in X$,
\begin{equation}\label{eq-orthdec}
s_\az^k(x)=\sum_{\bz\in\CA_k}\lf<s_\az^k,\phi_\bz^k\r>\phi_\bz^k(x)=
\sum_{\bz\in\CA_k}\int_X s_\az^k(y)\phi_\bz^k(y)\,d\mu(y)\phi_\bz^k(x).
\end{equation}
By \cite[Theorems 3.1 and 6.1, and Lemma 6.4]{ah13}, we find that, for any $x\in X$,
\begin{align*}
&\sum_{\az\in\CA_k}\sum_{\bz\in\CA_k}\int_X\lf|s_\az^k(y)\phi_\bz^k(y)\r|\,d\mu(y)\lf|\phi_\bz^k(x)\r|\\
&\quad\ls\sum_{\bz\in\CA_k}\frac 1{\sqrt{V_{\dz^k}(x_\bz^k)}}
\exp\lf\{-c\lf[\frac{d(x,x_\bz^k)}{\dz^k}\r]^a\r\}\int_{X}
\frac 1{\sqrt{V_{\dz^k}(x_\bz^k)}}\exp\lf\{-c\lf[\frac{d(y,x_\bz^k)}{\dz^k}\r]^a\r\}\,d\mu(y)\\
&\quad\ls\sum_{\bz\in\CA_k}\frac{1}{V_{\dz^k}(x_\bz^k)}
\exp\lf\{-c'\lf[\frac{d(x,x_\bz^k)}{\dz^k}\r]^a\r\}\ls 1.
\end{align*}
From this, the Fubini theorem, \eqref{eq-orthdec}, and \cite[Theorem 3.1]{ah13}, we deduce that,
for almost every $x\in X$,
\begin{align*}
1&=\sum_{\az\in\CA_k}s_\az^k(x)=\sum_{\az\in\CA_k}\sum_{\bz\in\CA_k}\int_X s_\az^k(y)\phi_\bz^k(y)\,d\mu(y)\phi_\bz^k(x)\\
&=\int_X\sum_{\bz\in\CA_k}\phi_\bz^k(x)\phi_\bz^k(y)\,d\mu(y)=\int_X P_k(x,y)\,d\mu(y).
\end{align*}
Noting that the function $F(\cdot):=\int_X P_k(\cdot,y)\,d\mu(y)$ is continuous on $X$, we then conclude that
(v) holds true for any $x\in X$. This finishes the proof of Proposition \ref{prop-ieti1}.
\end{proof}

Using this, we construct an exp-IATI on $X$, which is stated in the following theorem.

\begin{theorem}\label{thm-eieti}
For any $k\in\zz_+$ and $x,\ y\in X$, let
$$
Q_k(x,y):=\begin{cases}
\displaystyle \sum_{\az\in\CA_0}\phi_\az^0(x)\phi_\az^0(y) & \textit{if } k=0,\\
\displaystyle \sum_{\az\in\CG_{k-1}}\psi_\az^{k-1}(x)\psi_\az^{k-1}(y) & \textit{if } k\in\nn.\\
\end{cases}
$$
Then $\{Q_k\}_{k=0}^\fz$ is an {\rm exp-IATI}. Moreover, for any $f\in(\go{\bz,\gz})'$ with
$\bz,\ \gz\in(0,\eta)$,
\begin{equation}\label{eq-iwave}
f=\sum_{\az\in\CA_0}\lf<f,\phi_\az^0\r>\phi_\az^0+\sum_{k\in\nn}\sum_{\az\in\CG_{k-1}}
\lf<f,\psi_\az^{k-1}\r>\psi_\az^{k-1}
\end{equation}
in $(\go{\bz,\gz})'$, where $\eta$ is the same as in Definition \ref{def-eti}.
\end{theorem}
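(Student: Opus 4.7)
The plan is to verify separately the kernel conditions and the identity condition that make $\{Q_k\}_{k=0}^\fz$ an exp-IATI in the sense of Definition \ref{def-ieti}, and then to promote the resulting $L^2$-orthonormal wavelet expansion to the distributional identity \eqref{eq-iwave} by duality against test functions.

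For the kernel of $Q_0$, I observe that $Q_0$ coincides with the kernel $P_0$ of Proposition \ref{prop-ieti1} at level $k=0$; hence the size, H\"older, and second difference regularity estimates required by (ii)--(iv) of Definition \ref{def-eti} (without the $\CY^0$-distance factor, as permitted in the $k=0$ clause of Definition \ref{def-ieti}) follow immediately from parts (ii)--(iv) of Proposition \ref{prop-ieti1}, while the normalizations $\int_X Q_0(x,y)\,d\mu(y) = 1 = \int_X Q_0(y,x)\,d\mu(y)$ follow from part (v) combined with the symmetry in part (i). For each $k \in \nn$, note that $Q_k(x,y) = D_{k-1}(x,y)$ with $D_j(x,y) := \sum_{\az \in \CG_j} \psi_\az^j(x)\psi_\az^j(y)$; this is precisely the exp-ATI kernel already used in the proof of Theorem \ref{thm-wavebtl} (see \eqref{3.1x} and the discussion that follows it), so $Q_k$ satisfies conditions (ii)--(v) of Definition \ref{def-eti}. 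The scale indexing shift from $\dz^{k-1}$ to $\dz^k$ is cosmetic: since $\dz$ is a fixed constant in $(0,1)$, the change is absorbed into the implicit constants and the decay rate $\nu$.

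To verify $\sum_{k=0}^\fz Q_k = I$ in $L^2(X)$, I invoke the orthonormal basis property of $\{\phi_\az^0\}_{\az \in \CA_0} \cup \{\psi_\az^k:\ k \in \zz_+,\ \az \in \CG_k\}$ established in \cite{ah13}: for any $f \in L^2(X)$,
$$
f = \sum_{\az \in \CA_0}\langle f,\phi_\az^0\rangle\phi_\az^0 + \sum_{k=0}^\fz \sum_{\az \in \CG_k}\langle f,\psi_\az^k\rangle\psi_\az^k
$$
converges in $L^2(X)$, and rewriting the partial sums in terms of kernel actions yields $Q_0 f + \sum_{k=1}^\fz Q_k f = f$ in $L^2(X)$. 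This is the $L^2$-version of \eqref{eq-iwave} and completes the check that $\{Q_k\}_{k=0}^\fz$ is an exp-IATI.

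Finally, to extend \eqref{eq-iwave} to arbitrary $f \in (\go{\bz,\gz})'$, I would fix a test function $\vz \in \go{\bz,\gz}$, note that $\vz \in L^2(X) \cap \CG(\bz,\gz)$, and then show that its $L^2$-convergent wavelet expansion also converges in $\CG(\bz',\gz')$-norm for some $\bz' \in (0,\bz)$ and $\gz' \in (0,\gz)$, hence in the topology of $\CG_0^\eta(\bz,\gz) = \go{\bz,\gz}$. Pairing with $f$ and using the symmetry $Q_k(x,y)=Q_k(y,x)$ then gives \eqref{eq-iwave} in $(\go{\bz,\gz})'$. I expect the main obstacle to lie in this last step: controlling the tail of the wavelet expansion of $\vz$ in the test function norm requires combining the size and H\"older estimates of Lemma \ref{lem-wave} with the decay and regularity of $\vz$, by way of Lemmas \ref{lem-pin} and \ref{lem-bbes} and summation-by-scales arguments analogous to those used in the proof of the inhomogeneous Calder\'on reproducing formula of Lemma \ref{lem-idrf}.
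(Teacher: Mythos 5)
Your treatment of the kernel conditions and of the $L^2$ identity follows the paper's own route: $Q_0=P_0$ is handled by Proposition \ref{prop-ieti1}, the kernels for $k\in\nn$ are the wavelet exp-ATI kernels of \eqref{3.1x} shifted by one level, and the identity $\sum_{k=0}^\fz Q_kf=f$ in $L^2(X)$ is the orthogonal decomposition from \cite{ah13} (the paper packages it as the telescoping of projections, $P_k=P_{k-1}+Q_{k-1}$ together with $P_kf\to f$, which is the same fact). One caution: your remark that the index shift is ``cosmetic'' covers the scale $\dz^{k-1}$ versus $\dz^k$, but condition (ii) of Definition \ref{def-eti} also involves the reference set $\CY^k$, while the shifted kernel naturally decays in terms of $d(\cdot,\CY^{k-1})$; this calibration is exactly what the cited constructions (e.g., \cite[Remark 6.2]{hlyy19}, \cite[Lemma 3.6]{hlw18}) provide, so it should be invoked explicitly rather than absorbed into constants.

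The genuine gap is in your last step. You propose to show that the wavelet expansion of $\vz\in\go{\bz,\gz}$ converges in the $\CG(\bz',\gz')$ norm for some $\bz'\in(0,\bz)$ and $\gz'\in(0,\gz)$ and to conclude convergence ``in the topology of $\go{\bz,\gz}$''. This implication is backwards: since $\|\cdot\|_{\CG(\bz',\gz')}\ls\|\cdot\|_{\CG(\bz,\gz)}$ when $\bz'\le\bz$ and $\gz'\le\gz$, convergence in $\CG(\bz',\gz')$ is the weaker statement, whereas a general $f\in(\go{\bz,\gz})'$ is continuous only with respect to the $\CG(\bz,\gz)$ norm; so pairing $f$ with the partial sums of the expansion of $\vz$ does not pass to the limit under your hypothesis, and your argument would only yield \eqref{eq-iwave} for the smaller class $f\in(\go{\bz',\gz'})'$. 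What is actually needed, and what the argument of \cite[Theorem 3.6]{hlw18} invoked by the paper supplies, is convergence of the expansion of $\vz$ in the $\CG(\bz,\gz)$ norm itself. This is obtained by using that $\go{\bz,\gz}$ is, by definition, the closure of $\CG(\eta,\eta)$ in $\CG(\bz,\gz)$: for such smoother functions the expansion converges in the $\CG(\bz,\gz)$ norm because $\bz,\gz<\eta$ leaves room to lose regularity, and one then passes to a general $\vz\in\go{\bz,\gz}$ by density together with the uniform boundedness of the partial-sum operators on the test function space. Without this (or an equivalent device), the distributional statement in $(\go{\bz,\gz})'$ as claimed in the theorem is not reached.
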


\begin{proof}
For any $k\in\zz_+$, let $V_k$ be the same as in \cite[Theorem 5.1]{ah13}, $W_{k-1}$ the complement of $V_{k-1}$
in $V_{k}$, and $P_k$ the same as in Proposition \ref{prop-ieti1}. Fix
$k\in\zz_+$. Note that, by \cite[Theorem 6.1]{ah13}, $\{\phi_\az^k\}_{\az\in\CA_k}$ is an orthogonal basis of $V_k$,
which further implies that $P_k$ is an orthonormal projection onto $V_k$. Moreover, since
$\{\psi_\az^{k-1}\}_{\az\in\CG_{k-1}}$ is an orthogonal basis of $W_{k-1}$, it then follows that $Q_{k-1}$ is an
orthonormal projection onto $W_{k-1}$. Therefore, by the proof of \cite[Theorem 10.2]{ah13}, we know that,
for any $k\in\nn$, $P_k=P_{k-1}+Q_{k-1}$. For any $f\in L^2(X)$, since $\lim_{k\to\fz}P_kf=f$ in $L^2(X)$
(see \cite[Proposition 2.7]{hmy08}), it then follows that
$$
\sum_{k=0}^\fz Q_kf=P_0f+\sum_{k=1}^\fz(P_k-P_{k-1})f=f\quad\textup{in}\quad L^2(X).
$$
By this, Proposition \ref{prop-ieti1}, and \cite[Lemma 3.6]{hlw18}, we find that $\{Q_k\}_{k=0}^\fz$ is an
exp-IATI. Moreover, using an argument similar to that used in the proof of \cite[Theorem 3.6]{hlw18}, we obtain
\eqref{eq-iwave}. This finishes the proof of Theorem \ref{thm-eieti}.
\end{proof}

\subsection[Wavelet characterization of inhomogeneous Besov and Triebel--Lizorkin spaces]
{Wavelet characterization of inhomogeneous Besov and \\ Triebel--Lizorkin spaces}
\label{ss-iwave}

In this subsection, we establish the wavelet characterization of inhomogeneous Besov and
Triebel--Lizorkin spaces. First, we recall the notion of these inhomogeneous spaces.
To this end, for any dyadic cube $Q$ and any non-negative measurable function $f$ on $X$, let
\begin{equation}\label{eq-defmean}
m_Q(f):=\frac 1{\mu(Q)}\int_Q f(y)\,d\mu(y).
\end{equation}

\begin{definition}\label{def-ibtl}
Let $\{Q_k\}_{k=0}^\fz$ be an exp-IATI, and $s\in(-\eta,\eta)$ with $\eta$ as in Definition \ref{def-eti}.
Suppose that $p\in(p(s,\eta),\fz]$, $q\in(0,\fz]$, and $\bz$ and $\gz$ satisfy
\begin{equation*}
\bz\in\lf(\max\left\{0,-s+\omega_0\lf(\frac{1}{p}-1\r)_+\right\},\eta\r)
\quad\hbox{and}\quad
\gz\in\lf(\omega_0\lf(\frac{1}{p}-1\r)_+,\eta\r)
\end{equation*}
with $\omega_0$ as in \eqref{eq-updim}, and $N$ is the same as in Lemma \ref{lem-idrf}.
\begin{enumerate}
\item If $s\in(-(\bz\wedge\gz),\bz\wedge\gz)$, $p\in(p(s,\bz\wedge\gz),\fz]$, and $q\in(0,\fz]$, then
the \emph{inhomogeneous Besov space $B^s_{p,q}(X)$} is defined to be set of all $f\in(\go{\bz,\gz})'$ such that
\begin{align*}
\|f\|_{B^{s}_{p,q}(X)}:=&\,\lf\{\sum_{k=0}^N\sum_{\az\in\CA_k}\sum_{m=1}^{N(k,\az)}\mu\lf(Q_\az^{k,m}\r)
\lf[m_{Q_\az^{k,m}}(|Q_kf|)\r]^p\r\}^{1/p}+\lf[\sum_{k=N+1}^\fz\dz^{-ksq}\|Q_kf\|_{L^p(X)}^q\r]^{1/q}\\
<&\,\fz
\end{align*}
with the usual modifications made when $p=\fz$ or $q=\fz$.
\item If $s\in(-(\bz\wedge\gz),\bz\wedge\gz)$, $p\in(p(s,\bz\wedge\gz),\fz)$, and
$q\in(p(s,\bz\wedge\gz),\fz]$, then the \emph{inhomogeneous Triebel--Lizorkin space $F^s_{p,q}(X)$} is defined
to be the set of all $f\in(\go{\bz,\gz})'$ such that
\begin{align*}
\|f\|_{F^{s}_{p,q}(X)}&:=\lf\{\sum_{k=0}^N\sum_{\az\in\CA_k}\sum_{m=1}^{N(k,\az)}\mu\lf(Q_\az^{k,m}\r)
\lf[m_{Q_\az^{k,m}}(|Q_kf|)\r]^p\r\}^{1/p}+\lf\|\lf(\sum_{k=N+1}^\fz\dz^{-ksq}|Q_kf|^q\r)^{1/q}\r\|_{L^p(X)}\\
&<\fz
\end{align*}
with the usual modification made when $q=\fz$.
\end{enumerate}
\end{definition}

On $B^s_{p,q}(X)$ and $F^s_{p,q}(X)$, we have the following wavelet characterization.

\begin{theorem}\label{thm-w=q}
Let $\eta$ be the same as in Definition \ref{def-eti}.
\begin{enumerate}
\item If $s$, $p$, $q$, $\bz$, and $\gz$ are as in Definition \ref{def-ibtl}(i), then $f\in B^s_{p,q}(X)$
if and only if $f\in(\go{\bz,\gz})'$ and
\begin{align}\label{eq-defib}
\|f\|_{B^s_{p,q}(\mathrm w,X)}:=&\,\lf\{\sum_{\az\in\CA_0}\lf[\mu\lf(Q_\az^0\r)\r]^{1-p/2}\lf|\lf<f,\phi_\az^0\r>
\r|^p\r\}^{1/p}\noz\\
&\qquad+\lf\{\sum_{k\in\nn}\dz^{-ksq}\lf[\sum_{\az\in\CG_{k-1}}\lf[\mu\lf(Q_\az^{k}\r)\r]^{1-p/2}
\lf|\lf<f,\psi_\az^{k-1}\r>\r|^p\r]^{q/p}\r\}^{1/q}\noz\\
<&\,\fz
\end{align}
with the usual modifications made when $p=\fz$ or $q=\fz$. Moreover, there exists a constant $C\in[1,\fz)$,
independent of $f$, such that
$C^{-1}\|f\|_{B^s_{p,q}(\mathrm w,X)}\le\|f\|_{B^s_{p,q}(X)}\le C\|f\|_{B^s_{p,q}(\mathrm w,X)}$.

\item If $s$, $p$, $q$, $\bz$, and $\gz$ are as in Definition \ref{def-ibtl}(ii), then $f\in F^s_{p,q}(X)$ if
and only if $f\in(\go{\bz,\gz})'$ and
\begin{align}\label{eq-defitl}
\|f\|_{F^s_{p,q}(\mathrm w,X)}:=&\,\lf\{\sum_{\az\in\CA_0}\lf[\mu\lf(Q_\az^0\r)\r]^{1-p/2}\lf|\lf<f,\phi_\az^0\r>
\r|^p\r\}^{1/p}\noz\\
&\qquad+\lf\|\lf(\sum_{k\in\nn}\dz^{-ksq}\lf[\sum_{\az\in\CG_{k-1}}\lf|\lf<f,\psi_\az^{k-1}\r>
\widetilde{\mathbf 1}_{Q_\az^{k}}\r|\r]^q\r)^{1/q}\r\|_{L^p(X)}\noz\\
<&\,\fz
\end{align}
with the usual modification made when $q=\fz$. Moreover, there exists a constant $C\in[1,\fz)$, independent
of $f$, such that
$C^{-1}\|f\|_{F^s_{p,q}(\mathrm w,X)}\le\|f\|_{F^s_{p,q}(X)}\le C\|f\|_{F^s_{p,q}(\mathrm w,X)}$.
\end{enumerate}
\end{theorem}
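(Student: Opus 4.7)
\medskip

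\noindent\textit{Proposal for the proof of Theorem \ref{thm-w=q}.}
By symmetry, the plan is to focus on (ii); (i) follows by an analogous argument with the Triebel--Lizorkin apparatus replaced by its Besov counterpart. The key observation is that Theorem \ref{thm-eieti} furnishes an $\exp$-IATI $\{Q_k\}_{k=0}^\fz$ whose kernels are \emph{already assembled} from the inhomogeneous wavelet system:
$Q_0(x,y)=\sum_{\az\in\CA_0}\phi_\az^0(x)\phi_\az^0(y)$
and, for $k\in\nn$, $Q_k(x,y)=\sum_{\az\in\CG_{k-1}}\psi_\az^{k-1}(x)\psi_\az^{k-1}(y)$.
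Once we show that Definition \ref{def-ibtl} is independent of the choice of $\exp$-IATI (which is established in the inhomogeneous analogue of \cite{whhy20} and was already used implicitly in the excerpt), we can select this particular $\exp$-IATI; then, by $L^2$-orthonormality, $Q_0f=\sum_{\az\in\CA_0}\langle f,\phi_\az^0\rangle\phi_\az^0$ and $Q_kf=\sum_{\az\in\CG_{k-1}}\langle f,\psi_\az^{k-1}\rangle\psi_\az^{k-1}$ for $k\in\nn$.

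For the sufficiency direction, suppose $f\in(\go{\bz,\gz})'$ with $\|f\|_{F^s_{p,q}(\mathrm w,X)}<\fz$. Using the pointwise decay in Lemma \ref{lem-wave}, we bound $|Q_kf(x)|$ (for $k\in\nn$) by $\sum_{\az\in\CG_{k-1}}|\langle f,\psi_\az^{k-1}\rangle|[\mu(Q_\az^{k})]^{-1/2}\exp\{-\nu[d(x,y_\az^{k-1})/\dz^{k-1}]^a\}$, which majorises a sum controlled (via Lemma \ref{lem-btl}) by a Hardy--Littlewood maximal function applied to $\sum_{\az\in\CG_{k-1}}|\langle f,\psi_\az^{k-1}\rangle\wz{\mathbf 1}_{Q_\az^{k}}|^r$ for suitable $r\in(0,\min\{p,q\})$. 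An application of Lemma \ref{lem-fs} then absorbs the indices $k\ge N+1$. For the remaining $k\in\{0,\ldots,N\}$, the sum is finite, and a similar pointwise estimate together with the finite-range $\ell^p$-inequality controls $\sum_{\az,m}\mu(Q_\az^{k,m})[m_{Q_\az^{k,m}}(|Q_kf|)]^p$ by the corresponding low-frequency piece of \eqref{eq-defitl}, using that the kernel of $\phi_\az^0$ is concentrated near $z_\az^0$.

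For the necessity direction, we invoke the inhomogeneous Calder\'on reproducing formula \eqref{eq-idrf} of Lemma \ref{lem-idrf} to expand $f$, and then pair it against each $\phi_{\az_0}^0$ and $\psi_{\az_0}^{k_0-1}$. The resulting kernel estimates on $\langle\wz Q_k(\cdot,y_\az^{k,m}),\psi_{\az_0}^{k_0-1}\rangle$ and $\langle\wz Q_k(\cdot,y_\az^{k,m}),\phi_{\az_0}^{0}\rangle$ are obtained by the same ``smooth meets rough'' cancellation device used in the proof of Theorem \ref{thm-wavebtl}(ii): when $k\ge k_0-1$ we exploit the H\"older regularity of $\psi_{\az_0}^{k_0-1}$ (or $\phi_{\az_0}^0$) together with the cancellation of $\wz Q_k$; when $k<k_0-1$ we use instead the H\"older regularity of $\wz Q_k$ together with the cancellation of $\psi_{\az_0}^{k_0-1}$. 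These produce geometric decay factors of the form $\dz^{|k-k_0|(\bz\wedge\gz)'}$, after which Lemma \ref{lem-btl} followed by Lemma \ref{lem-fs} converts the resulting discrete convolution into $\|f\|_{F^s_{p,q}(X)}$; the summands indexed by $k\in\{0,\ldots,N\}$ on the right-hand side of \eqref{eq-idrf}, which involve the finite averages $Q_{\az,1}^{k,m}(f)$, give rise to the low-frequency piece of \eqref{eq-defitl}.

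The main obstacle will be the careful bookkeeping at the ``seam'' $k=N$, where on one side of \eqref{eq-idrf} $\wz Q_k$ has total mass $1$ (no cancellation) while on the other $\wz Q_k$ has mean zero. In particular, pairings $\langle\wz Q_k(\cdot,y_\az^{k,m}),\psi_{\az_0}^{k_0-1}\rangle$ with small $k$ cannot use cancellation of $\wz Q_k$; here one must instead rely on the cancellation of $\psi_{\az_0}^{k_0-1}$ paired against the \emph{H\"older regularity} of $\wz Q_k$ (which is available uniformly in $k\in\zz_+$ by Lemma \ref{lem-idrf}), and then use the fact that the low-frequency range $\{0,\ldots,N\}$ is \emph{finite} to avoid any summability issue that the absence of cancellation would otherwise create. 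This is analogous to the treatment of Hardy spaces in \cite{hhllyy19}, and no new genuinely geometric ingredient beyond what is already encoded in Lemma \ref{lem-idrf} should be needed.
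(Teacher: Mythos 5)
Your proposal follows essentially the same route as the paper's proof: both reduce to the wavelet-built exp-IATI of Theorem \ref{thm-eieti} via the independence of $B^s_{p,q}(X)$ and $F^s_{p,q}(X)$ from the choice of exp-IATI, prove one direction by writing $Q_kf$ explicitly as wavelet sums and using the decay plus the summation/maximal-function lemmas (Lemmas \ref{lem-bbes}, \ref{lem-btl}, \ref{lem-fs}), and prove the other via the inhomogeneous Calder\'on reproducing formula of Lemma \ref{lem-idrf} with almost-orthogonality estimates, handling the non-cancellative low-frequency terms $k\in\{0,\ldots,N\}$ by the finiteness of that range exactly as the paper does. The only cosmetic difference is that you detail (ii) and defer (i), while the paper details (i) and defers (ii).
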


\begin{proof}
Let $s$, $p$, and $q$ be as in this theorem. By \cite[Proposition 4.3]{whhy20}, we find that $B^s_{p,q}(X)$
and $F^s_{p,q}(X)$ are independent of the choice of exp-IATIs. Therefore, we may assume that
$\{Q_k\}_{k=0}^\fz$ in both \eqref{eq-defib} and \eqref{eq-defitl} are the same as in Theorem \ref{thm-eieti}.

We only prove (i). The proof of (ii) is similar to that of (i) with some slight modifications. We omit the
details here. We first suppose $f\in B^s_{p,q}(X)$. Then, by Definition \ref{def-ibtl}, we know that $f\in(\go{\bz,\gz})'$
with $\bz$ and $\gz$ as in
this theorem. Therefore, from Lemma \ref{lem-idrf} (with the same notion as therein),
it follows that there exist $\{\wz Q_k\}_{k=0}^\fz$ such that
\begin{align*}
f(\cdot)&=\sum_{\az\in\CA_0}\sum_{m=1}^{N(0,\az)}\int_{Q_\az^{0,m}}\wz Q_0(\cdot,y)\,d\mu(y)m_{Q_\az^{0,m}}
(Q_0f)\\
&\qquad+\sum_{k=1}^N\sum_{\az\in\CA_k}\sum_{m=1}^{N(k,\az)}\mu\lf(Q_\az^{k,m}\r)\wz Q_k\lf(\cdot,y_\az^{k,m}\r)
m_{Q_\az^{k,m}}(Q_kf)\\
&\qquad+\sum_{k=N+1}^\fz\sum_{\az\in\CA_k}\sum_{m=1}^{N(k,\az)}\mu\lf(Q_\az^{k,m}\r)
\wz Q_k\lf(\cdot,y_\az^{k,m}\r)Q_kf\lf(y_\az^{k,m}\r)
\end{align*}
in $(\go{\bz,\gz})'$. Now, we consider two cases on $k'\in\zz_+$.

{\it Case 1) $k'=0$.} In this case, we conclude that, for any $\az'\in\CA_0$,
\begin{align*}
\lf<f,\frac{\phi_{\az'}^0}{\sqrt{\mu(Q_{\az'}^0)}}\r>
&=\sum_{\az\in\CA_0}\sum_{m=1}^{N(0,\az)}\int_{Q_\az^{0,m}}
\wz Q_0^*\lf(\frac{\phi_{\az'}^0}{\sqrt{\mu(Q_{\az'}^0)}}\r)(y)\,d\mu(y)m_{Q_\az^{0,m}}(Q_0f)\\
&\qquad+\sum_{k=1}^N\sum_{\az\in\CA_k}\sum_{m=1}^{N(k,\az)}\mu\lf(Q_\az^{k,m}\r)
\wz Q_k^*\lf(\frac{\phi_{\az'}^0}{\sqrt{\mu(Q_{\az'}^0)}}\r)\lf(y_\az^{k,m}\r)m_{Q_\az^{k,m}}(Q_kf)\\
&\qquad+\sum_{k=N+1}^\fz\sum_{\az\in\CA_k}\sum_{m=1}^{N(k,\az)}\mu\lf(Q_\az^{k,m}\r)
\wz Q_k^*\lf(\frac{\phi_{\az'}^0}{\sqrt{\mu(Q_{\az'}^0)}}\r)\lf(y_\az^{k,m}\r)Q_kf\lf(y_\az^{k,m}\r)\\
&=:\RI_1+\mathrm{II}_1+\mathrm{III}_1,
\end{align*}
where, for any $k\in\nn$, $\az\in\CA_k$, and $m\in\{1,\ldots,N(k,\az)\}$, $y_\az^{k,m}$
is an arbitrary point of $Q_\az^{k,m}$.

For $\RI_1$, from \cite[Lemma 3.6]{hyy19} and \cite[Theorem 6.1]{ah13}, we deduce that, for any $\az\in\CA_0$,
$m\in\{1,\ldots,N(0,\az)\}$, and $y\in Q_\az^{0,m}$,
$$
\lf|\wz Q_0^*\lf(\frac{\phi_{\az'}^0}{\sqrt{\mu(Q_{\az'}^0)}}\r)(y)\r|
\ls P_\gz\lf(y_\az^{m,0},x_{\az'}^0;1\r).
$$
Thus, we conclude that
$$
|\RI_1|\ls\sum_{\az\in\CA_0}\sum_{m=1}^{N(0,\az)}\mu\lf(Q_{\az}^0\r)P_\gz\lf(y_\az^{0,m},x_{\az'}^0;1\r)
m_{Q_\az^{0,m}}(|Q_kf|).
$$

For $\mathrm{II}_1$, since $N$ is a fixed integer, from an argument similar to that used in the estimation of
$\RI_1$, it then follows that
$$
|\mathrm{II}_1|\ls\sum_{k=1}^N\sum_{\az\in\CA_k}\sum_{m=1}^{N(k,\az)}\mu\lf(Q_{\az}^{k,m}\r)
P_\gz\lf(y_\az^{k,m},x_{\az'}^0;1\r)m_{Q_\az^{k,m}}(|Q_kf|).
$$

For $\mathrm{III}_1$, by \cite[Lemma 3.6]{hyy19} and \cite[Theorem 6.1]{ah13},
we know that, for any fixed $\eta'\in(0,\bz\wedge\gz)$, and any $y_\az^{k,m}\in Q_\az^{k,m}$,
$$
\lf|\wz Q_k^*\lf(\frac{\phi_{\az'}^0}{\sqrt{\mu(Q_{\az'}^0)}}\r)\lf(y_\az^{k,m}\r)\r|
\ls\dz^{k\eta'}P_\gz\lf(y_\az^{0,m},x_{\az'}^0;1\r).
$$
By this, we conclude that
$$
|\mathrm{III}_1|\ls\sum_{k=N+1}^\fz\dz^{k\eta'}\sum_{\az\in\CA_k}\sum_{m=1}^{N(k,\az)}\mu\lf(Q_{\az}^{k,m}\r)
P_\gz\lf(y_\az^{k,m},x_{\az'}^0;1\r)\lf|Q_kf\lf(y_\az^{k,m}\r)\r|.
$$

From the estimates of $\RI_1$, $\mathrm{II}_1$, and $\mathrm{III}_1$, and Lemma \ref{lem-pin} when $p\le 1$, or
the H\"older inequality when $p>1$, we deduce that
\begin{align*}
&\sum_{\az'\in\CA_0}\mu\lf(Q_{\az'}^0\r)\lf|\lf<f,\frac{\phi_{\az'}^0}{\sqrt{\mu(Q_{\az'}^0)}}\r>\r|^p\\
&\quad\ls\sum_{\az'\in\CA_0}\mu\lf(Q_{\az'}^0\r)\sum_{k=0}^N\sum_{\az\in\CA_k}\sum_{m=1}^{N(k,\az)}
\lf[\mu\lf(Q_{\az}^k\r)\r]^{p}\lf[P\lf(y_\az^{k,m},x_{\az'}^0;1\r)\r]^p\lf[m_{Q_\az^{0,m}}(|Q_kf|)\r]^p\\
&\qquad\quad+\sum_{\az'\in\CA_0}\mu\lf(Q_{\az'}^0\r)\sum_{k=N+1}^\fz\dz^{kp\eta'}\sum_{\az\in\CA_k}
\sum_{m=1}^{N(k,\az)}\lf[\mu\lf(Q_{\az}^{k,m}\r)\r]^p\lf[P_\gz\lf(y_\az^{k,m},x_{\az'}^0;1\r)\r]^p
\lf|Q_kf\lf(y_\az^{k,m}\r)\r|^p\\
&\quad=:\RJ_1+\RJ_2,
\end{align*}

We next estimate $\RJ_1$. When $p\le 1$, due to
$$
p>p(s,\bz\wedge\gz)>\frac{\omega_0}{\omega_0+(\bz\wedge\gz)}>\frac{\omega_0}{\omega_0+\gz},
$$
by Lemma \ref{lem-bbes}, we conclude that
\begin{equation*}
\RJ_1\ls\sum_{k=0}^N\sum_{m=1}^{N(k,\az)}\lf[\mu\lf(Q_\az^{k,m}\r)\r]^p\lf[m_{Q_\az^{k,m}}(|Q_kf|)\r]^p
\lf[V_{1}\lf(y_\az^{k,m}\r)\r]^{1-p}\ls
\sum_{k=0}^N\sum_{m=1}^{N(k,\az)}\mu\lf(Q_\az^{k,m}\r)\lf[m_{Q_\az^{k,m}}(|Q_kf|)\r]^p.
\end{equation*}
When $p\in(1,\fz]$, the above estimate also holds true directly by \cite[Lemma 2.4(ii)]{hlyy19}; we omit the
details here. This is the desired estimate.

Now, we estimate $\RJ_2$. Using an argument similar to that used in the estimation of $\RJ_1$,
Lemma \ref{lem-bbes} when $q\le p$, or the H\"{o}lder inequality when $q>p$, and $\eta'>-s$, we find that
\begin{align*}
\RJ_2&\ls\sum_{k=N+1}^\fz\dz^{k\eta'p}\sum_{\az\in\CA_k}\sum_{m=1}^{N(k,\az)}\mu\lf(Q_\az^{k,m}\r)
\lf|Q_kf\lf(y_\az^{k,m}\r)\r|^p\\
&\ls\lf\{\sum_{k=N+1}^\fz\dz^{-ksq}\lf[\sum_{\az\in\CA_k}\sum_{m=1}^{N(k,\az)}\mu\lf(Q_\az^{k,m}\r)
\lf|Q_kf\lf(y_\az^{k,m}\r)\r|^p\r]^{q/p}\r\}^{p/q}.
\end{align*}
This is the desired estimate.

Combining the estimates of $\RJ_1$ and $\RJ_2$ with the arbitrariness of $y_\az^{k,m}$, we obtain
\begin{align}\label{eq-k'=0}
&\lf\{\sum_{\az'\in\CA_0}\lf[\mu\lf(Q_{\az'}^0\r)\r]^{1-p/2}\lf|\lf<f,\phi_{\az'}^0\r>\r|^p\r\}^{1/p}\noz\\
&\quad\ls\lf\{\sum_{k=0}^N\sum_{m=1}^{N(k,\az)}\mu\lf(Q_\az^{k,m}\r)
\lf[m_{Q_\az^{k,m}}(|Q_kf|)\r]^p\r\}^{1/p}\noz\\
&\quad\qquad+\lf\{\sum_{k=N+1}^\fz\dz^{-ksq}\lf[\sum_{\az\in\CA_k}\sum_{m=1}^{N(k,\az)}\mu\lf(Q_\az^{k,m}\r)
\inf_{z\in Q_\az^{k,m}}\lf|Q_kf(z)\r|^p\r]^{q/p}\r\}^{1/q}\noz\\
&\quad\sim\|f\|_{B^s_{p,q}(X)},
\end{align}
which is the desired estimate in this case.

{\it Case 2) $k'\in\nn$.} In this case, by Lemma \ref{lem-idrf}, we conclude that,
for any $\az'\in\CG_{k'-1}$,
\begin{align*}
\lf<f,\frac{\psi_{\az'}^{k'-1}}{\sqrt{\mu(Q_{\az'}^{k'})}}\r>
&=\sum_{\az\in\CA_0}\sum_{m=1}^{N(0,\az)}\int_{Q_\az^{0,m}}
\wz Q_0^*\lf(\frac{\psi_{\az'}^{k'-1}}{\sqrt{\mu(Q_{\az'}^0)}}\r)(y)\,d\mu(y)m_{Q_\az^{0,m}}(Q_0f)\\
&\qquad+\sum_{k=1}^N\sum_{\az\in\CA_k}\sum_{m=1}^{N(k,\az)}\mu\lf(Q_\az^{k,m}\r)
\wz Q_k^*\lf(\frac{\psi_{\az'}^{k'-1}}{\sqrt{\mu(Q_{\az'}^{k'})}}\r)\lf(y_\az^{k,m}\r)m_{Q_\az^{k,m}}(Q_kf)\\
&\qquad+\sum_{k=N+1}^\fz\sum_{\az\in\CA_k}\sum_{m=1}^{N(k,\az)}\mu\lf(Q_\az^{k,m}\r)
\wz Q_k^*\lf(\frac{\psi_{\az'}^{k'-1}}{\sqrt{\mu(Q_{\az'}^{k'})}}\r)\lf(y_\az^{k,m}\r)Q_kf\lf(y_\az^{k,m}\r)\\
&=:\RI_2+\mathrm{II}_2+\mathrm{III}_2,
\end{align*}
where, for any $k\in\nn$, $\az\in\CA_k$, and $m\in\{1,\ldots,N(k,\az)\}$, $y_\az^{k,m}$ is
an arbitrary point of $Q_\az^{k,m}$.

For $\RI_2$ and $\mathrm{II}_2$, by \cite[Lemma 3.6]{hyy19} and Lemma \ref{lem-wave}, we
find that, for any $k\in\{0,\ldots,N\}$, $\az\in\CA_k$, $m\in\{1,\ldots,N(k,\az)\}$, and $y\in Q_\az^{k,m}$,
\begin{equation*}
\lf|\wz Q_k^*\lf(\frac{\psi_{\az'}^{k'-1}}{\sqrt{\mu(Q_{\az'}^{k'})}}\r)(y)\r|
\ls\dz^{|k'-k|\eta'}P_\gz\lf(y,y_{\az'}^{k'-1};\dz^k\r)\ls\dz^{k'\eta'}P_\gz\lf(y_\az^{k,m},y_{\az'}^{k'-1};\dz^k\r),
\end{equation*}
where $\eta'\in(0,\bz\wedge\gz)$ is a fixed number. Therefore, we conclude that
\begin{equation}\label{eq-i+ii}
|\RI_2+\mathrm{II}_2|\ls\dz^{k'\eta'}\sum_{k=0}^N\sum_{\az\in\CA_k}\sum_{m=1}^{N(k,\az)}\mu\lf(Q_{\az}^{k,m}\r)
P_\gz\lf(y_\az^{k,m},y_{\az'}^{k'-1};\dz^k\r)m_{Q_\az^{k,m}}(|Q_kf|).
\end{equation}
This is the desired estimate.

To estimate $\mathrm{III}_2$, by \cite[Lemma 3.6]{hyy19}, we have, for any $k\in\{N+1,N+2,\ldots\}$, $\alpha\in\mathcal{A}_k$, and
$y_\az^{k,m}\in Q_\az^{k,m}$,
\begin{equation*}
\lf|\wz Q_k^*\lf(\frac{\psi_{\az'}^{k'-1}}{\sqrt{\mu(Q_{\az'}^{k'})}}\r)\lf(y_\az^{k,m}\r)\r|
\ls\dz^{|k-k'|\eta'}P_\gz\lf(y_\az^{k,m},y_{\az'}^{k'-1};\dz^{k\wedge k'}\r).
\end{equation*}
Then we conclude that
\begin{equation}\label{eq-iii2}
|\mathrm{III}_2|\ls\sum_{k=N+1}^\fz\dz^{|k-k'|\eta'}\sum_{\az\in\CA_k}\sum_{m=1}^{N(k,\az)}
\mu\lf(Q_{\az}^{k,m}\r)P_\gz\lf(y_\az^{k,m},y_{\az'}^{k'-1};\dz^{k\wedge k'}\r)\lf|Q_kf\lf(y_\az^{k,m}\r)\r|,
\end{equation}
which is also the desired estimate.

To show $\|f\|_{B^s_{p,q}(\mathrm w,X)}\ls\|f\|_{B^s_{p,q}(X)}$,
we first consider the case $p\in(p(s,\bz\wedge\gz),1]$.
Indeed, in this case, by \eqref{eq-i+ii}, \eqref{eq-iii2},
and Lemma \ref{lem-pin},
we find that, for any $k'\in\nn$,
\begin{align*}
&\sum_{\az'\in\CG_{k'-1}}\mu\lf(Q_{\az'}^{k'}\r)\lf|\lf<f,\frac{\psi_{\az'}^{k'-1}}
{\sqrt{\mu(Q_{\az'}^{k'})}}\r>\r|^p\\
&\quad\ls\sum_{\az'\in\CG_{k'-1}}\dz^{k'p\eta'}\mu\lf(Q_{\az'}^{k'}\r)\sum_{k=0}^N\sum_{\az\in\CA_k}
\sum_{m=1}^{N(k,\az)}\lf[\mu\lf(Q_{\az}^{k,m}\r)\r]^p\lf[P_\gz\lf(y_\az^{k,m},y_{\az'}^{k'-1};\dz^k\r)\r]^p
\lf[m_{Q_\az^{k,m}}(|Q_kf|)\r]^p\\
&\qquad+\sum_{\az'\in\CG_{k'-1}}\mu\lf(Q_{\az'}^{k'}\r)
\sum_{k=N+1}^\fz\dz^{|k-k'|p\eta'}\sum_{\az\in\CA_k}\sum_{m=1}^{N(k,\az)}
\lf[\mu\lf(Q_{\az}^{k,m}\r)\r]^p\\
&\qquad\times\lf[P_\gz\lf(y_\az^{k,m},y_{\az'}^{k'-1};\dz^{k\wedge k'}\r)\r]^p
\lf|Q_kf\lf(y_\az^{k,m}\r)\r|^p\\
&\quad=:\RJ_3+\RJ_4.
\end{align*}

We next estimate $\RJ_3$. Notice that, for any $k'\in\nn$ and $k\in\{0,\ldots,N\}$, we have
$\dz^k\sim\dz^{k\wedge k'}$. By this, Lemma \ref{lem-bbes}, and \eqref{eq-doub}, we conclude that, for any $k'\in\nn$,
\begin{equation*}
\RJ_3\ls\dz^{k'p\eta'}\sum_{k=0}^N\sum_{\az\in\CA_k}\sum_{m=1}^{N(k,\az)}
\mu\lf(Q_\az^{k,m}\r)\lf[m_{Q_\az^{k,m}}(|Q_kf|)\r]^p.
\end{equation*}

Now, for $\RJ_4$, by Lemma \ref{lem-bbes} again, we find that
\begin{equation*}
\RJ_4\ls\sum_{k=N+1}^\fz \dz^{|k-k'|\{p\eta'-\omega(1-p)[(k-k\wedge k')]\}}\sum_{\az\in\CA_k}
\sum_{m=1}^{N(k,\az)}\mu\lf(Q_{\az}^{k,m}\r)\lf|Q_kf\lf(y_\az^{k,m}\r)\r|^p,
\end{equation*}
where $\omega\in[\omega_0,\fz)$ satisfy \eqref{eq-doub} and all the assumptions of this theorem with
$\omega_0$ replaced by $\omega$.

Therefore, using the estimates of $\RJ_3$ and $\RJ_4$, and choosing $\eta'>-s$, we obtain
\begin{align}\label{eq-k'>0a}
&\sum_{k'\in\nn}\dz^{-k'sq}\lf\{\sum_{\az'\in\CG_{k'-1}}\lf[\mu\lf(Q_{\az'}^{k'}\r)\r]^{1-p/2}
\lf|\lf<f,\psi_{\az'}^{k'-1}\r>\r|^p\r\}^{q/p}\noz\\
&\quad\ls\lf\{\sum_{k=0}^N\sum_{\az\in\CA_k}\sum_{m=1}^{N(k,\az)}
\mu\lf(Q_\az^{k,m}\r)\lf[m_{Q_\az^{k,m}}(|Q_kf|)\r]^p\r\}^{q/p}\noz\\
&\quad\qquad+\sum_{k'\in\nn}\lf[\sum_{k=N+1}^\fz\dz^{-ksp}\dz^{(k-k')sp}
\dz^{|k-k'|\{p\eta'-\omega(1-p)[k-(k\wedge k')]\}}\r.\noz\\
&\quad\qquad\lf.\times\sum_{\az\in\CA_k}\sum_{m=1}^{N(k,\az)}\mu\lf(Q_{\az}^{k,m}\r)
\lf|Q_kf\lf(y_\az^{k,m}\r)\r|^p\r]^{q/p}\noz\\
&\quad=:\lf\{\sum_{k=0}^N\sum_{\az\in\CA_k}\sum_{m=1}^{N(k,\az)}
\mu\lf(Q_\az^{k,m}\r)\lf[m_{Q_\az^{k,m}}(|Q_kf|)\r]^p\r\}^{q/p}+\mathrm R.
\end{align}

Next, we estimate $\mathrm R$. Indeed, when $k'\ge k$, we conclude that
\begin{align*}
(k-k')sp+|k-k'|\{p\eta'-\omega(1-p)[k-(k\wedge k')]\}=(k'-k)p(\eta'-s),
\end{align*}
while, when $k'<k$, we obtain
\begin{align*}
(k-k')sp+|k-k'|\{p\eta'-\omega(1-p)[k-(k\wedge k')]\}=(k'-k)p\lf[\eta'+s-\frac{\omega(1-p)}p\r].
\end{align*}
Due to $p>p(s,\bz\wedge\gz)$, choosing $\eta'\in(0,\bz\wedge\gz)$ such that $\eta'>|s|$ and $p>\omega/(\omega+\eta'+s)$,
and using an argument similar to that used in the estimation of \eqref{eq-sum}, we find that
$$
\mathrm R\ls\sum_{k=N+1}^\fz\dz^{-ksq}\lf\{\sum_{\az\in\CA_k}\sum_{m=1}^{N(k,\az)}
\mu\lf(Q_\az^{k,m}\r)\lf|Q_kf\lf(y_\az^{k,m}\r)\r|^p\r\}^q.
$$
By this, \eqref{eq-k'>0a}, and \eqref{eq-k'=0}, we obtain
\begin{align*}
&\lf\{\sum_{\az'\in\CA_0}\lf[\mu\lf(Q_{\az'}^0\r)\r]^{1-p/2}\lf|\lf<f,\phi_{\az'}^0\r>\r|^p\r\}^{1/p}
+\lf[\sum_{k'\in\nn}\dz^{-k'sq}\lf\{\sum_{\az'\in\CG_{k'-1}}\lf[\mu\lf(Q_{\az'}^{k'}\r)\r]^{1-p/2}
\lf|\lf<f,\psi_{\az'}^{k'-1}\r>\r|^p\r\}^{q/p}\r]^{1/q}\\
&\quad\ls\|f\|_{B^s_{p,q}(X)},
\end{align*}
which further implies that $\|f\|_{B^s_{p,q}(\mathrm w,X)}\ls\|f\|_{B^s_{p,q}(X)}$.
This finishes the proof of the case $p\in(p(s,\bz\wedge\gz),1]$.

Now, we consider the case $p\in(1,\fz]$.
In this case, by \eqref{eq-i+ii} and \eqref{eq-iii2}, we observe that
\begin{align*}
&\sum_{\az'\in\CG_{k'-1}}\mu\lf(Q_{\az'}^{k'}\r)\lf|\lf<f,\frac{\psi_{\az'}^{k'-1}}
{\sqrt{\mu(Q_{\az'}^{k'})}}\r>\r|^p\\
&\quad\ls\sum_{\az'\in\CG_{k'-1}}\dz^{k'p\eta'}\mu\lf(Q_{\az'}^{k'}\r)\sum_{k=0}^N\lf\{\sum_{\az\in\CA_k}
\sum_{m=1}^{N(k,\az)}\mu\lf(Q_{\az}^{k,m}\r)P_\gz\lf(y_\az^{k,m},y_{\az'}^{k'-1};\dz^k\r)m_{Q_\az^{k,m}}(|Q_kf|)\r\}^p\\
&\quad\qquad+\sum_{\az'\in\CG_{k'-1}}\mu\lf(Q_{\az'}^{k'}\r)
\lf\{\sum_{k=N+1}^\fz\dz^{|k-k'|\eta'}\sum_{\az\in\CA_k}\sum_{m=1}^{N(k,\az)}\mu\lf(Q_{\az}^{k,m}\r)
P_\gz\lf(y_\az^{k,m},y_{\az'}^{k'-1};\dz^{k\wedge k'}\r)\lf|Q_kf\lf(y_\az^{k,m}\r)\r|\r\}^p\\
&\quad=:\RJ_5+\RJ_6.
\end{align*}

We first estimate $\RJ_5$. From the H\"older inequality and the Tonelli theorem, we  deduce that
\begin{equation*}
\RJ_5\ls\dz^{k'p\eta'}\sum_{k=0}^N\sum_{\az\in\CA_k}\sum_{m=1}^{N(k,\az)}\mu\lf(Q_{\az}^{k,m}\r)
\lf[m_{Q_\az^{k,m}}(|Q_kf|)\r]^p,
\end{equation*}
which is the desired estimate.

We then estimate $\RJ_6$. By the H\"older inequality and the Tonelli theorem, we find that
\begin{align*}
\RJ_6\ls\sum_{k=N+1}^\fz\dz^{|k-k'|p\eta'}\sum_{\az\in\CA_k}\sum_{m=1}^{N(k,\az)}\mu\lf(Q_{\az}^{k,m}\r)
\lf|Q_kf\lf(y_\az^{k,m}\r)\r|^p.
\end{align*}

By the estimates of $\RJ_5$ and $\RJ_6$, using Lemma \ref{lem-pin} when $q\le p$, or the H\"{o}lder inequality
when $q>p$, and choosing $\eta'\in(0,\bz\wedge\gz)$ such that $\eta'>|s|$, we conclude that
\begin{align*}
&\sum_{k'\in\nn}\dz^{-k'sq}\lf\{\sum_{\az'\in\CG_{k'-1}}\lf[\mu\lf(Q_{\az'}^{k'}\r)\r]^{1-p/2}
\lf|\lf<f,\psi_{\az'}^{k'-1}\r>\r|^p\r\}^{q/p}\\
&\quad\ls\lf\{\sum_{k=0}^N\sum_{\az\in\CA_k}\sum_{m=1}^{N(k,\az)}
\mu\lf(Q_\az^{k,m}\r)\lf[m_{Q_\az^{k,m}}(|Q_kf|)\r]^p\r\}^{q/p}\\
&\qquad\quad+\sum_{k=N+1}^\fz\dz^{-ksq}\lf\{\sum_{\az\in\CA_k}\sum_{m=1}^{N(k,\az)}\mu\lf(Q_\az^{k,m}\r)
\lf|Q_kf\lf(y_\az^{k,m}\r)\r|^p\r\}^{q/p}\sum_{k'\in\nn}\dz^{(k-k')sq}\dz^{|k-k'|q\eta'}\\
&\quad \ls\|f\|_{B^s_{p,q}(X)}^q.
\end{align*}
Combining the above arguments, we obtain
$\|f\|_{B^s_{p,q}(\mathrm w,X)}\ls\|f\|_{B^s_{p,q}(X)}$. This finishes the proof of the sufficiency
of (i).

Next, we prove the necessity of (i). To this end, assume $f\in(\go{\bz,\gz})'$
and $\|f\|_{B^s_{p,q}(\mathrm{w},X)}<\fz$.
By Theorem \ref{thm-eieti}, \eqref{eq-iwave}, and the orthogonality of $\{\phi_\az^0\}_{\az\in\CA_0}\cup
\{\psi_\az^k\}_{k\in\zz_+,\ \az\in\CG_k}$ (see, for instance, \cite[(5.15)]{hyy19}), we find that, for any $k\in\zz_+$ and
$z\in X$,
\begin{align*}
Q_kf(z)&=\sum_{\az'\in\CA_0}\lf<f,\phi_{\az'}^0\r>\lf<Q_k(z,\cdot),\phi_{\az'}^0\r>
+\sum_{k'\in\nn}\sum_{\az'\in\CG_{k'-1}}\lf<f,\psi_{\az'}^{k'-1}\r>
\lf<Q_k(x,\cdot),\psi_{\az'}^{k'-1}\r>\\
&=\begin{cases}
\displaystyle \sum_{\az\in\CA_0}\lf<f,\phi_\az^0\r>\phi_\az^0(z) & \textup{if } k=0, \\
\displaystyle \sum_{\az\in\CG_{k-1}}\lf<f,\psi_\az^{k-1}\r>\psi_\az^{k-1}(z) & \textup{if } k\in\nn.
\end{cases}
\end{align*}
Now, we consider the following three cases on $k\in\zz_+$.

{\it Case 1) $k=0$.} In this case, by \cite[Theorem 6.1]{ah13}, we conclude that, for any $\az\in\CA_0$,
$m\in\{1,\ldots,N(0,\az)\}$, and $z\in X$,
\begin{align*}
|Q_0f(z)|&\ls\sum_{\az'\in\CA_0}\mu\lf(Q_{\az'}^0\r)\lf|\lf<f,\frac{\phi_{\az'}^0}{\sqrt{\mu(Q_{\az'}^0)}}\r>\r|
\frac 1{V_1(x_{\az'}^0)+V(x_{\az'}^0,z_\az^{0,m})}\exp\lf\{-\nu'\lf[d\lf(z_\az^{0,m},x_{\az'}^0\r)\r]^a\r\},
\end{align*}
which further implies that
\begin{align*}
\lf|m_{Q_\az^{0,m}}(|Q_kf|)\r|
\ls\sum_{\az'\in\CA_0}\mu\lf(Q_{\az'}^0\r)\lf|\lf<f,\frac{\phi_{\az'}^0}{\sqrt{\mu(Q_{\az}^0)}}\r>\r|
\frac 1{V_1(x_{\az'}^0)+V(x_{\az'}^0,z_\az^{0,m})}\exp\lf\{-\nu'\lf[d\lf(z_\az^{0,m},x_{\az'}^0\r)\r]^a\r\},
\end{align*}
where, for any $\az\in\CA_0$ and $m\in\{1,\ldots,N(k,\az)\}$, $z_\az^{0,m}$ denotes the ``center'' of $Q_\az^{0,m}$,
and $\nu'\in(0,\nu)$ is independent of $\az$, $m$, $k$, and $f$.
By this and Lemma \ref{lem-pin} when $p\in(0,1]$, or the H\"older inequality when $p\in(1,\fz]$, we find that
\begin{equation}\label{eq-bw1}
\sum_{\az\in\CA_0}\sum_{m=1}^{N(0,\az)}\mu\lf(Q_\az^{k,m}\r)\lf[m_{Q_\az^{0,m}}(|Q_kf|)\r]^p
\ls\sum_{\az'\in\CA_0}\lf[\mu\lf(Q_{\az'}^0\r)\r]^{1-p/2}\lf|\lf<f,\phi_{\az'}^0\r>\r|^p,
\end{equation}
which is the desired estimate.

{\it Case 2) $k\in\{1,\ldots,N\}$}. In this case, using an argument similar to that used in Case 1), we obtain
\begin{equation}\label{eq-bw2}
\sum_{k=1}^N\sum_{\az\in\CA_k}\sum_{m=1}^{N(k,\az)}\mu\lf(Q_\az^{k,m}\r)\lf[m_{Q_\az^{k,m}}(|Q_kf|)\r]^p
\ls\sum_{k=1}^N\sum_{\az'\in\CG_{k-1}}\lf[\mu\lf(Q_{\az'}^k\r)\r]^{1-p/2}
\lf|\lf<f,\psi_{\az'}^{k-1}\r>\r|^p,
\end{equation}
which is also the desired estimate.

{\it Case 3) $k\in\{N+1,N+2,\ldots\}$.} In this case, by the size condition of $\psi_{\az'}^k$
(see, for instance, Lemma \ref{lem-idrf}), we find that, for any $\az\in\CA_k$,
$m\in\{1,\ldots,N(k,\az)\}$, and $z\in Q_\az^{k,m}$,
\begin{equation*}
|Q_kf(z)|\ls\sum_{\az'\in\CG_{k-1}}\mu\lf(Q_{\az'}^k\r)
\lf|\lf<f,\frac{\psi_{\az'}^{k-1}}{\sqrt{\mu(Q_{\az'}^k)}}\r>\r|
\frac 1{V_{\dz^k}(y_{\az'}^{k-1})+V(y_{\az'}^{k-1},z_\az^{k,m})}
\exp\lf\{-\nu'\lf[\frac{d(z_\az^{k,m},y_{\az'}^{k-1})}{\dz^k}\r]^a\r\}.
\end{equation*}
By the arbitrariness of $z$, we further conclude that, for any $\az\in\CA_k$ and $m\in\{1,\ldots,N(k,\az)\}$,
\begin{align*}
&\sup_{z\in Q_\az^{k,m}}|Q_kf(z)|\\
&\quad\ls\sum_{\az'\in\CG_{k-1}}\mu\lf(Q_{\az'}^k\r)
\lf|\lf<f,\frac{\psi_{\az'}^{k-1}}{\sqrt{\mu(Q_{\az'}^k)}}\r>\r|
\frac 1{V_{\dz^k}(y_{\az'}^{k-1})+V(y_{\az'}^{k-1},z_\az^{k,m})}
\exp\lf\{-\nu'\lf[\frac{d(z_\az^{k,m},y_{\az'}^{k-1})}{\dz^k}\r]^a\r\},
\end{align*}
where, for any $\az\in\CA_k$ and $m\in\{1,\ldots,N(k,\az)\}$, $z_\az^{k,m}$ denotes the ``center'' of $Q_\az^{k,m}$.
From this and Lemmas \ref{lem-pin} and \ref{lem-bbes} when $p\in(0,1]$, or the H\"older inequality when $p\in(1,\fz]$,
we deduce that
\begin{align}\label{eq-bw3}
\|Q_kf\|_{L^p(X)}^p&\ls\sum_{\az\in\CA_k}\sum_{m=1}^{N(k,\az)}\mu\lf(Q_\az^{k,m}\r)\sum_{\az'\in\CG_{k-1}}
\lf[\mu\lf(Q_{\az'}^0\r)\r]^p\lf|\lf<f,\frac{\psi_{\az'}^{k-1}}{\sqrt{\mu(Q_{\az'}^k)}}\r>\r|^p\noz\\
&\qquad\times\lf[\frac 1{V_{\dz^k}(y_{\az'}^{k-1})+V(y_{\az'}^{k-1},z_\az^{k,m})}\r]^p
\exp\lf\{-p\nu'\lf[\frac{d(z_\az^{k,m},y_{\az'}^{k-1})}{\dz^k}\r]^a\r\}\noz\\
&\ls\sum_{\az'\in\CG_{k-1}}\lf[\mu\lf(Q_{\az'}^k\r)\r]^{1-p/2}\lf|\lf<f,\psi_{\az'}^k\r>\r|^p,
\end{align}
which is the desired estimate.

Combining \eqref{eq-bw1}, \eqref{eq-bw2}, and \eqref{eq-bw3}, we conclude that
\begin{align*}
\|f\|_{B^s_{p,q}(X)}&\ls\lf\{\sum_{\az'\in\CA_0}\lf[\mu\lf(Q_{\az'}^0\r)\r]^{1-p/2}
\lf|\lf<f,\phi_{\az'}^0\r>\r|^p\r\}^{1/p}\\
&\qquad+\lf[\sum_{k=1}^N\dz^{-ksq}\lf\{\sum_{\az'\in\CG_{k-1}}
\lf[\mu\lf(Q_{\az'}^k\r)\r]^{1-p/2}\lf|\lf<f,\psi_{\az'}^{k-1}\r>\r|^p\r\}^{q/p}\r]^{1/q}
+\lf[\sum_{k=N+1}^\fz\cdots\vphantom{\dz^{-ksq}\lf\{\sum_{\az'\in\CG_{k-1}}
\lf[\mu\lf(Q_{\az'}^k\r)\r]^{1-p/2}\lf|\lf<f,\psi_{\az'}^{k-1}\r>\r|^p\r\}^{q/p}}\r]^{1/q}\\
&\sim\|f\|_{B^s_{p,q}(\mathrm w,X)}.
\end{align*}
This finishes the proof of the necessity of (i), and hence of Theorem \ref{thm-w=q}.
\end{proof}

\subsection{Almost diagonal operators, and molecular and Littlewood--Paley characterizations of
inhomogeneous Besov and Triebel--Lizorkin spaces}

In this subsection, we state some results similar to those of homogeneous Besov and Triebel--Lizorkin spaces
in Sections \ref{s-ado}, \ref{s-mol} and \ref{s-lp}. Since their proofs are similar, respectively, to those
of the homogeneous case, we only list the corresponding conclusions here and
omit the details. We first introduce the inhomogeneous Besov and Triebel--Lizorkin sequence spaces. For any
$k\in\zz_+$, let
$$
\CH_k:=\begin{cases}
\CA_0 & \hbox{if $k=0$,} \\
\CG_{k-1}:=\CA_{k}\setminus\CA_{k-1} & \hbox{if $k\in\nn$},
\end{cases}
$$
and $\wz\CD_0:=\{Q_\az^k:\ k\in\zz_+,\ \az\in\CH_k\}$.

\begin{definition}
Let $s\in\rr$ and $q\in(0,\fz]$.
\begin{enumerate}
\item If $p\in(0,\fz]$, then the \emph{inhomogeneous  Besov sequence space $b^s_{p,q}$} is defined to be
set of all $\lz:=\{\lz_Q\}_{Q\in\wz\CD_0}=:\{\lz_\az^k\}_{k\in\zz_+,\ \az\in\CH_k}\subset\cc$ such that
\begin{align*}
\|\lz\|_{b^s_{p,q}}:=&\,\lf\{\sum_{\az\in\CA_0}\lf[\mu\lf(Q_\az^0\r)\r]^{1-p/2}\lf|\lz_\az^0\r|^p\r\}^{1/p}
+\lf[\sum_{k\in\nn}\dz^{-ksq}\lf\{\sum_{\az\in\CG_{k-1}}\lf[\mu\lf(Q_\az^{k}\r)\r]^{1-p/2}
\lf|\lz_\az^k\r|^p\r\}^{q/p}\r]^{1/q}\\
<&\,\fz
\end{align*}
with usual modifications made when $p=\fz$ or $q=\fz$.

\item If $p\in(0,\fz)$, then the \emph{inhomogeneous  Triebel--Lizorkin sequence space
$f^s_{p,q}$} is defined to be the set of all $\lz:=\{\lz_Q\}_{Q\in\wz\CD_0}
=:\{\lz_\az^k\}_{k\in\zz_+,\ \az\in\CH_k}\subset\cc$ such that
$$
\|\lz\|_{f^s_{p,q}}:=\lf\{\sum_{\az\in\CA_0}\lf[\mu\lf(Q_\az^0\r)\r]^{1-p/2}\lf|\lz_\az^0\r|^p\r\}^{1/p}
+\lf\|\lf(\sum_{k\in\nn}\dz^{-ksq}\lf|\lz_\az^k\widetilde{\mathbf 1}_{Q_\az^{k}}\r|^q\r)^{1/q}\r\|_{L^p(X)}
<\fz
$$
with usual modification made when $q=\fz$.
\end{enumerate}
\end{definition}

Now, we introduce the notion of inhomogeneous almost diagonal operators. Similarly, for any dyadic cube $Q$,
we denote by $x_Q$ the ``center'' of $Q$ and by $\ell(Q)$ the ``side-length'' of $Q$.
Let $A:=\{A_{Q,P}\}_{Q,\ P\in\wD_0}\subset\cc$. For any sequence $\lz:=\{\lz_P\}_{P\in\wD_0}$, define
$A\lz:=\{(A\lz)_Q\}_{Q\in\wD_0}$ by setting, for any $Q\in\wD_0$,
$$
(A\lz)_Q:=\sum_{P\in\wD_0} A_{Q,P}\lz_P
$$
if, for any $Q\in\wD_0$, the above summation converges.

\begin{definition}\label{def-iado}
Let $A:=\{A_{Q,P}\}_{Q,\ P\in\wD_0}\subset\cc$ and $\omega_0$ be as in \eqref{eq-updim}.
\begin{enumerate}
\item Let $s\in\rr$ and $p,\ q\in(0,\fz]$.
The operator $A$ is called an \emph{inhomogeneous almost diagonal operator on $b^s_{p,q}$}
if there exist an $\ez\in(0,\fz)$ and an $\omega\in[\omega_0,\fz)$ satisfying \eqref{eq-doub} such that
\begin{equation}\label{eq-defiado}
K:=\sup_{Q,\ P\in\wD_0} \frac{|A_{Q,P}|}{\mathfrak M_{Q,P}(\ez)}<\fz,
\end{equation}
where, for any $Q,\ P\in\wD$, $\mathfrak M_{Q,P}(\ez)$ is defined as in \eqref{eq-defopq} with
$J:=\omega/\min\{1,p\}$.

\item Let $s\in\rr$,
$p\in(0,\fz)$, and $q\in(0,\fz]$.
The operator $A$ is called an \emph{inhomogeneous almost diagonal operator on $f^s_{p,q}$}
if there exist an $\ez\in(0,\fz)$ and an $\omega\in[\omega_0,\fz)$ satisfying \eqref{eq-doub}
such that \eqref{eq-defiado} holds true,
where, for any $Q,\ P\in\wD_0$, $\mathfrak M_{Q,P}(\ez)$ is as in \eqref{eq-defopq} with
$J:=\omega/\min\{1,p,q\}$.
\end{enumerate}
\end{definition}

Using an argument similar to that used in the proof of Theorem \ref{thm-ado}, we have the following boundedness
of inhomogeneous almost diagonal operators on $b^s_{p,q}$ and $f^s_{p,q}$; we omit the details here.
\begin{theorem}\label{thm-iado}
Let $s\in\rr$, $p\in(0,\fz]$ [resp., $p\in(0,\fz)$], $q\in(0,\fz]$, and $A:=\{A_{Q,P}\}_{Q,\ P\in\wD_0}$ be an
inhomogeneous almost diagonal operator on $b^s_{p,q}$ (resp., $f^s_{p,q}$). Then $A$ is bounded on $b^s_{p,q}$
(resp., $f^s_{p,q}$). Moreover, there exists a positive constant $C$,
independent of $A$, such that, for any $\lz\in b^s_{p,q}$
(resp., $\lz\in f^s_{p,q}$), $\|A\lz\|_{b^s_{p,q}}\le CK\|\lz\|_{b^s_{p,q}}$
(resp., $\|A\lz\|_{f^s_{p,q}}\le CK\|\lz\|_{f^s_{p,q}}$).
\end{theorem}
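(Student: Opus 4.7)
The plan is to mimic the proof of Theorem \ref{thm-ado} almost verbatim, with the role of the index set $\zz$ replaced by $\zz_+$ and the role of $\CG_k$ replaced by $\CH_k$. Since all the analytic ingredients used in the homogeneous case (Lemmas \ref{lem-pin}, \ref{lem-bbes}, \ref{lem-btl}, and \ref{lem-fs}) hold verbatim on $X$ without restriction on the range of the dyadic index, the passage from the two-sided to the one-sided summation is expected to be routine.

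More precisely, I would first decompose, for any $\lz:=\{\lz_P\}_{P\in\wD_0}$ and $Q\in\wD_0$,
\begin{equation*}
(A_0\lz)_Q:=\sum_{\{P\in\wD_0:\ \ell(P)\ge\ell(Q)\}} A_{Q,P}\lz_P
\qquad\textup{and}\qquad
(A_1\lz)_Q:=\sum_{\{P\in\wD_0:\ \ell(P)<\ell(Q)\}} A_{Q,P}\lz_P,
\end{equation*}
so that $A=A_0+A_1$, and then separately establish the boundedness of $A_0$ and $A_1$ on $b^s_{p,q}$ and on $f^s_{p,q}$. For each of these four boundedness claims, I would split into the subcases according to whether $p\in(1,\fz]$ or $p\in(0,1]$ (for Besov) and whether $\min\{p,q\}>1$ or $\min\{p,q\}\le 1$ (for Triebel--Lizorkin), yielding the same eight subcases as in the proof of Theorem \ref{thm-ado}. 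In each subcase I would insert the pointwise bound $|A_{Q,P}|\le K\mathfrak M_{Q,P}(\ez)$ coming from \eqref{eq-defiado}, unfold the product expression \eqref{eq-defopq} with the relevant choice of $J$, and reproduce the elementary algebraic identities $(k_0-k)[\bz+s-\omega(1/r-1)]>0$ (for the $A_0$ part) and $(k-k_0)(\gz-s)>0$ (for the $A_1$ part) which supply the geometric decay in $|k-k_0|$.

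From that point the estimation is identical to that in the proof of Theorem \ref{thm-ado}: I would use the H\"older inequality when the outer exponent exceeds $1$, Lemma \ref{lem-pin} when it does not, and Lemmas \ref{lem-bbes} or \ref{lem-btl} to handle the sum of the polynomial tails $P_{\ez+J-\omega}(x_Q,x_P;\cdot)$ against the normalized indicator functions $\wz{\mathbf 1}_{Q_\az^{k+1}}$; for the Triebel--Lizorkin part I would close the argument with Lemma \ref{lem-fs}. Since the sums now run over $k,k_0\in\zz_+$ rather than $\zz$, the geometric series obtained from the decay factors are in fact slightly better behaved, so every estimate carried out in Theorem \ref{thm-ado} transfers with at most cosmetic modifications. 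In particular, the $k=0$ term of the norm (involving $[\mu(Q_\az^0)]^{1-p/2}|\lz_\az^0|^p$) is already absorbed into the general expression by the convention $\CH_0:=\CA_0$.

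The only point worth pausing on is that the level $k=0$ of $b^s_{p,q}$ and $f^s_{p,q}$ is summed in the $\ell^p$ sense rather than in a $\dz^{-ksq}$-weighted $\ell^q$ sense, so when $A_0\lz$ or $A_1\lz$ is evaluated at a cube $Q\in\wD_0$ with $\ell(Q)=\dz$ one needs to check that the scale-zero contribution and the positive-scale contribution combine correctly under the H\"older / Lemma \ref{lem-pin} step. I expect this bookkeeping to be the main (very mild) obstacle, but it is resolved by noting that the expression \eqref{eq-defopq} still provides the factor $[\ell(Q)/\ell(P)]^s$ and the decay $\min\{[\ell(Q)/\ell(P)]^{\ez/2},[\ell(P)/\ell(Q)]^{\ez/2+J-\omega}\}$, which is all that the computation in Theorem \ref{thm-ado} ever uses. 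Once the eight subcases are handled in this way, combining them yields the desired inequalities $\|A\lz\|_{b^s_{p,q}}\le CK\|\lz\|_{b^s_{p,q}}$ and $\|A\lz\|_{f^s_{p,q}}\le CK\|\lz\|_{f^s_{p,q}}$, and hence completes the proof of Theorem \ref{thm-iado}.
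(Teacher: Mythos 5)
Your proposal is correct and is essentially the proof the paper intends: the paper itself omits the details of Theorem \ref{thm-iado}, stating only that it follows by an argument similar to that of Theorem \ref{thm-ado}, which is exactly the adaptation you carry out (same splitting into $A_0$ and $A_1$, same eight subcases, same use of Lemmas \ref{lem-pin}, \ref{lem-bbes}, \ref{lem-btl}, and \ref{lem-fs} with $\zz$ replaced by $\zz_+$). Your remark on the scale-zero term is the right observation, and it is indeed harmless since the $k=0$ block can be absorbed into the $\dz^{-ksq}$-weighted sum over $k\in\zz_+$ up to constants depending only on $q$.
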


Next, we state the molecular characterization of inhomogeneous Besov and Triebel--Lizorkin spaces. To
distinguish from the notion of molecules in Definition \ref{def-mol}, we first introduce the following notion
of local molecules.

\begin{definition}\label{def-imol}
Let $(\bz,\Gamma)\in(0,\fz)^2$ and $Q\in\wz{\mathcal D}_0$. A function $b_Q$ is called a
\emph{local molecule of type $(\bz,\Gamma)$} [for short, \emph{local $(\bz,\Gamma)$-molecule}]
centered at $Q$ if $b_Q$ satisfies the following conditions:
\begin{enumerate}
\item (the \emph{size condition}) for any $x\in X$, $|b_Q(x)|\le[\mu(Q)]^{1/2} P_{\Gamma}(y_Q,x;\ell(Q))$;
\item (the \emph{H\"older regularity condition}) for any $x,\ x'\in X$ with
$d(x,x')\le(2A_0)^{-1}[\ell(Q)+d(y_Q,x)]$,
$$
|b_Q(x)-b_Q(x')|\le[\mu(Q)]^{1/2}\lf[\frac{d(x,x')}{\ell(Q)+d(y_Q,x)}\r]^\bz
P_\Gamma(y_Q,x;\ell(Q));
$$
\item (the \emph{cancellation  condition}) $\int_X b_Q(x)\,d\mu(x)=0$ if $\ell(Q)\in(0,1)$.
\end{enumerate}
\end{definition}

We have the molecular characterization of inhomogeneous Besov and Triebel--Lizorkin
spaces as follows.

\begin{theorem}\label{thm-imol}
Let $\eta$ be the same as in Definition \ref{def-eti}, and $s$, $p$, $q$, $\bz$, and $\gz$ the same as in
Definition \ref{def-ibtl}(i) [resp., Definition \ref{def-ibtl}(ii)].
\begin{enumerate}
\item If $f\in(\go{\bz,\gz})'$, then there exist local $(\bz,\gz)$-molecules $\{b_Q\}_{Q\in\wD_0}$
 centered, respectively, at $\{Q\}_{Q\in\wD_0}$, and $\lz:=\{\lz_Q\}_{Q\in\wD_0}\in b^s_{p,q}$
(resp., $\lz:=\{\lz_Q\}_{Q\in\wD_0}\in f^s_{p,q}$)
such that $f=\sum_{Q\in\wD_0}\lz_Qb_Q$ in $(\go{\bz,\gz})'$ and $\|\lz\|_{b^s_{p,q}}\le C\|f\|_{B^s_{p,q}(X)}$
[resp., $\|\lz\|_{f^s_{p,q}}\le C\|f\|_{F^s_{p,q}(X)}$], where $C$ is a positive constant
independent of $f$.

\item Conversely, if $\{b_Q\}_{Q\in\wD_0}$ is a sequence of local $(\bz,\gz)$-molecules
centered, respectively, at $\{Q\}_{Q\in\wD_0}$, and $\lz:=\{\lz_Q\}_{Q\in\wD_0}\in b^s_{p,q}$
(resp., $\lz:=\{\lz_Q\}_{Q\in\wD_0}\in f^s_{p,q}$), then there
exists a $g\in(\go{\bz,\gz})'$ such that $g=\lz_Qb_Q$ in $(\go{\bz,\gz})'$,
$g\in B^s_{p,q}(X)$ [resp., $g\in F^s_{p,q}(X)$], and
$\|g\|_{B^s_{p,q}(X)}\le C\|\lz\|_{b^s_{p,q}}$
[resp., $\|g\|_{F^s_{p,q}(X)}\le C\|\lz\|_{f^s_{p,q}}$], where $C$ is a positive constant
independent of $\{b_Q\}_{Q\in\wD_0}$ and $\lz$.
\end{enumerate}
\end{theorem}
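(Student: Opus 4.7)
The plan is to mirror, in the inhomogeneous setting, the argument that gave Theorem \ref{thm-mol} in the homogeneous case, replacing the role of the homogeneous wavelet reproducing formula by the inhomogeneous expansion \eqref{eq-iwave} from Theorem \ref{thm-eieti}, the wavelet characterization by Theorem \ref{thm-w=q}, and the boundedness of homogeneous almost diagonal operators by the inhomogeneous analogue in Theorem \ref{thm-iado}.

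For part (i) I would simply take the molecules to be the wavelet system itself: set $\lz_{Q_\az^0}:=\lf<f,\phi_\az^0\r>$ for $\az\in\CA_0$ and $\lz_{Q_\az^k}:=\lf<f,\psi_\az^{k-1}\r>$ for $k\in\nn$ and $\az\in\CG_{k-1}$, and let $b_{Q_\az^0}:=\phi_\az^0$ and $b_{Q_\az^k}:=\psi_\az^{k-1}$. The size and H\"older regularity estimates in \cite[Theorem 6.1]{ah13} and Lemma \ref{lem-wave} give conditions (i) and (ii) of Definition \ref{def-imol} after the usual absorption of the exponential factor into the polynomial decay $P_\gz$; condition (iii) is required only when $\ell(Q)\in(0,1)$ and hence only for $\psi_\az^{k-1}$ with $k\in\nn$, for which it is built in. The identity $f=\sum_{Q\in\wD_0}\lz_Qb_Q$ in $(\go{\bz,\gz})'$ is then exactly \eqref{eq-iwave}, while the inequalities $\|\lz\|_{b^s_{p,q}}\ls\|f\|_{B^s_{p,q}(X)}$ and $\|\lz\|_{f^s_{p,q}}\ls\|f\|_{F^s_{p,q}(X)}$ are the necessity parts of Theorem \ref{thm-w=q}.

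For part (ii) the first task is to show that $g:=\sum_{Q\in\wD_0}\lz_Qb_Q$ converges in $(\go{\bz,\gz})'$. I would do this by testing against an arbitrary $\vz\in\go{\bz,\gz}$ with $\|\vz\|_{\go{\bz,\gz}}\le 1$, reducing $\sum_{Q}|\lz_Q||\lf<b_Q,\vz\r>|$ to the value at a single cube containing $x_0$ of an inhomogeneous almost diagonal matrix applied to $\{|\lz_Q|\}_Q$, and then invoking Theorem \ref{thm-iado}, exactly as in the derivation of \eqref{eq-dis}. Once $g$ is in hand, Theorem \ref{thm-w=q} allows me to control its $B^s_{p,q}(X)$- or $F^s_{p,q}(X)$-norm by the sequence norm of $\{\lf<g,\phi_\az^0\r>\}_{\az\in\CA_0}\cup\{\lf<g,\psi_\az^{k-1}\r>\}_{k\in\nn,\,\az\in\CG_{k-1}}$, which equals $A\lz$ for the matrix
\[
A_{Q,P}:=\begin{cases}\lf<b_P,\phi_\az^0\r> & \text{if }Q=Q_\az^0,\ \az\in\CA_0,\\ \lf<b_P,\psi_\az^{k-1}\r> & \text{if }Q=Q_\az^k,\ k\in\nn,\ \az\in\CG_{k-1}.\end{cases}
\]
By Theorem \ref{thm-iado} the proof then reduces to checking that $A$ is an inhomogeneous almost diagonal operator in the sense of Definition \ref{def-iado}.

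The technical heart of the argument will be the pointwise bound on $A_{Q,P}$, which I expect to split into the two cases $\ell(Q)\ge\ell(P)$ and $\ell(Q)<\ell(P)$, following the computation that produced \eqref{eq-lQ>lP} and \eqref{eq-lQ<lP} and exploiting the cancellation of $b_P$ in the first case and of $\psi_Q$ in the second. The genuine new difficulty, compared with the homogeneous proof, is the boundary scale $\ell=1$: the scaling function $\phi_\az^0$ has no vanishing integral, and condition (iii) of Definition \ref{def-imol} allows $b_P$ with $\ell(P)=1$ to fail the cancellation condition as well. I would handle this by dropping the cancellation step whenever $\ell(Q)=1$ or $\ell(P)=1$ and estimating $|\lf<b_P,\phi_\az^0\r>|$ directly from the size estimates together with the decay of $P_\gz(x_Q,x_P;1)$; since in this regime all relevant side-lengths are $\sim 1$, the resulting bound is easily subsumed by \eqref{eq-defopq} for any choice of $\ez>0$ and $s$. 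Splicing the three estimates then shows that $A$ satisfies \eqref{eq-defiado}, and Theorem \ref{thm-iado} completes the proof.
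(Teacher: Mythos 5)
Your overall architecture is exactly the route the paper has in mind (it omits the details, saying the proofs are similar to the homogeneous case): for part (i), the scaling functions and wavelets serve as local molecules, the expansion is \eqref{eq-iwave}, and the coefficient bound is the necessity half of Theorem \ref{thm-w=q}; for part (ii), convergence via an analogue of \eqref{eq-dis}, then Theorem \ref{thm-w=q} plus almost diagonality of $A_{Q,P}$ and Theorem \ref{thm-iado}, mirroring Propositions \ref{prop-bB} and \ref{prop-fF}. The gap is in your treatment of the boundary scale. The regime ``$\ell(Q)=1$ or $\ell(P)=1$'' does not force $\ell(Q)\sim\ell(P)$: the delicate sub-case is $Q=Q_\az^0$ (analyzing function $\phi_\az^0$, no cancellation) paired with $P\in\wD_0$ of side length $\ell(P)=\dz^k$ arbitrarily small. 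There \eqref{eq-defopq} demands $|A_{Q,P}|\ls \dz^{k(\ez/2+J-\omega-s)}\,[\mu(Q)\mu(P)]^{1/2}P_{\ez+J-\omega}(x_Q,x_P;1)$, and since Definition \ref{def-ibtl} admits $s\le 0$ (and, when $\min\{1,p,q\}<1$, even some positive $s\le J-\omega$), the exponent $\ez/2+J-\omega-s$ is strictly positive for \emph{every} $\ez>0$, so genuine decay in $k$ is required. A size-only estimate of $|\lf<b_P,\phi_\az^0\r>|$ yields $[\mu(Q)\mu(P)]^{1/2}P_\gz(x_Q,x_P;1)$ with no such factor, so your claim that it is ``subsumed by \eqref{eq-defopq} for any choice of $\ez>0$ and $s$'' is false, and the verification of \eqref{eq-defiado} breaks down precisely at this step.

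The repair shows you should not discard cancellation there: when $\ell(P)<1$, Definition \ref{def-imol}(iii) does give $\int_X b_P\,d\mu=0$, and the Case-1 computation leading to \eqref{eq-lQ>lP} uses only this cancellation together with the size and H\"older regularity of the analyzing function, which $\phi_\az^0$ possesses by \cite[Theorem 6.1]{ah13}; this produces the factor $[\ell(P)/\ell(Q)]^\gz=\dz^{k\gz}$, which is admissible because $s+\gz-\omega(1/p-1)_+>0$, exactly as in the choice of $\ez$ at the end of the proof of Proposition \ref{prop-bB}. When $\ell(Q)<\ell(P)=1$, the Case-2 computation needs only the cancellation of $\psi_Q$ (available since $\ell(Q)<1$) and the H\"older regularity of $b_P$ (always assumed), so nothing changes. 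The only configuration in which both cancellations are missing is $\ell(Q)=\ell(P)=1$, and precisely there the minimum in \eqref{eq-defopq} equals $1$, so the size estimate alone suffices. The same caution applies to your convergence step: in the analogue of \eqref{eq-dis}, the test functions in $\go{\bz,\gz}$ have no cancellation, so the decay in $\ell(P)$ for small cubes must again come from the cancellation of $b_P$, not from size. With these corrections your argument goes through and coincides with the intended proof.
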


\begin{remark}
If we replace $\ell(Q)\in(0,1)$ in Definition \ref{def-imol}(iii) by $\ell(Q)\in(0,R)$ for any fixed
$R\in(0,1]$, then Theorem \ref{thm-imol} also holds true in this case; we omit the details here.
\end{remark}

Finally, we concern about the Lusin area function and the Littlewood--Paley $g_\lz^*$-function characterizations
of inhomogeneous Triebel--Lizorkin spaces. To this end, we first introduce the following inhomogeneous
Littlewood--Paley functions.

\begin{definition}\label{def-inlp}
Let $s\in(-\eta,\eta)$ with $\eta$ as in Definition \ref{def-eti}, $q\in(0,\fz]$,
$\bz,\ \gz\in(0,\eta)$, and $f\in(\go{\bz,\gz})'$. The \emph{inhomogeneous Littlewood--Paley
$g$-function $g^s_q(f)$ of $f$} is defined by
setting, for any $x\in X$,
$$
g^s_q(f)(x):=\lf\{\sum_{k=0}^N\sum_{\az\in\CA_k}\sum_{m=1}^{N(k,\az)}m_{Q_\az^{k,m}}(|Q_kf|^q)
\mathbf 1_{Q_\az^{k,m}}(x)+\sum_{k=N+1}^\fz\dz^{-ksq}|Q_kf(x)|^q\r\}^{1/q},
$$
where $N$ is as in Lemma \ref{lem-idrf} and $m_Q$, for any given dyadic cube $Q$, as in \eqref{eq-defmean},
the \emph{inhomogeneous Lusin area function $\CS^s_q(f)$ of $f$} is defined by setting, for any $x\in X$,
$$
\CS^s_q(f)(x):=\lf[\sum_{k=0}^\fz\dz^{-ksq}\frac 1{V_{\dz^k}(x)}\int_{B(x,\dz^k)}|Q_kf(y)|^q
\,d\mu(y)\r]^{1/q},
$$
and, for any given $\lz\in(0,\fz)$, the \emph{inhomogeneous Littlewood--Paley $g_\lz^*$-function $(g_\lz^*)^s_q(f)$
of $f$} is defined by setting, for any $x\in X$,
$$
\lf(g_\lz^*\r)^s_q(f)(x):=\lf\{\sum_{k=0}^\fz\dz^{-ksq}\int_X |Q_kf(y)|^q\lf[\frac{\dz^k}{\dz^k+d(x,y)}\r]^\lz
\,\frac{d\mu(y)}{V_{\dz^k}(x)+V_{\dz^k}(y)}\r\}^{1/q}.
$$
\end{definition}

Obviously, for any given $s$, $p$, $q$, $\bz$, and $\gz$ as in Definition \ref{def-ibtl}(ii),
and $f\in(\go{\bz,\gz})'$, it holds true that $\|g^s_q(f)\|_{L^p(X)}=\|f\|_{F^s_{p,q}(X)}$. The next two
theorems give the Lusin area function and the Littlewood--Paley $g_\lz^*$-function characterizations of
inhomogeneous Triebel--Lizorkin spaces, respectively. Since their proofs are similar, respectively,
to those of Theorems \ref{thm-g=s} and \ref{thm-glstar}, we omit the details here.

\begin{theorem}\label{thm-ig=s}
Let $s$, $p$, $q$, $\bz$, and $\gz$ be as in Definition \ref{def-ibtl}(ii). Then $f\in F^s_{p,q}(X)$ if and
only if $f\in(\go{\bz,\gz})'$ and $\CS^s_q(f)\in L^p(X)$. Moreover, there exists a constant $C\in[1,\fz)$,
independent of $f$, such that $C^{-1}\|\CS^s_q(f)\|_{L^p(X)}\le\|f\|_{F^s_{p,q}(X)}\le C\|\CS^s_q(f)\|_{L^p(X)}$.
\end{theorem}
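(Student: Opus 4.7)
The plan is to mirror the proof of Theorem \ref{thm-g=s}, adapting both directions to the inhomogeneous framework in which the $F^s_{p,q}(X)$-norm splits into a low-frequency part (the finite sum over $k\in\{0,\ldots,N\}$ involving the means $m_{Q_\az^{k,m}}(|Q_kf|)$) and a high-frequency part (the sum over $k\ge N+1$ with pointwise values $|Q_kf|$). The Lusin area function $\CS^s_q(f)$ splits naturally in the same way, since for $k\in\{0,\ldots,N\}$ the weight $\dz^{-ksq}$ is a fixed bounded constant and the integrals are on balls of bounded radius.

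For sufficiency, assume $f\in(\go{\bz,\gz})'$ with $\CS^s_q(f)\in L^p(X)$. For the high-frequency tail $k\ge N+1$, I would argue exactly as in Theorem \ref{thm-g=s}: combine the pointwise bound $|Q_kf|^q\le\sum_{\az,m}\sup_{z\in Q_\az^{k,m}}|Q_kf(z)|^q\mathbf 1_{Q_\az^{k,m}}$ with the Plancherel--P\^olya inequality (Lemma \ref{lem-ppi} restricted to $k\ge N+1$, which remains valid for the $\exp$-IATI supplied by Theorem \ref{thm-eieti}), and then dominate the infimum side by the $L^q$-average over $B(x,\dz^k)$, which is a piece of $[\CS^s_q(f)(x)]^q$. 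For the low-frequency piece $k\in\{0,\ldots,N\}$, first bound $m_{Q_\az^{k,m}}(|Q_kf|)\le\sup_{Q_\az^{k,m}}|Q_kf|$, apply the same Plancherel--P\^olya inequality at each (finitely many) scale, and use $Q_\az^{k,m}\subset B(y,\dz^k)$ together with $\mu(Q_\az^{k,m})\sim V_{\dz^k}(y)$ for $y\in Q_\az^{k,m}$ to obtain
$$\sum_{\az,m}\mu\lf(Q_\az^{k,m}\r)\lf[m_{Q_\az^{k,m}}(|Q_kf|)\r]^p\ls\lf\|\CS^s_q(f)\r\|_{L^p(X)}^p.$$
Summing over the finitely many $k\in\{0,\ldots,N\}$ then completes the sufficiency.

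For necessity, assume $f\in F^s_{p,q}(X)$ and apply Theorem \ref{thm-imol}(i) to decompose $f=\sum_{Q\in\wz\CD_0}\lz_Q b_Q$ in $(\go{\bz,\gz})'$ with local $(\bz,\gz)$-molecules $\{b_Q\}_{Q\in\wz\CD_0}$ and $\|\lz\|_{f^s_{p,q}}\ls\|f\|_{F^s_{p,q}(X)}$. By a variant of \cite[Lemma 3.9]{whhy20} adapted to local molecules, for any fixed $\bz'\in(0,\bz\wedge\gz)$, any $k\in\zz_+$, any $Q=Q_\az^{k_0}\in\wz\CD_0$, any $x\in X$, and any $y\in B(x,\dz^k)$,
$$\lf|Q_k b_Q(y)\r|\ls\dz^{|k-k_0|\bz'}\lf[\mu(Q)\r]^{-1/2}P_\gz\lf(x,x_Q;\dz^{k\wedge k_0}\r).$$
Plugging this into the definition of $\CS^s_q(f)$, applying Lemma \ref{lem-btl} to bound the $\az$-sum at each scale by a Hardy--Littlewood maximal function of the sequence, and finally invoking the Fefferman--Stein vector-valued maximal inequality (Lemma \ref{lem-fs}), I would conclude $\|\CS^s_q(f)\|_{L^p(X)}\ls\|\lz\|_{f^s_{p,q}}\ls\|f\|_{F^s_{p,q}(X)}$; the two regimes $\min\{p,q\}>1$ and $\min\{p,q\}\le 1$ are handled separately, the latter via an auxiliary exponent $r\in(p(s,\bz\wedge\gz),\min\{p,q\})$ in Lemma \ref{lem-btl}, exactly as in the necessity half of Theorem \ref{thm-g=s}.

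The principal technical subtlety concerns the single pair $(k,k_0)=(0,0)$, where neither $Q_0$ nor the local molecule $b_Q$ carries any cancellation (Definition \ref{def-imol}(iii) is vacuous when $\ell(Q)=1$, and $Q_0$ reproduces constants by Definition \ref{def-ieti}(iii)). At this trivial scale the decay factor $\dz^{|k-k_0|\bz'}=1$ is harmless, and the bound follows directly from the size conditions alone, with the $[\mu(Q)]^{-1/2}P_\gz$-factor recovered by integrating the exponential decay of $Q_0$ against the polynomial tail of $b_Q$. In all other corner cases ($k\ge 1$, $k_0=0$ or $k=0$, $k_0\ge 1$) the requisite decay is extracted from whichever of the two factors does possess the appropriate cancellation, so no single-side cancellation failure ever obstructs the global estimate.
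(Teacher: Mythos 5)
Your proposal is correct and follows essentially the route the paper intends: the paper omits the proof of Theorem \ref{thm-ig=s} precisely because it is the inhomogeneous adaptation of Theorem \ref{thm-g=s}, namely a Plancherel--P\^olya sup/inf comparison over the dyadic cubes for the sufficiency and the molecular decomposition (here Theorem \ref{thm-imol}) combined with almost-orthogonality estimates, Lemma \ref{lem-btl}, and Lemma \ref{lem-fs} for the necessity, with exactly the corner-case bookkeeping you describe for the non-cancellative scale $k=0$ and the molecules with $\ell(Q)=1$. The only point to adjust is the citation: the sup/inf step should invoke the inhomogeneous Plancherel--P\^olya inequality for $\exp$-IATIs from \cite{whhy20} (which simultaneously controls the mean-value terms at scales $k\le N$ and the pointwise terms at scales $k\ge N+1$), rather than a scale-by-scale ``restriction'' of the homogeneous Lemma \ref{lem-ppi}.
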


\begin{theorem}\label{thm-iglstar}
Let $s$, $p$, $q$, $\bz$, and $\gz$ be as in Definition \ref{def-ibtl}(ii), and $q\in(p(s,\bz\wedge\gz),\fz)$. Suppose
$\lz\in(\max\{\omega_0,q\omega_0/p\},\fz)$ with $\omega_0$ as in \eqref{eq-updim}.
Then $f\in F^s_{p,q}(X)$ if and only if $f\in(\go{\bz,\gz})'$ and
$(g_\lz^*)^s_q(f)\in L^p(X)$. Moreover, there exists a constant $C\in[1,\fz)$,
independent of $f$, such that
$C^{-1}\|(g_\lz^*)^s_q(f)\|_{L^p(X)}\le \|f\|_{F^s_{p,q}(X)}
\le C\|(g_\lz^*)^s_q(f)\|_{L^p(X)}$.
\end{theorem}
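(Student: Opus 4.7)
The plan is to mirror the proof of Theorem \ref{thm-glstar} in the homogeneous case, which was obtained by combining Propositions \ref{prop-glp>q} and \ref{prop-glp<q}. I will split the argument into sufficiency and necessity, with the necessity further decomposed into the subcases $q\in(p(s,\bz\wedge\gz),p]$ and $q\in(p,\fz)$. Throughout, the required geometric ingredients---change-of-angle arguments, Tonelli/duality manipulations, maximal function estimates, Plancherel--P\^olya type comparisons---should transfer from the homogeneous setting, because they rely only on the geometry of balls in $X$ and on Lemmas \ref{lem-cube} through \ref{lem-fs}, all of which remain available here.

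For sufficiency, I will use the pointwise bound $\CS^s_q(f)(x)\ls(g_\lz^*)^s_q(f)(x)$, which follows immediately from $[\dz^k/(\dz^k+d(x,y))]^\lz\sim 1$ and $V_{\dz^k}(x)+V_{\dz^k}(y)\sim V_{\dz^k}(x)$ whenever $y\in B(x,\dz^k)$; Theorem \ref{thm-ig=s} then yields $\|f\|_{F^s_{p,q}(X)}\ls\|(g_\lz^*)^s_q(f)\|_{L^p(X)}$. For the subcase $q\in(p(s,\bz\wedge\gz),p]$ of the necessity, I imitate Proposition \ref{prop-glp>q}. When $p=q$, the Tonelli theorem together with $\lz>\omega$ and the standard estimate
$\int_X[\dz^k/(\dz^k+d(x,y))]^{\lz-\omega}[V_{\dz^k}(x)+V(x,y)]^{-1}\,d\mu(x)\ls 1$
(from \cite[Proposition 2.2(ii)]{hlyy19}) reduces $\|(g_\lz^*)^s_p(f)\|_{L^p(X)}^p$ to $\sum_{k=0}^\fz\dz^{-ksp}\|Q_kf\|_{L^p(X)}^p$, whose tail $k\ge N+1$ is $\|f\|_{F^s_{p,p}(X)}^p$ by definition and whose finite initial segment $k\in\{0,\ldots,N\}$ is controlled by Lemma \ref{lem-ppi}. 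When $q<p$, I will use the duality $(L^{p/q}(X))'=L^{(p/q)'}(X)$ and pair $[(g_\lz^*)^s_q(f)]^q$ against any non-negative $\vz\in L^{(p/q)'}(X)$; the same kernel estimate together with the boundedness of $\CM$ on $L^{(p/q)'}(X)$ produce the bound $\ls\|g^s_q(f)\|_{L^p(X)}^q\|\vz\|_{L^{(p/q)'}(X)}$, and taking the supremum over admissible $\vz$ closes the case.

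For the subcase $q\in(p,\fz)$, I introduce the inhomogeneous auxiliary Lusin function with aperture $\thz\in[1,\fz)$,
\begin{equation*}
\CS^{s,(1)}_{q,\thz}(f)(x):=\lf[\sum_{k=0}^\fz\dz^{-ksq}\int_{B(x,\thz\dz^k)}|Q_kf(y)|^q\,\frac{d\mu(y)}{V_{\dz^k}(y)}\r]^{1/q},
\end{equation*}
and prove the change-of-angle inequality $\|\CS^{s,(1)}_{q,\thz}(f)\|_{L^p(X)}\ls\thz^{\omega/p}\|\CS^{s,(1)}_{q,1}(f)\|_{L^p(X)}$ by running the good-$\lz$ level-set argument of Proposition \ref{prop-angle} verbatim; that argument uses only the weak-$(1,1)$ bound for the aperture-shifted maximal function and the lower bound $\mu(E_t^\complement\cap B(y,\dz^k))\ge\frac12\mu(B(y,\dz^k))$ on ``good'' points. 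Splitting the integration region in $(g_\lz^*)^s_q(f)$ along the annuli $\{2^{j-1}\dz^k\le d(x,y)<2^j\dz^k\}_{j\in\nn}$ together with the central ball yields the pointwise bound
\begin{equation*}
[(g_\lz^*)^s_q(f)(x)]^q\ls\sum_{j=0}^\fz 2^{-j\lz}[\CS^{s,(1)}_{q,2^j}(f)(x)]^q.
\end{equation*}
Since $p/q<1$, the quasi-triangle inequality in $L^{p/q}(X)$, the change-of-angle estimate, and the equivalence $\CS^{s,(1)}_{q,1}(f)\sim\CS^s_q(f)$ combine to give $\|(g_\lz^*)^s_q(f)\|_{L^p(X)}^p\ls\sum_{j=0}^\fz 2^{-j(\lz p/q-\omega)}\|\CS^s_q(f)\|_{L^p(X)}^p$, where the geometric series converges precisely because $\lz>q\omega/p$. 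A final application of Theorem \ref{thm-ig=s} then finishes the argument.

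The main obstacle I anticipate is verifying that the inhomogeneous change-of-angle formula genuinely goes through unchanged despite the hybrid structure of $(g_\lz^*)^s_q(f)$, which sums over all $k\ge 0$, and despite the fact that $Q_0$ in an exp-IATI preserves constants rather than annihilating them. However, the crucial steps in Proposition \ref{prop-angle}---the lower bound on good balls and the weak-$(1,1)$ estimate for the shifted maximal function---are purely geometric, so the absence of cancellation for $Q_0$ never enters; the transfer should thus be routine, with attention needed only in verifying the pointwise comparability $\CS^{s,(1)}_{q,1}(f)\sim\CS^s_q(f)$ at the $k=0$ level, which follows directly from the definitions once one observes that the two integrands involve the comparable weights $1/V_{\dz^k}(x)$ and $1/V_{\dz^k}(y)$ on $y\in B(x,\dz^k)$.
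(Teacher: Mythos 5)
Your proposal follows exactly the route the paper intends: the authors omit the proof of Theorem \ref{thm-iglstar}, stating it is similar to those of Theorems \ref{thm-g=s} and \ref{thm-glstar}, which is precisely the adaptation of Propositions \ref{prop-glp>q}, \ref{prop-angle}, and \ref{prop-glp<q} (sufficiency via $\CS^s_q\ls (g^*_\lambda)^s_q$ and Theorem \ref{thm-ig=s}; necessity via Tonelli/duality for $q\le p$ and via the change-of-angle estimate for $q>p$) that you carry out. The only details to adjust are that, for the finitely many low-frequency scales $k\in\{0,\ldots,N\}$, the comparison of pointwise values of $Q_kf$ with the averages $m_{Q_\az^{k,m}}(|Q_kf|)$ appearing in $\|f\|_{F^s_{p,q}(X)}$ must come from the inhomogeneous Plancherel--P\^olya inequality (obtained from Lemma \ref{lem-idrf}, as in \cite{whhy20}) rather than from the homogeneous Lemma \ref{lem-ppi} you cite, and that $\omega\in[\omega_0,\fz)$ satisfying \eqref{eq-doub} should be fixed close enough to $\omega_0$ so that $\lambda>\max\{\omega,q\omega/p\}$.
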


\paragraph{Acknowledgments.} The authors would like to thank the referees for their
many valuable suggestions which indeed improve the quality of this article.

\bigskip

\noindent Ziyi He, Fan Wang, Dachun Yang (Corresponding author) and Wen Yuan

\medskip

\noindent Laboratory of Mathematics and Complex Systems (Ministry of Education of China),
School of Mathematical Sciences, Beijing Normal University, Beijing 100875, People's Republic of China

\smallskip

\noindent{\it E-mails:} \texttt{ziyihe@mail.bnu.edu.cn} (Z. He)

\noindent\phantom{{\it E-mails:} }\texttt{fanwang@mail.bnu.edu.cn} (F. Wang)

\noindent\phantom{{\it E-mails:} }\texttt{dcyang@bnu.edu.cn} (D. Yang)

\noindent\phantom{{\it E-mails:} }\texttt{wenyuan@bnu.edu.cn} (W. Yuan)

\end{document}